\def\DateTime{29/April/2010, 11:30 (Kyoto)}
\def\Version{Version $3.0$}
\def\yes{\if00}
\def\no{\if01}
\def\iftenpt{\no}
\def\ifelevenpt{\yes}
\def\iftwelvept{\no}
\def\ifusepdf{\no}
\def\ifpsfont{\yes}
\def\ifquery{\yes}
\theoremstyle{plain}
\newtheorem{Theorem}{Theorem}[section]
\newtheorem{Proposition}[Theorem]{Proposition}
\newtheorem{Lemma}[Theorem]{Lemma}
\newtheorem{Corollary}[Theorem]{Corollary}
\newtheorem{Claim}{Claim}[Theorem]
\newtheorem{Main}{Main Results}
\theoremstyle{definition}
\newtheorem{Remark}[Theorem]{Remark}
\newtheorem{Convention}{}[section]
\def\rom{\textup}
\newcommand{\ZZ}{{\mathbb{Z}}}
\newcommand{\QQ}{{\mathbb{Q}}}
\newcommand{\RR}{{\mathbb{R}}}
\newcommand{\CC}{{\mathbb{C}}}
\newcommand{\PP}{{\mathbb{P}}}
\newcommand{\OO}{{\mathcal{O}}}
\newcommand{\Proj}{\operatorname{Proj}}
\newcommand{\Rat}{\operatorname{Rat}}
\newcommand{\Sing}{\operatorname{Sing}}
\newcommand{\Supp}{\operatorname{Supp}}
\newcommand{\mult}{\operatorname{mult}}
\newcommand{\adeg}{\widehat{\operatorname{deg}}}
\newcommand{\Bs}{\operatorname{Bs}}
\newcommand{\dist}{\operatorname{dist}}
\newcommand{\Div}{\operatorname{Div}}
\newcommand{\avol}{\widehat{\operatorname{vol}}}
\newcommand{\aH}{\hat{H}^0}
\newcommand{\ah}{\hat{h}^0}
\newcommand{\Tpsh}{\operatorname{PSH}}
\newcommand{\Conv}{\operatorname{Conv}}
\newcommand{\rest}[2]{\left.{#1}\right\vert_{{#2}}}
\def\query#1{\setlength\marginparwidth{80pt}%
\marginpar{\raggedright\fontsize{7.81}{9} 
\selectfont\upshape\hrule\smallskip 
#1\par\smallskip\hrule}} 
\def\query#1{}
\begin{document}

\title[Big arithmetic divisors on the projective spaces over $\ZZ$]%
{Big arithmetic divisors on the projective spaces over $\ZZ$}
\author{Atsushi Moriwaki}
\address{Department of Mathematics, Faculty of Science,
Kyoto University, Kyoto, 606-8502, Japan}
\email{moriwaki@math.kyoto-u.ac.jp}
\date{\DateTime, (\Version)}
\subjclass{Primary 14G40; Secondary 11G50}


\maketitle

\setcounter{tocdepth}{1}
\tableofcontents

\renewcommand{\theTheorem}{\Alph{Theorem}}
\section*{Introduction}
Let $\PP^n_{\ZZ} = \Proj(\ZZ[T_0, T_1, \ldots, T_n])$, $H_i = \{ T_i = 0 \}$ and $z_i = T_i/T_0$ for $i=0, 1, \ldots, n$.
Let us fix a sequence $\pmb{a} = (a_0, a_1, \ldots, a_n)$ of positive numbers. We define a $H_0$-Green function $g_{\pmb{a}}$ of ($C^{\infty} \cap \Tpsh$)-type 
on $\PP^n(\CC)$ and
an arithmetic divisor $\overline{D}_{\pmb{a}}$ of $(C^{\infty} \cap \Tpsh)$-type on $\PP^n_{\ZZ}$
to be
\[
g_{\pmb{a}} := \log (a_0 + a_1 \vert z_1 \vert^2 + \cdots + a_n \vert z_n \vert^2)\quad\text{and}\quad
\overline{D}_{\pmb{a}} := (H_0, g_{\pmb{a}}).
\]
In this paper, we will observe several properties of $\overline{D}_{\pmb{a}}$ and 
give the exact form of the Zariski decomposition of $\overline{D}_{\pmb{a}}$ on $\PP^1_{\ZZ}$.
Further, we will show that, if $n\geq 2$ and $\overline{D}_{\pmb{a}}$ is big and not nef,
then, for any birational morphism
$f: X \to \PP^n_{\ZZ}$ of projective, generically smooth and normal arithmetic varieties,
we can not expect a suitable Zariski decomposition of $f^*(\overline{D}_{\pmb{a}})$.
In this sense, the results in \cite{MoArZariski} are nothing short of miraculous, and
arithmetic linear series are very complicated and have richer structure than what we expected.
We also give a concrete construction of Fujita's approximation of $\overline{D}_{\pmb{a}}$.
The following is a list of the main results of this paper.

\renewcommand{\theMain}{\!}
\begin{Main}
Let $\varphi_{\pmb{a}} : \RR_{\geq 0}^{n+1} \to \RR$ be a function given by
\[
\varphi_{\pmb{a}}(x_0, x_1, \ldots, x_n) := - \sum_{i=0}^n x_i \log x_i + \sum_{i=0}^n x_i \log a_i,
\]
and let 
\[
\Theta_{\pmb{a}} := \left\{ (x_1, \ldots, x_n) \in \Delta_n  \mid 
\varphi_{\pmb{a}}(1-x_1-\cdots-x_n,
x_1, \ldots, x_n) \geq 0 \right\},
\]
where $\Delta_n := \left\{ (x_1, \ldots, x_n) \in \RR_{\geq 0}^n \mid x_1 + \cdots + x_n \leq 1 \right\}$.
Then the following properties hold for $\overline{D}_{\pmb{a}}$:

\begin{enumerate}
\renewcommand{\labelenumi}{(\arabic{enumi})}
\item
$\overline{D}_{\pmb{a}}$ is ample if and only if $a_0 > 1, a_1 > 1, \ldots, a_n > 1$.

\item
$\overline{D}_{\pmb{a}}$ is nef if and only if $a_0 \geq 1, a_1 \geq 1, \ldots, a_n \geq 1$.

\item
$\overline{D}_{\pmb{a}}$ is big if and only if $a_0 + a_1 + \cdots + a_n > 1$.

\item
$\overline{D}_{\pmb{a}}$ is pseudo-effective if and only if $a_0 + a_1 + \cdots + a_n \geq 1$.

{\nopagebreak
\begin{figure}[h]
\begin{center}
\unitlength=0.5mm
\begin{picture}(110,110)
\put(-5,0){\vector(1,0){100}}
\put(0,-5){\vector(0,1){100}}
\put(0,50){\line(1,-1){50}}
\put(50,50){\line(1,0){50}}
\put(50,50){\line(0,1){50}}
\put(97,-1){$a_0$}
\put(-1,97){$a_1$}
\put(13,15){\text{\tiny \rom{Not}}}
\put(2,10){\text{\tiny \rom{Pseudo-effective}}}
\put(70,70){\text{\tiny \rom{Ample}}}
\put(35,35){\text{\tiny \rom{Big}}}
\put(80,40){\vector(0,1){10}}
\put(60,35){\text{\tiny \rom{Nef on the boundary}}}
\put(40,20){\vector(-1,0){10}}
\put(41,18.5){\text{\tiny \rom{Pseudo-effective on the boundary}}}
\put(50,0){\circle*{2}}
\put(0,50){\circle*{2}}
\put(50,50){\circle*{2}}
\put(45,-5){\tiny \rom{(1,0)}}
\put(-11,47){\tiny \rom{(0,1)}}
\put(45,45){\tiny \rom{(1,1)}}
\end{picture}
\caption{Geography of $\overline{D}_{\pmb{a}}$ on $\PP^1_{\ZZ}$}
\end{center}
\end{figure}}

\item
$\aH(\PP^n_{\ZZ}, l \overline{D}_{\pmb{a}}) \not= \{ 0 \}$ if and only if
$l \Theta_{\pmb{a}} \cap \ZZ^n \not= \emptyset$. As consequences, we have the following:

\begin{enumerate}
\renewcommand{\labelenumii}{(\arabic{enumi}.\arabic{enumii})}
\item
We assume that $a_0 + a_1 + \cdots + a_n = 1$. For a positive integer $l$,
\[
\aH(\PP^n_{\ZZ}, l \overline{D}_{\pmb{a}}) = \begin{cases}
\{ 0, \pm z_1^{la_1} \cdots z_n^{la_n} \} & \text{if $la_1, \ldots, la_n \in \ZZ$},\\
\{ 0 \} & \text{otherwise}.
\end{cases}
\]
In particular, if $\pmb{a} \not\in \QQ^{n+1}$, then $\aH(\PP^n_{\ZZ}, l \overline{D}_{\pmb{a}}) = \{ 0 \}$ for all $l \geq 1$.

\item
For any positive integer $l$,
there exists $\pmb{a} \in \QQ_{>0}^{n+1}$ such that
$\overline{D}_{\pmb{a}}$ is big and 
\[
\aH(\PP^n_{\ZZ}, k \overline{D}_{\pmb{a}} ) = \{ 0 \}
\]
for all $k$ with
$1 \leq k \leq l$.
\end{enumerate}

\item
${\displaystyle \left\langle \aH(\PP^n_{\ZZ}, l \overline{D}_{\pmb{a}}) \right\rangle_{\ZZ} = \bigoplus_{(e_1, \ldots, e_n) \in l \Theta_{\pmb{a}} \cap \ZZ^{n}} \ZZ z_1^{e_1} \cdots z_n^{e_n}}$
if $l \Theta_{\pmb{a}} \cap \ZZ^n \not= \emptyset$.

\item
\rom{(}Integral formula\rom{)}\quad The following formulae hold:
\begin{align*}
\hspace{4em}&\avol(\overline{D}_{\pmb{a}}) = \frac{(n+1)!}{2} \int_{\Theta_{\pmb{a}}} \varphi_{\pmb{a}}(1-x_1 - \cdots - x_n, x_1, \ldots, x_n) 
dx_1 \cdots dx_n, \\
\intertext{and}
& \adeg(\overline{D}_{\pmb{a}}^{n+1}) = \frac{(n+1)!}{2} \int_{\Delta_n} \varphi_{\pmb{a}}(1-x_1 - \cdots - x_n, x_1, \ldots, x_n) 
dx_1 \cdots dx_n.
\end{align*}
In particular, $\adeg(\overline{D}_{\pmb{a}}^{n+1}) = \avol(\overline{D}_{\pmb{a}})$ if and only if
$\overline{D}_{\pmb{a}}$ is nef.

\item
\rom{(}Zariski decomposition for $n=1$\rom{)}\quad
We assume $n=1$.
The Zariski decomposition of $\overline{D}_{\pmb{a}}$ exists if and only if $a_0 + a_1 \geq 1$.
Moreover, the positive part of $\overline{D}_{\pmb{a}}$ is given by $(\theta_{\pmb{a}}H_0 - \vartheta_{\pmb{a}}H_{1}, p_{\pmb{a}})$,
where $\vartheta_{\pmb{a}} = \inf \Theta_{\pmb{a}}$, $\theta_{\pmb{a}} = \sup \Theta_{\pmb{a}}$ and
\[
\hspace{4em}
p_{\pmb{a}}(z_1) =\begin{cases}
\vartheta_{\pmb{a}} \log \vert z_1 \vert^2 & \text{if $\vert z_1 \vert < \sqrt{\frac{a_0\vartheta_{\pmb{a}}}{a_1(1-\vartheta_{\pmb{a}})}}$}, \\
\log (a_0   + a_1\vert z_1 \vert^2) & 
\text{if $\sqrt{\frac{a_0\vartheta_{\pmb{a}}}{a_1(1-\vartheta_{\pmb{a}})}} \leq \vert z_1 \vert \leq \sqrt{\frac{a_0\theta_{\pmb{a}}}{a_1(1-\theta_{\pmb{a}})}}$}, \\
\theta_{\pmb{a}} \log \vert z_1 \vert^2 & \text{if $\vert z_1 \vert > \sqrt{\frac{a_0\theta_{\pmb{a}}}{a_1(1-\theta_{\pmb{a}})}}$},
\end{cases}
\]
In particular, if $a_0 + a_1 = 1$, then the positive part is $-a_1\widehat{(z_1)}$.

\item
\rom{(}Impossibility of Zariski decomposition for $n \geq 2$\rom{)}\quad
We assume $n \geq 2$. If $\overline{D}_{\pmb{a}}$ is big and not nef 
\rom{(}i.e., $a_0 + \cdots + a_n > 1$ and
$a_i < 1$ for some $i$\rom{)},
then, for any birational morphism $f : X \to \PP^n_{\ZZ}$ of projective, generically smooth and normal arithmetic varieties,
there is no decomposition $f^*(\overline{D}_{\pmb{a}}) = \overline{P} + \overline{N}$ with the following properties:
\begin{enumerate}
\renewcommand{\labelenumii}{(\arabic{enumi}.\arabic{enumii})}
\item
$\overline{P}$ is a nef and big arithmetic $\RR$-divisor of $(C^0 \cap \Tpsh)$-type on $X$.

\item
$\overline{N}$ is an effective arithmetic $\RR$-divisor of $C^0$-type on $X$.

\item For any horizontal prime divisor $\Gamma$ on $X$ \rom{(}i.e. $\Gamma$ is a reduced and irreducible divisor on $X$ such that $\Gamma$ is flat over $\ZZ$\rom{)},
\begin{multline*}
\hspace{3em}
\mult_{\Gamma}(N) \\
\hspace{6em}
\leq \inf \left\{ \mult_{\Gamma}(f^*(H_0) + (1/l)(\phi)) \mid l \in \ZZ_{>0},\ \phi \in \aH(lf^*(\overline{D}_{\pmb{a}})) \setminus \{ 0 \} \right\}.
\end{multline*}
\end{enumerate}

\item \rom{(}Fujita's approximation\rom{)}
We assume that $\overline{D}_{\pmb{a}}$ is big.
Let $\operatorname{Int}(\Theta_{\pmb{a}})$ be the set of interior points of $\Theta_{\pmb{a}}$.
We choose $\pmb{x}_1, \ldots, \pmb{x}_r \in \operatorname{Int}(\Theta_{\pmb{a}}) \cap \QQ^n$ such that 
\[
\frac{(n+1)!}{2} \int_{\Theta} \phi_{(\pmb{x}_1, \varphi_{\pmb{a}}(\widetilde{\pmb{x}}_1)), \ldots, (\pmb{x}_r, \varphi_{\pmb{a}}(\widetilde{\pmb{x}}_r))}(\pmb{x}) d \pmb{x} > \avol(\overline{D}_{\pmb{a}}) - \epsilon,
\]
where
$\Theta := \Conv \{ \pmb{x}_1, \ldots, \pmb{x}_r \}$ and
\begin{multline*}
\hspace{3em}
\phi_{(\pmb{x}_1, \varphi_{\pmb{a}}(\widetilde{\pmb{x}}_1)), \ldots, (\pmb{x}_r, \varphi_{\pmb{a}}(\widetilde{\pmb{x}}_r))}(\pmb{x}) := \\
\qquad
\max \{ t \in \RR \mid (\pmb{x}, t) \in \Conv \{ (\pmb{x}_1, \varphi_{\pmb{a}}(\widetilde{\pmb{x}}_1)),
\ldots, (\pmb{x}_r, \varphi_{\pmb{a}}(\widetilde{\pmb{x}}_r)) \} \subseteq \RR^n \times \RR \}
\end{multline*}
for $\pmb{x} \in \Theta$ \rom{(}see
Conventions and terminology~\rom{\ref{CT:lifting}} for the definition of $\widetilde{\pmb{x}}_1, \ldots, \widetilde{\pmb{x}}_r$\rom{)}.
Using the above points $\pmb{x}_1, \ldots, \pmb{x}_r$,
we can construct a birational morphisms $\mu : Y \to \PP^n_{\ZZ}$ of projective, generically smooth and normal arithmetic varieties,
and a nef arithmetic $\QQ$-divisor $\overline{P}$ of $(C^{\infty} \cap \Tpsh)$-type on $Y$ such that
\[
\overline{P} \leq \mu^*(\overline{D}_{\pmb{a}})\quad\text{and}\quad
\avol(\overline{P}) > \avol(\overline{D}_{\pmb{a}}) - \epsilon.
\]
For details, see Section~\rom{\ref{sec:Fujita:app}}.
\end{enumerate}
\end{Main}

I would like to express my thanks to Prof. Yuan.
The studies of this paper  started from his question.
I thank  Dr. Uchida.
Without his calculation of the limit of a sequence,
I could not find the positive part of $\overline{D}_{\pmb{a}}$ on $\PP^1_{\ZZ}$.
In addition, I also thank Dr. Hajli for his comments. 

\bigskip
\renewcommand{\theConvention}{\arabic{Convention}}
\renewcommand{\theequation}{CT.\arabic{Convention}.\arabic{Claim}}
\subsection*{Conventions and terminology}

\begin{Convention}
\label{CT:entry}
For $\pmb{x} =(x_1, \ldots, x_r) \in \RR^r$,
the $i$-th entry $x_i$ of $\pmb{x}$ is denoted by $\pmb{x}(i)$. We define 
$\vert \pmb{x} \vert$ to be $\vert \pmb{x} \vert := x_1 + \cdots + x_r$.
\end{Convention}

\begin{Convention}
\label{CT:lifting}
For $\pmb{x} = (x_1, \ldots, x_r) \in \RR^r$ and $m \in \RR$, we define $\widetilde{\pmb{x}}^m \in \RR^{r+1}$ to be
\[
\widetilde{\pmb{x}}^m = (m - x_1 - \cdots - x_r, x_1, \ldots, x_r).
\]
Note that $\vert \widetilde{\pmb{x}}^m \vert = m$.
For simplicity, in the case where $m=1$,
we denote $\widetilde{\pmb{x}}^m$ by $\widetilde{\pmb{x}}$.
\end{Convention}

\begin{Convention}
\label{CT:mult:coeff}
Let $\pmb{e} = (e_1, \ldots, e_r) \in \ZZ_{\geq 0}^r$ and $l = \vert \pmb{e} \vert$.
A monomial $z_1^{e_1} \cdots z_r^{e_r}$ is denoted by $z^{\pmb{e}}$.
The multinomial coefficient
${\displaystyle \frac{l!}{e_1! \cdots e_r!}}$
is denoted by ${\displaystyle \binom{l}{\pmb{e}}}$.
\end{Convention}

\renewcommand{\theTheorem}{\arabic{section}.\arabic{Theorem}}
\renewcommand{\theClaim}{\arabic{section}.\arabic{Theorem}.\arabic{Claim}}
\renewcommand{\theequation}{\arabic{section}.\arabic{Theorem}.\arabic{Claim}}

\section{Fundamental properties of the characteristic function}
\label{sec:fund:prop:func}
Let $\PP^n_{\ZZ} = \Proj(\ZZ[T_0, T_1, \ldots, T_n])$, $H_i = \{ T_i = 0 \}$ and $z_i = T_i/T_0$
for $i=0, \ldots, n$.
Let us fix $\pmb{a} = (a_0, a_1, \ldots, a_n) \in \RR_{>0}^{n+1}$.
We set 
\[
h_{\pmb{a}} = a_0  + a_1 \vert z_1 \vert^2 + \cdots + a_n \vert z_n \vert^2,\quad
g_{\pmb{a}} = \log h_{\pmb{a}} \quad\text{and}\quad
\omega_{\pmb{a}} = dd^c(g_{\pmb{a}})
\]
on $\PP^n(\CC)$, that is,
\[
g_{\pmb{a}} = -\log\vert T_0 \vert^2 + \log\left(a_0 \vert T_0 \vert^2 + \cdots + a_n \vert T_n \vert^2\right).
\]

\begin{Proposition}
\label{prop:calculation:volume:form}
\begin{enumerate}
\renewcommand{\labelenumi}{(\arabic{enumi})}
\item
$\omega_{\pmb{a}}$ is positive.
In particular, $g_{\pmb{a}}$ is a $H_0$-Green function of $(C^{\infty} \cap \Tpsh)$-type.

\item
If we set $\Phi_{\pmb{a}} = \omega_{\pmb{a}}^{\wedge n}$,
then
\[
\Phi_{\pmb{a}} = \left(\frac{\sqrt{-1}}{2\pi}\right)^n \frac{n! a_0 \cdots a_n}
{h_{\pmb{a}}^{n+1}}
dz_1 \wedge d\bar{z}_1 \wedge \cdots \wedge dz_n \wedge d\bar{z}_{n}
\]
and ${\displaystyle \int_{\PP^n(\CC)} \Phi_{\pmb{a}} = 1}$.
\end{enumerate}
\end{Proposition}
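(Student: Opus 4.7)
The plan is to compute $\omega_{\pmb{a}} = dd^c g_{\pmb{a}}$ locally on the affine chart $\{T_0 \ne 0\} \cong \CC^n$ with coordinates $z_1, \ldots, z_n$, and then transport the result to $\PP^n(\CC)$ by symmetry. A direct chain-rule computation with $g_{\pmb{a}} = \log h_{\pmb{a}}$ yields
\[
\frac{\partial^2 g_{\pmb{a}}}{\partial z_j \partial \bar z_k} = \frac{a_j \delta_{jk}}{h_{\pmb{a}}} - \frac{a_j \bar z_j \cdot a_k z_k}{h_{\pmb{a}}^2} =: H_{jk},
\]
so that $\omega_{\pmb{a}}$ is, up to the universal $dd^c$-constant, the $(1,1)$-form represented by the Hermitian matrix $(H_{jk})$.

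For (1), pairing $(H_{jk})$ with a tangent vector $\xi = (\xi_1, \ldots, \xi_n)$ gives
\[
\sum_{j,k} H_{jk} \xi_j \bar\xi_k = \frac{1}{h_{\pmb{a}}^2}\left(h_{\pmb{a}} \sum_j a_j |\xi_j|^2 - \Bigl|\sum_j a_j z_j \bar\xi_j\Bigr|^2\right).
\]
By Cauchy--Schwarz the second term is bounded by $\bigl(\sum a_j |z_j|^2\bigr)\bigl(\sum a_j |\xi_j|^2\bigr)$, and since $h_{\pmb{a}} = a_0 + \sum a_j |z_j|^2$ with $a_0 > 0$, the difference is strictly positive whenever $\xi \ne 0$. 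The same computation on the other affine charts $\{T_i \ne 0\}$ shows $\omega_{\pmb{a}}$ is positive throughout $\PP^n(\CC)$, so $g_{\pmb{a}}$ is a $H_0$-Green function of $(C^{\infty} \cap \Tpsh)$-type.

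For (2), the standard identity $(\partial \bar\partial g)^{\wedge n} = n!\, \det(g_{j\bar k})\, dz_1 \wedge d\bar z_1 \wedge \cdots \wedge dz_n \wedge d\bar z_n$ reduces the task to computing $\det(H_{jk})$. Observing that $h_{\pmb{a}}^2 (H_{jk})$ is the rank-one perturbation $h_{\pmb{a}} \operatorname{diag}(a_1, \ldots, a_n)$ minus the outer product of the column $(a_j \bar z_j)_j$ and the row $(a_k z_k)_k$, the matrix-determinant lemma gives
\[
\det\bigl(h_{\pmb{a}}^2 (H_{jk})\bigr) = h_{\pmb{a}}^n (a_1 \cdots a_n)\left(1 - \frac{1}{h_{\pmb{a}}} \sum_j a_j |z_j|^2\right) = a_0 a_1 \cdots a_n\, h_{\pmb{a}}^{n-1},
\]
hence $\det(H_{jk}) = a_0 \cdots a_n / h_{\pmb{a}}^{n+1}$, which produces the stated formula for $\Phi_{\pmb{a}}$.

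To verify $\int_{\PP^n(\CC)} \Phi_{\pmb{a}} = 1$, I plan to reduce to the case $\pmb{a} = (1, \ldots, 1)$, where $\Phi_{(1, \ldots, 1)}$ is the classical Fubini--Study volume form whose integral is the well-known value $1$. The substitution $w_j := \sqrt{a_j/a_0}\, z_j$ on the chart $\CC^n$ transforms $h_{\pmb{a}}$ into $a_0(1 + |w|^2)$ and produces a Jacobian $\prod_j (a_0/a_j)$; inserting this into the formula for $\Phi_{\pmb{a}}$ cancels all the $a_i$-factors and leaves exactly the Fubini--Study integrand. Conceptually, the automorphism of $\PP^n$ scaling each $T_i$ by $\sqrt{a_i}$ pulls $\Phi_{\pmb{a}}$ back to $\Phi_{(1, \ldots, 1)}$. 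The main obstacle I anticipate is purely bookkeeping: matching the precise $dd^c$-convention constant $(\sqrt{-1}/2\pi)^n$ in the statement with the sign $(-1)^{n(n-1)/2}$ produced by rearranging $(dz_j \wedge d\bar z_k)^{\wedge n}$ into the standard order; a single check in the case $n=1$, $\pmb{a}=(1,1)$ pins down all conventions and the general case then follows from the reduction above.
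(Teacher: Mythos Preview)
Your proof is correct. The overall structure matches the paper's---compute the Hessian of $\log h_{\pmb{a}}$, show it is positive definite, take its determinant, then verify the total mass---but several of the individual steps are handled differently. For positivity, the paper writes out the quadratic form explicitly as
\[
\frac{a_0 \sum_i a_i |\lambda_i|^2 + \sum_{i<j} a_i a_j |z_i \bar\lambda_j - z_j \bar\lambda_i|^2}{h_{\pmb{a}}^2},
\]
which is a Lagrange-type identity rather than your Cauchy--Schwarz bound; the two are equivalent here. For the determinant, the paper proves the identity $\det(\delta_{ij} t_i - \alpha_i \bar\alpha_j) = \prod t_i - \sum_i |\alpha_i|^2 \prod_{j\ne i} t_j$ by a polynomial argument (observing the determinant vanishes whenever two $t_i$'s do), whereas you invoke the matrix-determinant lemma directly; your route is shorter but relies on a named result, the paper's is self-contained. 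The most substantive difference is the integral: the paper observes that $\omega_{\pmb{a}} = c_1(\OO(1), |\cdot|_{\pmb{a}})$, so $\int \omega_{\pmb{a}}^n = 1$ is immediate from cohomology, bypassing any computation. Your change-of-variables reduction to the Fubini--Study case is perfectly valid and more hands-on, but the paper's argument shows at a glance that the value is independent of $\pmb{a}$ and requires no bookkeeping with constants.
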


\begin{proof}
(1) Note that
\[
\omega_{\pmb{a}} = \frac{\sqrt{-1}}{2\pi} \left( \sum_{i=1}^n \frac{a_i}{h_{\pmb{a}}(z)} dz_i \wedge d\bar{z}_i
- \sum_{i,j} \frac{a_ia_j\bar{z}_iz_j}{h_{\pmb{a}}(z)^2} dz_i \wedge d\bar{z}_j \right).
\]
We set
\[
A = \left( \delta_{ij}\frac{a_i}{h_{\pmb{a}}(z)} - \frac{a_ia_j\bar{z}_iz_j}{h_{\pmb{a}}(z)^2} \right)_{\substack{1 \leq i \leq n, \\
1 \leq j \leq n}}.
\]
Then it is easy to see that
\[
\begin{pmatrix} \bar{\lambda}_1 & \cdots & \bar{\lambda}_n \end{pmatrix} A \begin{pmatrix} \lambda_1 \\ \vdots \\ \lambda_n \end{pmatrix} =
\frac{a_0 \sum_{i=1}^n a_i \vert \lambda_i \vert^2 + \sum_{i < j} a_i a_j \vert z_i \bar{\lambda}_j - z_j \bar{\lambda}_i \vert^2}{h_{\pmb{a}}(z)^2}.
\]
Thus $A$ is positive definite.

\medskip
(2) The first assertion follows from the following claim:

\begin{Claim}
For $\alpha_1, \ldots, \alpha_n \in \CC$,
\[
\det \begin{pmatrix} \delta_{ij}
t_i - \alpha_i\bar{\alpha}_j
\end{pmatrix}_{\substack{1 \leq i \leq n \\ 1 \leq j \leq n}} = t_1 \cdots t_n - \sum_{i=1}^n \vert \alpha_i \vert^2 t_1 \cdots t_{i-1} \cdot
t_{i+1} \cdots t_n.
\]
\end{Claim}

\begin{proof}
We denote $\begin{pmatrix} \delta_{ij}t_i - \alpha_i\bar{\alpha}_j \end{pmatrix}_{\substack{1 \leq i \leq n \\ 1 \leq j \leq n}}$ by $B$.
If $t_i = t_j = 0$ for $i \not= j$, then
the $i$-the column and the $j$-the column of $B$ are linearly dependent, so that $\det B = 0$.
Therefore, we can set
\[
\det B = t_1 \cdots t_n -\sum_{i=1}^n c_i t_1 \cdots t_{i-1} \cdot t_{i+1} \cdots t_n
\]
for some $c_1, \ldots, c_n \in \CC$.
It is easy to see that $\det B = -\vert \alpha_i \vert^2$ if $t_i = 0$ and
$t_1 = \cdots = t_{i-1} = t_{i+1} = \cdots = t_n = 1$.
Thus $c_i = \vert \alpha_i \vert^2$.
\end{proof}

Let $\vert \cdot\vert_{\pmb{a}}$ be a $C^{\infty}$-hermitian metric of $\OO(1)$ given by
\[
\vert T_i \vert_{\pmb{a}} = \frac{\vert T_i \vert}{\sqrt{a_0 \vert T_1 \vert^2 + a_1 \vert T_1 \vert^2 + \cdots + a_n \vert T_n \vert^2}}
\]
for $i=0, \ldots, n$.
Then $c_1(\OO(1), \vert \cdot\vert_{\pmb{a}}) = \omega_{\pmb{a}}$.
Thus the second assertion follows.
\end{proof}

We define a function $\varphi_{\pmb{a}} : \RR_{\geq 0}^{n+1} \to \RR$ to be
\[
\varphi_{\pmb{a}}(x_0, \ldots, x_n) = -\sum_{i=0}^n x_i \log x_i + \sum_{i=0}^n x_i \log a_i,
\]
which is called the {\em characteristic function of $g_{\pmb{a}}$}. 
The function $\varphi_{\pmb{a}}$ play a key role in this paper.
Here note that $\varphi_{\pmb{a}}(0, \ldots, \stackrel{\stackrel{i}{\vee}}{1}, \ldots, 0) = \log a_i$ for $i=0, \ldots, n$.
Notably the characteristic function is very similar to the entropy function in the coding theory.

\begin{Lemma}
\label{lem:characteristic:fun:max}
For $(x_0, \ldots, x_n) \in \RR_{\geq 0}^{n+1}$ with $x_0 + x_1 + \cdots + x_n = 1$,
\[
\varphi_{\pmb{a}}(x_0, \ldots, x_n) \leq  \log(a_0 + a_1 + \cdots + a_n),
\]
and the equality holds if and only if
\[
x_0 = a_0/(a_0 + a_1 + \cdots + a_n), \ldots, x_n = a_n/(a_0 + a_1 + \cdots + a_n).
\]
\end{Lemma}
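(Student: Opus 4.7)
The statement is the Gibbs inequality in disguise, so my plan is to rewrite $\varphi_{\pmb{a}}$ in a form where Jensen's inequality applied to $\log$ does the work. Note that, with the standard convention $0\log 0 = 0$, we have
\[
\varphi_{\pmb{a}}(x_0, \ldots, x_n) \;=\; \sum_{i=0}^n x_i \log(a_i/x_i) \;=\; \sum_{\substack{0 \leq i \leq n \\ x_i > 0}} x_i \log(a_i/x_i).
\]
Let $I = \{ i \mid x_i > 0 \}$ and $S = a_0 + \cdots + a_n$. Since the $x_i$ with $i \in I$ are nonnegative and sum to $1$, they form a probability distribution on $I$, and I can apply Jensen's inequality to the strictly concave function $\log$ to get
\[
\sum_{i \in I} x_i \log(a_i/x_i) \;\leq\; \log\!\left( \sum_{i \in I} x_i \cdot a_i/x_i \right) \;=\; \log\!\left( \sum_{i \in I} a_i \right) \;\leq\; \log S,
\]
which is the desired inequality.

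For the equality case I would argue in two stages. First, the outer inequality $\log(\sum_{i \in I} a_i) \leq \log S$ is an equality if and only if $I = \{0, 1, \ldots, n\}$, i.e. every $x_i > 0$. Second, once all $x_i > 0$, strict concavity of $\log$ forces Jensen's inequality to be strict unless the values $a_i/x_i$ are all equal across $i$; call the common value $c$. Then $a_i = c x_i$, and summing over $i$ gives $S = c$, hence $x_i = a_i/S$ as claimed. Conversely, direct substitution shows $\varphi_{\pmb{a}}(a_0/S, \ldots, a_n/S) = \sum_i (a_i/S)\log S = \log S$, confirming equality.

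No step looks like a genuine obstacle: the only mildly delicate point is bookkeeping around the indices with $x_i = 0$, which is handled cleanly by restricting Jensen to $I$ and noting that dropping a positive $a_i$ from the sum strictly decreases the log. Alternatively, if one prefers a calculus proof, Lagrange multipliers on $\sum x_i = 1$ yield $\log(a_i/x_i) = \lambda + 1$, hence $x_i \propto a_i$, and strict concavity of $\varphi_{\pmb{a}}$ (as a sum of the strictly concave terms $-x_i \log x_i$ plus linear terms) guarantees this critical point is the unique global maximum on the simplex.
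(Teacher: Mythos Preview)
Your proof is correct and takes essentially the same approach as the paper: restrict to the support $I=\{i:x_i>0\}$, apply Jensen's inequality for the strictly concave function $\log$ to the values $a_i/x_i$, and then use $\sum_{i\in I}a_i\le S$ together with $a_i>0$ for the equality analysis. The only cosmetic difference is that the paper first states a slightly more general weighted inequality (its Claim~\ref{claim:lem:characteristic:fun:max}) before specializing, since that form is reused later in the paper.
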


\begin{proof}
Let us begin with the following claim:

\begin{Claim}
\label{claim:lem:characteristic:fun:max}
For $\alpha_1, \ldots, \alpha_r, \beta_1, \ldots, \beta_r, t_1, \ldots, t_r \in \RR_{>0}$ with $\alpha_1 + \cdots + \alpha_r = 1$,
\addtocounter{Claim}{1}
\[
\sum_{i=1}^r \alpha_i \log t_i  \leq \log\left( \sum_{i=1}^r \beta_i t_i \right) + \sum_{i=1}^r \alpha_i \log \frac{\alpha_i}{\beta_i},
\]
and the equality holds if and only if $\frac{\beta_1}{\alpha_1} t_1  = \cdots = \frac{\beta_r}{\alpha_r} t_r$.
\end{Claim}

\begin{proof}
Note that if we set $t'_i = \frac{\beta_i}{\alpha_i}t_i$ for $i=1, \ldots, r$, then
\[
\sum_{i=1}^r \alpha_i \log t_i - \log\left( \sum_{i=1}^r \beta_i t_i \right) =
\sum_{i=1}^r \alpha_i \log t'_i  -\log \left( \sum_{i=1}^r \alpha_i t'_i \right) +
\sum_{i=1}^r \alpha_i \log \frac{\alpha_i}{\beta_i}.
\]
Thus we may assume that $\alpha_i = \beta_i$ for all $i$.
In this case, the inequality is nothing more than Jensen's inequality for the strictly concave function $\log$.
\end{proof}

We set $I = \{ i \mid x_i \not= 0 \}$. Then, using the above claim, we have
\[
\sum_{i \in I} x_i \log a_i \leq \log \left( \sum_{i \in I} a_i \right) + \sum_{i \in I} x_i \log x_i,
\]
and hence
\begin{align*}
\varphi_{\pmb{a}}(x_0, \ldots, x_n) & = \sum_{i \in I} -x_i \log x_i + \sum_{i \in I} x_i \log a_i \\
& \leq \log \left( \sum_{i \in I} a_i \right) \leq \log (a_0 + \cdots + a_n).
\end{align*}
In addition, the equality holds if and only if
$a_i/x_i = a_j/x_j$ for all $i, j \in I$ and $a_i = 0$ for all $i \not\in I$.
Thus the assertion follows.
\end{proof}

Note that 
\[
H^0(\PP^n_{\ZZ}, lH_0) = \bigoplus_{\substack{\pmb{e} \in \ZZ_{\geq 0}^{n},\vert \pmb{e} \vert \leq l} } \ZZ z^{\pmb{e}}
\]
(for the definition of $\vert \pmb{e} \vert$ and $z^{\pmb{e}}$, see Conventions and terminology~\ref{CT:entry} and \ref{CT:mult:coeff}).
According as \cite{MoArZariski}, $\vert\cdot\vert_{lg_{\pmb{a}}}$, $\Vert\cdot\Vert_{lg_{\pmb{a}}}$ and
$\langle\cdot,\cdot\rangle_{lg_{\pmb{a}}}$ are defined by
\begin{align*}
& \vert\phi\vert_{lg_{\pmb{a}}} := \vert \phi \vert \exp(-lg_{\pmb{a}}/2),\quad
\Vert\phi\Vert_{lg_{\pmb{a}}} := \sup \{ \vert\phi\vert_{lg_{\pmb{a}}}(x) \mid x \in \PP^n(\CC) \}
\intertext{and}
& \langle\phi,\psi \rangle_{lg_{\pmb{a}}} := \int_{\PP^n(\CC)} \phi \bar{\psi} \exp(-lg_{\pmb{a}}) \Phi_{\pmb{a}},
\end{align*}
where $\phi, \psi \in H^0(\PP^n(\CC), lH_0)$.

\begin{Proposition}
\label{prop:cal:inner:product}
Let $l$ be a positive integer and $\pmb{e} = (e_1, \ldots, e_n), \pmb{e}' = (e'_1, \ldots, e'_n) \in \ZZ_{\geq 0}^n$ with
$\vert \pmb{e} \vert, \vert \pmb{e}' \vert \leq l$.
\begin{enumerate}
\renewcommand{\labelenumi}{(\arabic{enumi})}
\item
$\Vert z^{\pmb{e}} \Vert_{lg_{\pmb{a}}}^2 = \exp(-l \varphi_{\pmb{a}}(\widetilde{\pmb{e}}^l/l))$
\rom{(}for the definition of $\widetilde{\pmb{e}}^l$, see Conventions and terminology~\rom{\ref{CT:lifting}}\rom{)}.

\item
\[
\langle z^{\pmb{e}}, z^{\pmb{e}'} \rangle_{lg_{\pmb{a}}} = \begin{cases}
0 & \text{if $\pmb{e} \not= \pmb{e}'$}, \\
\\
{\displaystyle \frac{1}{\binom{n+l}{n}\binom{l}{\widetilde{\pmb{e}}^l}\pmb{a}^{\widetilde{\pmb{e}}^l}}} & \text{if $\pmb{e} = \pmb{e}'$}
\end{cases}
\]
\rom{(}for the definition of $\binom{l}{\widetilde{\pmb{e}}^l}$, see Conventions and terminology~\rom{\ref{CT:mult:coeff}}\rom{)}.
\end{enumerate}
\end{Proposition}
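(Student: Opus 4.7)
\textbf{Plan for Proposition~\ref{prop:cal:inner:product}.}

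For part (1), the plan is to rewrite $|z^{\pmb{e}}|^2 \exp(-lg_{\pmb{a}})$ in a homogeneous form suitable for maximization. Write the expression in terms of $|T_0|, \ldots, |T_n|$: since $g_{\pmb{a}} = -\log|T_0|^2 + \log(\sum a_i|T_i|^2)$, setting $e_0 = l - |\pmb{e}|$ gives
\[
|z^{\pmb{e}}|^2 \exp(-lg_{\pmb{a}}) = \frac{|T_0|^{2e_0}|T_1|^{2e_1}\cdots|T_n|^{2e_n}}{\bigl(\sum_{j=0}^n a_j|T_j|^2\bigr)^{l}}.
\]
Introduce the probability-simplex coordinates $u_i := a_i|T_i|^2/\sum_j a_j|T_j|^2$ so $\sum u_i = 1$, $u_i \ge 0$; the expression collapses to $\prod_{i=0}^n (u_i/a_i)^{e_i}$. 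Maximizing $\prod u_i^{e_i}$ over the standard simplex in $\RR^{n+1}$ (done by exactly the Jensen/AM--GM device used in Claim~\ref{claim:lem:characteristic:fun:max} of Lemma~\ref{lem:characteristic:fun:max}, with $\alpha_i = e_i/l$) yields the maximum at $u_i = e_i/l$, with value $\prod (e_i/l)^{e_i}$. Comparing with the definition of $\varphi_{\pmb{a}}$ and noting $\widetilde{\pmb{e}}^l/l = (e_0/l, e_1/l, \ldots, e_n/l)$ recovers $\Vert z^{\pmb{e}}\Vert_{lg_{\pmb{a}}}^2 = \prod_i (e_i/l)^{e_i}/a_i^{e_i} = \exp(-l\varphi_{\pmb{a}}(\widetilde{\pmb{e}}^l/l))$.

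For part (2), I would substitute the explicit formula for $\Phi_{\pmb{a}}$ from Proposition~\ref{prop:calculation:volume:form}(2) into the definition of $\langle\cdot,\cdot\rangle_{lg_{\pmb{a}}}$, reducing the integral over $\PP^n(\CC)$ to one over $\CC^n$ with integrand
\[
z^{\pmb{e}}\overline{z^{\pmb{e}'}}\cdot \frac{n!\,a_0\cdots a_n}{h_{\pmb{a}}^{n+1+l}}\left(\frac{\sqrt{-1}}{2\pi}\right)^n dz_1\wedge d\bar z_1\wedge\cdots\wedge dz_n\wedge d\bar z_n.
\]
Switching to polar coordinates $z_j = r_j e^{i\theta_j}$ isolates an angular factor $\prod_j e^{i(e_j-e'_j)\theta_j}$ whose integration gives the Kronecker $\delta_{\pmb{e},\pmb{e}'}$, settling the off-diagonal case.

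For the diagonal case, after the substitutions $s_j = r_j^2$ and then $t_j = a_j s_j/a_0$, the integral reduces to a standard Dirichlet integral
\[
\int_{(\RR_{>0})^n}\frac{\prod_{j=1}^n t_j^{e_j}}{(1+\sum_j t_j)^{n+1+l}}\,dt_1\cdots dt_n = \frac{e_0!\,e_1!\cdots e_n!}{(n+l)!},
\]
where $e_0 = l - |\pmb{e}|$ appears naturally as the exponent $\beta = l - |\pmb{e}| + 1 - 1$ of the Dirichlet kernel. The main technical task is then bookkeeping of the $a_i$-factors: collecting the prefactor $n!\,a_0\cdots a_n$, the Jacobian factors $\prod(a_0/a_j)^{e_j+1}$ and the denominator power $a_0^{n+1+l}$, one finds after cancellation the clean expression $n!/\pmb{a}^{\widetilde{\pmb{e}}^l}$. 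Multiplying by the Dirichlet value and rewriting
\[
\frac{n!\,\prod_{i=0}^n e_i!}{(n+l)!} = \frac{1}{\binom{n+l}{n}\binom{l}{\widetilde{\pmb{e}}^l}}
\]
gives the stated formula.

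The main obstacle is the algebraic bookkeeping in part (2): tracking all powers of $a_0, \ldots, a_n$ through the two successive substitutions and verifying that the exponent on $a_0$ ends up as $-e_0$ so that the final $a$-factor is exactly $\pmb{a}^{-\widetilde{\pmb{e}}^l}$. Everything else is routine: part (1) is just Jensen applied as in Lemma~\ref{lem:characteristic:fun:max}, and the integral in part (2) is a classical beta/Dirichlet evaluation.
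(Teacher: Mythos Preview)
Your proposal is correct and follows essentially the same approach as the paper: part~(1) is the paper's argument via Claim~\ref{claim:lem:characteristic:fun:max} rephrased in simplex coordinates, and part~(2) matches the paper's polar-coordinate reduction. The only cosmetic difference is that for the radial integral you invoke the multivariate Dirichlet formula directly, whereas the paper iterates the one-variable identity $\int_0^\infty \frac{ax^m}{(ax+b)^N}\,dx = \frac{m!}{a^m b^{N-m-1}(N-1)\cdots(N-m-1)}$; these are equivalent computations.
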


\begin{proof}
(1) By the definition of $\vert z^{\pmb{e}} \vert_{lg_{\pmb{a}}}$, we can see
\[
\log \vert z^{\pmb{e}} \vert_{lg_{\pmb{a}}}^2 = e_0 \log \vert T_0 \vert^2 + \cdots + e_n \log \vert T_n \vert^2 - l \log(a_0 \vert T_0 \vert^2 + \cdots + a_n \vert T_n \vert^2),
\]
where $e_0 = l - e_1 - \cdots - e_n$ and $(T_0: \cdots : T_n)$ is a homogeneous coordinate of $\PP^n(\CC)$ such that $z_i = T_i/T_0$.
Here we set $e'_i = e_i/l$ for $i=0, \ldots, l$ and $I = \{ i \mid e_i \not= 0 \}$.
Then, by using Claim~\ref{claim:lem:characteristic:fun:max},
\[
\frac{1}{l} \log \vert z^{\pmb{e}} \vert_{lg_{\pmb{a}}}^2 \leq 
\sum_{i \in I} e'_i \log \vert T_i \vert^2  -  \log\left(\sum_{i \in I} a_i \vert T_i \vert^2 \right) \leq - \varphi_{\pmb{a}}(e'_0, \ldots, e'_n).
\]
Moreover, if we set $T_i = \sqrt{e'_i/a_i}$ for $i=0, \ldots, n$, then the equality holds. Thus (1) follows.

\medskip
(2) First of all, Proposition~\ref{prop:calculation:volume:form},
\[
\langle z^{\pmb{e}}, z^{\pmb{e}'} \rangle_{lg_{\pmb{a}}} = \left( \frac{\sqrt{-1}}{2\pi}\right)^n
 \int_{\PP^n(\CC)} \frac{n! a_0 \cdots a_n z^{\pmb{e}} \bar{z}^{\pmb{e}'}d z_1 \wedge d\bar{z}_1 \wedge \cdots \wedge dz_n \wedge d\bar{z}_n}{(a_0 + a_1 \vert z_1 \vert^2 + \cdots + a_n \vert z_n \vert^2)^{n+l+1}}.
\]
If we set $z_i = x_i^{1/2} \exp(2\pi\sqrt{-1}\theta_i)$, then the above integral is equal to
\[
\int_{\RR^n \times [0,1]^n} \frac{n! a_0 \cdots a_n \prod_{i=1}^n x_i^{(e_i + e_i')/2} \exp(2\pi\sqrt{-1}(e_i - e'_i))}
{(a_0 + a_1 x_1 + \cdots + a_n x_n)^{n+l+1}}dx_1 \cdots dx_n d\theta_1 \cdots d\theta_n,
\]
and hence
\[
\langle z^{\pmb{e}}, z^{\pmb{e}'} \rangle_{lg_{\pmb{a}}} =
\begin{cases}
0 & \text{if $\pmb{e} \not= \pmb{e}'$}, \\
\\
{\displaystyle \int_{\RR^n} \frac{n! a_0 \cdots a_n x_1^{e_1}\cdots x_n^{e_n}} 
{(a_0 + a_1 x_1 + \cdots + a_n x_n)^{n+l+1}}dx_1 \cdots dx_n} & \text{if $\pmb{e}=\pmb{e}'$}.
\end{cases}
\]
It is easy to see that
\[
\int_{0}^{\infty} \frac{a x^m}{(ax + b)^n} dx = \frac{m!}{a^m b^{n - m -1} (n-1) (n-2) \cdots (n-m)(n-m-1)}
\]
for $a, b \in \RR_{>0}$ and $n, m \in \ZZ_{\geq 0}$ with $n - m \geq 2$.
Thus we can see
\[
\langle z^{\pmb{e}}, z^{\pmb{e}} \rangle_{lg_{\pmb{a}}} =
\frac{n! e_n! \cdots e_1!}{(n+l)(n+l-1)\cdots (e_0 + 1) a_n^{e_n} \cdots a_1^{e_1} a_0^{e_0}},
\]
where $e_0 = l - e_1 - \cdots - e_n$.
Therefore
the assertion follows.
\end{proof}

Next we observe the following lemma:

\begin{Lemma}
\label{lem:stirling}
If we set $A_n = (n+2)/2$ and $B_n = (n+2)\log \sqrt{2\pi} + (n+2)/12$, then
\[
\left\vert \frac{1}{l} \log \left( \frac{l!}{k_0! \cdots k_n!} a_0^{k_0} \cdots a_n^{k_n} \right) - 
\varphi_{\pmb{a}}(k_0/l, \ldots, k_n/l) \right\vert
\leq \frac{1}{l} (A_n \log l + B_n)
\]
holds for all $l \geq 1$ and $(k_0, \ldots, k_n) \in \ZZ_{\geq 0}^{n+1}$ with $k_0 + \cdots + k_n = l$.
\end{Lemma}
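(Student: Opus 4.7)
The plan is to compare both sides directly by expanding the logarithm of the multinomial, applying Stirling's formula term by term, and bounding the resulting error. First I would rewrite
$$\log\left(\frac{l!}{k_0!\cdots k_n!}a_0^{k_0}\cdots a_n^{k_n}\right) = \log l! - \sum_{i=0}^n \log k_i! + \sum_{i=0}^n k_i \log a_i,$$
while, using $k_0+\cdots+k_n = l$,
$$l\,\varphi_{\pmb{a}}(k_0/l,\ldots,k_n/l) = l\log l - \sum_{i=0}^n k_i \log k_i + \sum_{i=0}^n k_i \log a_i.$$
The $\sum k_i\log a_i$ contributions cancel in the difference, so the task reduces to bounding
$$E := \log l! - \sum_{i=0}^n \log k_i! - l\log l + \sum_{i=0}^n k_i \log k_i$$
in absolute value by $A_n\log l + B_n$.

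Next I would invoke the quantitative Stirling bound $\log m! = m\log m - m + \tfrac{1}{2}\log(2\pi m) + \rho_m$ with $0 \leq \rho_m \leq 1/(12m)$ for $m \geq 1$, noting that the indices $i$ with $k_i = 0$ contribute nothing to either $\log k_i!$ or $k_i\log k_i$ (the latter by the convention $0\log 0 = 0$). Setting $I := \{i : k_i > 0\}$ and $s := \#I \leq n+1$, the identity $\sum_{i\in I} k_i = l$ causes the $m\log m$ and $-m$ contributions to telescope away, leaving
$$E = \tfrac{1}{2}\log(2\pi l) - \tfrac{1}{2}\sum_{i\in I}\log(2\pi k_i) + \rho_l - \sum_{i\in I}\rho_{k_i}.$$

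From here the estimate is mechanical. For the upper bound on $E$, each term $-\tfrac{1}{2}\log(2\pi k_i)$ is non-positive (as $k_i\geq 1$) and $\rho_l \leq 1/12$, hence $E \leq \tfrac{1}{2}\log(2\pi l) + \tfrac{1}{12}$. For the lower bound, I would use $k_i \leq l$ to get $-\tfrac{1}{2}\log(2\pi k_i) \geq -\tfrac{1}{2}\log(2\pi l)$, together with $\sum_{i\in I}\rho_{k_i} \leq s/12 \leq (n+1)/12$, yielding $E \geq -\tfrac{n}{2}\log(2\pi l) - \tfrac{n+1}{12}$. Both are comfortably dominated by $\tfrac{n+2}{2}\log l + (n+2)\log\sqrt{2\pi} + (n+2)/12 = A_n\log l + B_n$, and dividing through by $l$ gives the claim. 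The only real subtlety — and the main obstacle to writing a fully formal proof — is the bookkeeping required to handle indices with $k_i = 0$, where Stirling does not literally apply; restricting each sum to the support $I$ and appealing to $0\log 0 = 0$ resolves this cleanly.
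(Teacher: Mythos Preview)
Your proof is correct and follows essentially the same approach as the paper: both apply Stirling's formula with explicit error term to $l!$ and to each nonzero $k_i!$, restrict to the support $I=\{i:k_i>0\}$ to handle the vanishing indices, and arrive at the same expression for the error. The paper stops after writing out the error terms and asserts the bound; you carry out the final estimates explicitly, which is if anything an improvement in clarity.
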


\begin{proof}
First of all, note that, for $m \geq 1$,
\[
m! = \sqrt{2\pi m} \ \frac{m^m}{e^m} \ e^{\frac{\theta_m}{12m}}\quad(0 < \theta_m < 1)
\]
by Stirling's formula. We set $I = \{ i \mid k_i \not= 0\}$.
Then
\begin{align*}
\log (l!) &= \log(\sqrt{2\pi l}) + l \log l - l + \frac{\theta_l}{12l},\\
\log (k_i!) & = \log(\sqrt{2\pi k_i}) + k_i \log k_i - k_i + \frac{\theta_{k_i}}{12k_i}\quad(i \in I).
\end{align*}
Therefore,
\begin{multline*}
\frac{1}{l} \log \left( \frac{l!}{k_0! \cdots k_n!} a_0^{k_0} \cdots a_n^{k_n} \right) =
\varphi_{\pmb{a}}(k_0/l, \ldots, k_n/l) \\
+ \frac{1}{l} \log(\sqrt{2\pi l}) + \frac{\theta_l}{12l^2}
-\sum_{i \in I} \left( \frac{1}{l} \log(\sqrt{2\pi k_i}) + \frac{\theta_{k_i}}{12lk_i} \right),
\end{multline*}
which yields the assertion.
\end{proof}

Let $\overline{D}_{\pmb{a}}$ be an arithmetic divisor of ($C^{\infty} \cap \Tpsh$)-type on $\PP^n_{\ZZ}$ 
given by
\[
\overline{D}_{\pmb{a}} := (H_0, g_{\pmb{a}}) = (H_0, \log(a_0 + a_1 \vert z_1 \vert^2 + \cdots + a_n \vert z_n \vert^2)).
\]
Moreover, $\Theta_{\pmb{a}}$ is defined to be
\[
\Theta_{\pmb{a}} := \{ (x_1, \ldots, x_n) \in \Delta_n \mid \varphi_{\pmb{a}}(1 - x_1 - \cdots - x_n, x_1, \ldots, x_n ) \geq 0 \},
\]
where $\Delta_n = \{ (x_1, \ldots, x_n) \in \RR_{\geq 0}^{n} \mid x_1 + \cdots + x_n \leq 1 \}$.
Note that $\Theta_{\pmb{a}}$ is a compact convex set.
Finally we consider the following proposition:

\begin{Proposition}
\label{prop:criterion:QQ:effective}
Let us fix a positive integer $l$. Then we have the following:
\begin{enumerate}
\renewcommand{\labelenumi}{(\arabic{enumi})}
\item
$\aH(\PP^n_{\ZZ}, l \overline{D}_{\pmb{a}}) \not= \{ 0 \}$ if and only if
$l \Theta_{\pmb{a}} \cap \ZZ^{n} \not= \emptyset$.

\item
If $l \Theta_{\pmb{a}} \cap \ZZ \not= \emptyset$,
then $\langle \aH(\PP^n_{\ZZ}, l \overline{D}_{\pmb{a}}) \rangle_{\ZZ} = \bigoplus_{\pmb{e} \in l \Theta_{\pmb{a}} \cap \ZZ^n} \ZZ z^{\pmb{e}}$.
\end{enumerate}
\end{Proposition}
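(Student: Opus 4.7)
My plan is to reduce both parts to the norm identity in Proposition~\ref{prop:cal:inner:product}(1), using the fact that $g_{\pmb{a}}$ is invariant under the compact torus $T^n$ acting diagonally on the affine coordinates by $(z_1,\ldots,z_n)\mapsto (e^{2\pi\sqrt{-1}\theta_1}z_1,\ldots,e^{2\pi\sqrt{-1}\theta_n}z_n)$. Given the monomial basis $\{z^{\pmb{e}}\}_{|\pmb{e}|\leq l}$ of $H^0(\PP^n_{\ZZ}, lH_0)$, I write any $\phi \in \aH(\PP^n_{\ZZ}, l\overline{D}_{\pmb{a}})$ uniquely as $\phi = \sum_{|\pmb{e}|\leq l} c_{\pmb{e}}\,z^{\pmb{e}}$ with $c_{\pmb{e}} \in \ZZ$. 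The easy inclusion in both (1) and (2) is immediate: for each $\pmb{e}\in l\Theta_{\pmb{a}}\cap\ZZ^n$ we have $\varphi_{\pmb{a}}(\widetilde{\pmb{e}}^{\,l}/l)\geq 0$, so Proposition~\ref{prop:cal:inner:product}(1) gives $\Vert z^{\pmb{e}}\Vert_{lg_{\pmb{a}}}\leq 1$ and hence $z^{\pmb{e}}\in \aH(\PP^n_{\ZZ}, l\overline{D}_{\pmb{a}})$.

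For the converse, set $\phi_{\theta}(z) := \phi(e^{2\pi\sqrt{-1}\theta_1}z_1,\ldots,e^{2\pi\sqrt{-1}\theta_n}z_n)$. Since $g_{\pmb{a}}$ is $T^n$-invariant, $\vert \phi_{\theta}\vert_{lg_{\pmb{a}}}(z) = \vert \phi\vert_{lg_{\pmb{a}}}(e^{2\pi\sqrt{-1}\theta_1}z_1,\ldots,e^{2\pi\sqrt{-1}\theta_n}z_n)$, so $\Vert \phi_{\theta}\Vert_{lg_{\pmb{a}}} = \Vert \phi\Vert_{lg_{\pmb{a}}}\leq 1$. Fourier analysis on $[0,1]^n$ isolates each monomial coefficient via
\[
c_{\pmb{e}}\,z^{\pmb{e}}(z) \;=\; \int_{[0,1]^n} e^{-2\pi\sqrt{-1}\,\pmb{e}\cdot\theta}\,\phi_{\theta}(z)\,d\theta_1\cdots d\theta_n.
\]
Taking $\vert\cdot\vert_{lg_{\pmb{a}}}$ inside the integral and then the supremum in $z$, the triangle inequality combined with the $T^n$-invariance yields
\[
\vert c_{\pmb{e}}\vert \cdot \Vert z^{\pmb{e}}\Vert_{lg_{\pmb{a}}} \;\leq\; \Vert \phi\Vert_{lg_{\pmb{a}}} \;\leq\; 1.
\]

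Combining this with Proposition~\ref{prop:cal:inner:product}(1), I obtain $\vert c_{\pmb{e}}\vert \leq \exp\bigl(l\varphi_{\pmb{a}}(\widetilde{\pmb{e}}^{\,l}/l)/2\bigr)$. Since $c_{\pmb{e}}\in\ZZ$, whenever $\pmb{e}/l\notin\Theta_{\pmb{a}}$, i.e.\ $\varphi_{\pmb{a}}(\widetilde{\pmb{e}}^{\,l}/l)<0$, the bound is strictly less than $1$ and forces $c_{\pmb{e}}=0$. This is exactly the missing inclusion for (2), and (1) follows: a nonzero $\phi$ must have some $c_{\pmb{e}}\neq 0$, which then witnesses a lattice point of $l\Theta_{\pmb{a}}\cap\ZZ^n$.

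The only non-routine ingredient is the torus-averaging step, which is needed because a priori one cannot rule out cancellations between monomials of differing sup norm. Once one observes that the Green function $g_{\pmb{a}}$ is $T^n$-invariant and hence that Fourier projection onto a single character is a contraction in the sup norm $\Vert\cdot\Vert_{lg_{\pmb{a}}}$, the integrality of $c_{\pmb{e}}$ and Proposition~\ref{prop:cal:inner:product}(1) finish the proof without further input.
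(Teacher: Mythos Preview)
Your proof is correct and takes a genuinely different, more direct route than the paper. The paper argues as follows: given $\phi=\sum c_{\pmb{e}}z^{\pmb{e}}\in\aH(l\overline{D}_{\pmb{a}})$, it picks an extreme point $\pmb{e}_1$ of $\Conv\{\pmb{e}:c_{\pmb{e}}\neq 0\}$, expands $\phi^k$, and uses that $z^{k\pmb{e}_1}$ appears with coefficient $c_{\pmb{e}_1}^k$ and is $L^2$-orthogonal to the remaining monomials (Proposition~\ref{prop:cal:inner:product}(2)); the inequality $\langle\phi^k,\phi^k\rangle_{klg_{\pmb{a}}}\leq 1$ then gives a lower bound on $\binom{kl}{k\widetilde{\pmb{e}}^{\,l}_1}\pmb{a}^{k\widetilde{\pmb{e}}^{\,l}_1}$, and Stirling (Lemma~\ref{lem:stirling}) with $k\to\infty$ forces $\varphi_{\pmb{a}}(\widetilde{\pmb{e}}^{\,l}_1/l)\geq 0$; convexity of $l\Theta_{\pmb{a}}$ finishes. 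Your torus-averaging argument bypasses all of this: it exploits the $T^n$-invariance of $g_{\pmb{a}}$ to make Fourier projection onto each monomial a contraction for $\Vert\cdot\Vert_{lg_{\pmb{a}}}$, yielding $|c_{\pmb{e}}|\,\Vert z^{\pmb{e}}\Vert_{lg_{\pmb{a}}}\leq 1$ directly, with no asymptotics, no Stirling, no extreme-point combinatorics, and no appeal to the $L^2$ structure. The paper's method has the virtue of illustrating a robust ``pass to powers and use orthogonality'' technique, but for this particular statement your approach is strictly shorter and more elementary.
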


\begin{proof}
Let us begin with the following claim:

\begin{Claim}
Let $\phi \in \aH(\PP^n_{\ZZ}, l \overline{D}_{\pmb{a}})$.
If we write
\[
\phi = \sum_{\substack{\pmb{e} \in \ZZ_{\geq 0}^n, \vert \pmb{e} \vert \leq l}} c_{\pmb{e}} z^{\pmb{e}}\quad(c_{\pmb{e}} \in \ZZ),
\]
then $\{ \pmb{e} \mid c_{\pmb{e}} \not= 0 \} \subseteq l \Theta_{\pmb{a}}$.
\end{Claim}

\begin{proof}
Clearly we may assume that $\phi \not= 0$.
We set $\{ \pmb{e} \mid c_{\pmb{e}} \not= 0 \} = \{ \pmb{e}_1, \ldots, \pmb{e}_m \}$,
where $\pmb{e}_i \not= \pmb{e}_{j}$ for $i \not= j$.
Let $\pmb{e}_i$ be an extreme point of $\Conv\{ \pmb{e}_1, \ldots, \pmb{e}_m \}$.
Here let us see that
$\pmb{e}_i \in l \Theta_{\pmb{a}}$.
Renumbering $\pmb{e}_1, \ldots, \pmb{e}_m$, we may assume that $i = 1$.
Then, for $k \geq 1$,
\[
\phi^k = c_{\pmb{e}_1}^k z^{k\pmb{e}_1} + \sum_{\substack{k_1, \ldots, k_m \in \ZZ_{\geq 0},\\
k_1 + \cdots + k_m = k, \ k_1 \not= k}} \frac{k!}{k_1! \cdots k_m!} c_{\pmb{e}_1}^{k_1} \cdots c_{\pmb{e}_m}^{k_m} z^{k_1 \pmb{e}_1 + \cdots + k_m \pmb{e}_m}.
\]
Let us check that $k\pmb{e}_1 \not= k_1 \pmb{e}_1 + \cdots + k_m \pmb{e}_m$ holds for all $k_1, \ldots, k_m \in \ZZ_{\geq 0}$ with
$k_1 + \cdots + k_m = k$ and $k_1 \not= k$. Otherwise,
$\pmb{e}_1 = (k_2/(k - k_1))\pmb{e}_2 + \cdots + (k_m/(k - k_1))\pmb{e}_m$.
This is a contradiction because $\pmb{e}_1$ is an extreme point of $\Conv\{ \pmb{e}_1, \ldots, \pmb{e}_m \}$.
Therefore, we can write
\[
\phi^k = c_{\pmb{e}_1}^k z^{k\pmb{e}_1} + \sum_{\pmb{e}' \in \ZZ_{\geq 0}^n, \pmb{e}' \not= k\pmb{e}_1} c'_{\pmb{e}'} z^{\pmb{e}'}
\]
for some $c'_{\pmb{e}'} \in \ZZ$, which implies
\[
\langle \phi^k, \phi^k \rangle_{klg_{\pmb{a}}} = \frac{c_{\pmb{e}_1}^{2k}}{\binom{kl + n}{n} \binom{kl}{k\widetilde{\pmb{e}}^l_1} \pmb{a}^{k\widetilde{\pmb{e}}^l_1}} + (\text{non-negative real
number})
\]
by Proposition~\ref{prop:cal:inner:product}.
Since $\phi^k \in \aH(\PP^n_{\ZZ}, kl\overline{D}_{\pmb{a}})$, we have $\langle \phi^k, \phi^k \rangle_{klg_{\pmb{a}}} \leq 1$,
which yields 
\[
\binom{kl + n}{n} \binom{kl}{k\widetilde{\pmb{e}}^l_1} \pmb{a}^{k\widetilde{\pmb{e}}^l_1} \geq 1.
\]
Thus, by Lemma~\ref{lem:stirling},
\[
\varphi_{\pmb{a}}\left(\frac{k\widetilde{\pmb{e}}^l_1}{kl}\right) \geq -\frac{1}{kl} (A_n \log(kl) + B_n) - \frac{1}{kl} \log \binom{kl + n}{n}.
\]
Therefore, by taking $k \to \infty$, $\varphi_{\pmb{a}}\left(\frac{\widetilde{\pmb{e}}^l_1}{l}\right) \geq 0$, and hence
$\pmb{e}_1 \in l \Theta_{\pmb{a}}$.

Finally let us see the claim.
Let $\pmb{e}_{i_1}, \ldots, \pmb{e}_{i_r}$ be all extreme points of $\Conv\{ \pmb{e}_1, \ldots, \pmb{e}_m \}$.
Then, by the above observation, 
\[
\Conv\{ \pmb{e}_1, \ldots, \pmb{e}_m \} = \Conv\{ \pmb{e}_{i_1}, \ldots, \pmb{e}_{i_r} \} \subseteq l \Theta_{\pmb{a}}
\]
because $l \Theta_{\pmb{a}}$ is a convex set.
\end{proof}

Let us go back to the proofs of (1) and (2).
By Proposition~\ref{prop:cal:inner:product},
\[
\Vert z^{\pmb{e}} \Vert_{lg_{\pmb{a}}} =  \exp(-l\varphi_{\pmb{a}}(\widetilde{\pmb{e}}^l/l)).
\]
Thus
(1) and (2) follow from the above claim.
\end{proof}

\begin{Remark}
Let $\tilde{\rho}_{\pmb{a}}$ be a hermitian inner product of $H^0(\PP^n(\CC), \OO_{\PP^n}(1))$ given by
\[
\left( \tilde{\rho}_{\pmb{a}}(T_i, T_j) \right)_{0 \leq i, j \leq n} = \begin{pmatrix}
1/a_0 & 0 & \cdots & 0 & 0 \\
0 & 1/a_1 & \cdots &  0 & 0  \\
\vdots &    \vdots       &      \ddots        & \vdots & \vdots \\
0 &   0        & \cdots            & 1/a_{n-1} & 0 \\
0 & 0 & \cdots & 0 & 1/a_n 
\end{pmatrix}.
\]
Let $\rho_{\pmb{a}}$ be the quotient $C^{\infty}$-hermitian metric of $\OO_{\PP^n}(1)$ 
induced by $\tilde{\rho}_{\pmb{a}}$ and the canonical homomorphism
\[
H^0(\PP^n(\CC), \OO_{\PP^n}(1)) \otimes \OO_{\PP^n} \to \OO_{\PP^n}(1).
\]
Then $g_{\pmb{a}} = -\log \rho_{\pmb{a}}(T_0, T_0)$.
\end{Remark}

\begin{Remark}
Hajli \cite{Hajli} pointed out that, for $(x_1, \ldots, x_n) \in \Delta_n$, 
\[
-\varphi_{\pmb{a}}(1-x_1-\cdots-x_n, x_1, \ldots, x_n)
\]
is the Legendre-Fenchel transform of $\log(a_0 + a_1 e^{u_1} + \cdots + a_n e^{u_n})$, that is,
\begin{multline*}
\hspace{-1em}-\varphi_{\pmb{a}}(1-x_1-\cdots-x_n, x_1, \ldots, x_n) \\
\hspace{1.5em}= \sup \left.\left\{ u_1 x_1 + \cdots + u_n x_n - \log (a_0 + a_1 e^{u_1} + \cdots + a_n e^{u_n}) \ \right|\  (u_1, \ldots, u_n) \in \RR^n\right\}.
\end{multline*}
This can be easily checked by Claim~\ref{claim:lem:characteristic:fun:max}.
\end{Remark}

\section{Integral formula and Geography of $\overline{D}_{a,b}$}
Let $X$ be a $d$-dimensional, generically smooth, normal and projective arithmetic variety.
Let $\overline{D} = (D, g)$ be an arithmetic $\RR$-divisor of $C^0$-type on $X$.
Let $\Phi$ be an $F_{\infty}$-invariant volume form on $X(\CC)$ with ${\displaystyle \int_{X(\CC)} \Phi = 1}$.
Recall that $\langle \phi, \psi \rangle_g$ and $\Vert \phi \Vert_{g, L^2}$ are given by
\[
\langle \phi, \psi \rangle_g := \int_{X(\CC)} \phi \bar{\psi} \exp(-g) \Phi\quad\text{and}\quad
\Vert \phi \Vert_{g, L^2} := \sqrt{\langle \phi, \phi \rangle_g}
\]
for $\phi, \psi \in H^0(X, D)$.
We set
\[
\aH_{L^2}(X, \overline{D}) := \{ \phi \in H^0(X, D) \mid \Vert \phi\Vert_{g, L^2} \leq 1 \}.
\]
Let us begin with the following lemmas:

\begin{Lemma}
\label{lem:vol:L:2}
${\displaystyle \avol(\overline{D}) = \lim_{l\to\infty} \frac{\log \# \aH_{L^2}(X, l\overline{D})}{l^d/d!}}$.
\end{Lemma}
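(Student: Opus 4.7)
The plan is to deduce the $L^2$-counting formula from the sup-norm definition of $\avol$ by comparing the two norms on each finite-rank lattice $H^0(X,lD)$, and then to show that the resulting discrepancy in lattice-point counts is of order $l^{d-1}\log l$, hence negligible after division by $l^d/d!$.

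The first step is a two-sided norm comparison. Since $\Phi$ is a probability measure, the trivial bound $\Vert\phi\Vert_{lg,L^2}\le\Vert\phi\Vert_{lg}$ holds for every $\phi\in H^0(X,lD)$. Conversely, by Gromov's submean-value inequality for holomorphic sections of a $C^0$-hermitian line bundle on a compact complex space, there exist constants $C>0$ and $N\in\ZZ_{\ge 0}$, depending only on $(X,D,g,\Phi)$ and \emph{independent of} $l$, such that
\[
\Vert\phi\Vert_{lg}\le C\,l^{N}\,\Vert\phi\Vert_{lg,L^2}\qquad(\forall\,l\ge 1,\ \forall\,\phi\in H^0(X,lD)).
\]
Reformulated in terms of symmetric convex bodies in $H^0(X,lD)\otimes\RR$, this gives the inclusions
\[
\aH(X,l\overline{D})\subseteq \aH_{L^2}(X,l\overline{D})\subseteq (Cl^{N})\cdot\aH(X,l\overline{D}).
\]

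The second step is a lattice-point count estimate. Set $r(l):=\rank_{\ZZ}H^0(X,lD)$; by Riemann--Roch on the generic fibre $X_{\QQ}$, $r(l)=O(l^{d-1})$. Combining the inclusions above with the elementary comparison
\[
\#(L\cap\mu B)\le(2\lceil\mu\rceil+1)^{\rank L}\cdot\#(L\cap B)\qquad(\mu\ge 1),
\]
valid for any symmetric convex body $B$ in a lattice $L$ (it follows by covering $\mu B$ with translates of $B$ by lattice vectors), one obtains
\[
0\le \log\#\aH_{L^2}(X,l\overline{D})-\log\#\aH(X,l\overline{D})\le r(l)\log(2Cl^{N}+1)=O(l^{d-1}\log l).
\]
Dividing by $l^d/d!$ and letting $l\to\infty$ kills this discrepancy, so the two counting functions share a common limit. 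Since $\avol(\overline{D})$ is already known to equal $\lim_{l\to\infty}\log\#\aH(X,l\overline{D})/(l^d/d!)$ (a genuine limit, by the arithmetic Fujita approximation), the claimed formula follows.

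The only non-formal input is Gromov's inequality, which is a standard ingredient in arithmetic intersection theory and the arithmetic Hilbert--Samuel formula (cf.\ Gillet--Soul\'e, Yuan, Chen, Moriwaki); so no genuine obstacle arises. The remaining subtlety worth flagging is that the lemma asserts equality with a bona fide limit rather than a limsup: this relies on convergence of the sup-norm version of $\avol$, which itself is the content of the arithmetic Fujita approximation theorem.
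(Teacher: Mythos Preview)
Your proof is correct. Both your argument and the paper's start from the known fact that the sup-norm $\avol$ is a genuine limit, and both invoke Gromov's inequality $\Vert\cdot\Vert_{lg}\le Cl^{N}\Vert\cdot\Vert_{lg,L^2}$ for the norm comparison. The difference lies in how the upper bound is extracted. The paper absorbs the polynomial factor $Cl^{N}$ into an $\epsilon$-twist of the metric: for $l\gg 1$ one has $\aH_{L^2}(X,l\overline{D})\subseteq\aH(X,l(\overline{D}+(0,\epsilon)))$, whence $\limsup_{l}\log\#\aH_{L^2}(X,l\overline{D})/(l^d/d!)\le\avol(\overline{D}+(0,\epsilon))$, and then lets $\epsilon\to 0$ using the continuity of $\avol$. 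You instead bound the lattice-point discrepancy directly via the dilation inequality $\#(L\cap\mu B)\le(2\lceil\mu\rceil+1)^{\rank L}\,\#(L\cap B)$ combined with $r(l)=O(l^{d-1})$.

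Your route is slightly more self-contained, since it avoids invoking the continuity of $\avol$ (itself a nontrivial input from \cite{MoArZariski}); the paper's route is the standard manoeuvre in this subject and makes the role of the metric perturbation transparent. One small remark: your parenthetical justification of the dilation inequality (``covering $\mu B$ with translates of $B$ by lattice vectors'') is not literally correct when $B$ is small relative to the lattice, since lattice translates of $B$ need not cover space. The inequality is nonetheless true and standard; a clean argument is that the balls $x+\tfrac{1}{2}B$ for $x\in L\cap nB$ all lie in $(n+\tfrac{1}{2})B$ and any point is covered with multiplicity at most $\#(L\cap B)$, so comparing volumes gives $\#(L\cap nB)\le(2n+1)^{\rank L}\,\#(L\cap B)$.
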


\begin{proof}
First of all, note that
\[
\avol(\overline{D}) = \lim_{l\to\infty} \frac{\log \# \aH(X, l\overline{D})}{l^d/d!}
\]
(cf. \cite[Theorem~5.2.2]{MoArZariski}).
Since $\aH(X, l\overline{D}) \subseteq \aH_{L^2}(X, l\overline{D})$, we have
\[
\avol(\overline{D}) \leq \liminf_{l\to\infty} \frac{\log \# \aH_{L^2}(X, l\overline{D})}{l^d/d!}.
\]
On the other hand, by using Gromov's inequality (cf. \cite[Proposition~3.1.1]{MoArZariski}), 
there is a constant $C$ such that $\Vert\cdot\Vert_{\sup} \leq Cl^{d-1}\Vert\cdot\Vert_{L^2}$
on $H^0(X, lD)$.
Thus, for any positive number $\epsilon$,
$\Vert\cdot\Vert_{\sup} \leq \exp(l\epsilon/2) \Vert\cdot\Vert_{L^2}$ holds for $l \gg 1$.
This implies that 
\[
\aH_{L^2}(X, l\overline{D}) \subseteq \aH(X, l(\overline{D} + (0, \epsilon)))
\]
for $l \gg 1$, which yields
\[
 \limsup_{l\to\infty} \frac{\log \# \aH_{L^2}(X, l\overline{D})}{l^d/d!} \leq \avol(\overline{D} + (0, \epsilon)).
\]
Therefore, by virtue of the continuity of $\avol$, we have
\[
 \limsup_{l\to\infty} \frac{\log \#\aH_{L^2}(X, l\overline{D})}{l^d/d!} \leq \avol(\overline{D}),
\]
and hence the lemma follows.
\end{proof}

\begin{Lemma}
\label{lem:integral:formula:lattice:points}
Let $\Theta$ be a compact convex set in $\RR^n$ such that $\operatorname{vol}(\Theta) > 0$.
For each $l \in \ZZ_{\geq 1}$, let $A_l = (a_{\pmb{e},\pmb{e}'})_{\pmb{e},\pmb{e}' \in l\Theta \cap \ZZ^n}$ be
a positive definite symmetric real matrix indexed by $l\Theta \cap \ZZ^n$, and let $K_l$ be a subset of 
$\RR^{l\Theta \cap \ZZ^n} \simeq \RR^{\#(l\Theta \cap \ZZ^n)}$ given by
\[
K_l = \left\{ (x_{\pmb{e}}) \in \RR^{l\Theta \cap \ZZ^n} \ \left|\  \sum_{\pmb{e},\pmb{e}' \in l \Theta \cap \ZZ^n} a_{\pmb{e},\pmb{e}'}x_{\pmb{e}}x_{\pmb{e}'} \leq 1 \right\}\right. .
\]
We assume that there are positive constants $C$ and $D$ and a continuous function $\varphi : \Theta \to \RR$ such that
\[
\left| \log \left(\frac{1}{a_{\pmb{e},\pmb{e}}}\right) - l \varphi\left(\frac{\pmb{e}}{l}\right) \right| \leq C \log(l) + D
\]
for all $l \in \ZZ_{\geq 1}$ and $\pmb{e} \in l\Theta \cap \ZZ^n$.
Then we have
\[
\liminf_{l\to\infty} \frac{\log \# (K_l \cap \ZZ^{l\Theta \cap \ZZ^n})}{l^{n+1}} \geq \frac{1}{2} \int_\Theta \varphi(\pmb{x}) d\pmb{x}.
\]
Moreover, if $A_l$ is diagonal and all entries of $A_l$ are less than or equal to $1$ \rom{(}i.e.,
$a_{\pmb{e},\pmb{e}'} \leq 1$ $\forall \pmb{e},\pmb{e}'  \in l\Theta \cap \ZZ^n$\rom{)} for each $l$, then
\[
\lim_{l\to\infty} \frac{\log \# (K_l \cap \ZZ^{l\Theta \cap \ZZ^n})}{l^{n+1}} = \frac{1}{2} \int_\Theta \varphi(\pmb{x}) d\pmb{x}.
\]
\end{Lemma}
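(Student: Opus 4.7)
The plan is to establish the two bounds separately: a $\liminf$ bound coming from a Minkowski-type volume estimate on the general ellipsoid $K_l$, and in the diagonal case a matching $\limsup$ bound from inclusion of $K_l$ in a coordinate box. The bridge between the volumes and the $\varphi$-values is Hadamard's inequality combined with the hypothesized estimate on the diagonal entries, and the final passage to the integral is the standard Riemann-sum convergence for continuous functions on a compact Jordan-measurable set.

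For the lower bound I would first record that $K_l$ is a centered ellipsoid of volume $V_{m_l}/\sqrt{\det A_l}$, where $V_r$ denotes the Euclidean ball volume in $\RR^r$ and $m_l := \#(l\Theta \cap \ZZ^n) = O(l^n)$. Applying the classical geometry-of-numbers bound $\log\#(K\cap\ZZ^m) \geq \log\operatorname{vol}(K) - m\log 2$ for symmetric convex bodies (interpreting the left side as $\geq 0$ when the right side is negative) together with Hadamard's inequality $\det A_l \leq \prod_{\pmb{e}} a_{\pmb{e},\pmb{e}}$ gives
\[
\log\#(K_l \cap \ZZ^{m_l}) \geq \log V_{m_l} + \tfrac{1}{2}\sum_{\pmb{e}\in l\Theta \cap \ZZ^n}\log(1/a_{\pmb{e},\pmb{e}}) - m_l\log 2.
\]
Substituting the hypothesis $\log(1/a_{\pmb{e},\pmb{e}}) \geq l\varphi(\pmb{e}/l) - (C\log l + D)$, dividing by $l^{n+1}$, and invoking Stirling's estimate $\log V_r = O(r\log r)$, every error contribution vanishes in the limit and the main term becomes the Riemann sum
\[
\frac{1}{l^n}\sum_{\pmb{x}\in \Theta\cap(1/l)\ZZ^n}\varphi(\pmb{x}) \longrightarrow \int_\Theta \varphi(\pmb{x})\,d\pmb{x},
\]
which yields the first assertion.

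For the matching upper bound, in the diagonal case with $a_{\pmb{e},\pmb{e}}\leq 1$ the ellipsoid factors and
\[
K_l \subseteq \prod_{\pmb{e} \in l\Theta \cap \ZZ^n}\bigl[-1/\sqrt{a_{\pmb{e},\pmb{e}}},\ 1/\sqrt{a_{\pmb{e},\pmb{e}}}\bigr],
\]
so $\#(K_l\cap\ZZ^{m_l}) \leq \prod_{\pmb{e}}(2/\sqrt{a_{\pmb{e},\pmb{e}}}+1) \leq \prod_{\pmb{e}}3/\sqrt{a_{\pmb{e},\pmb{e}}}$, where the last step uses $1/\sqrt{a_{\pmb{e},\pmb{e}}}\geq 1$ to replace $2t+1$ by $3t$. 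Taking logs, dividing by $l^{n+1}$, and applying the hypothesis in the opposite direction produces the same Riemann sum plus a term $m_l\log 3/l^{n+1}\to 0$, giving the matching $\limsup$; combined with the lower bound, this proves the claimed equality.

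The step I expect to be the main annoyance is justifying the lower bound $\log\#(K\cap\ZZ^m) \geq \log\operatorname{vol}(K)-m\log 2$: it is only informative once $\operatorname{vol}(K)\geq 2^m$, so when the right side is negative I fall back on the trivial $\#(K\cap\ZZ^m)\geq 1$, and when it is positive I invoke van der Corput's strengthening of Minkowski's first theorem. A minor additional care is checking Jordan-measurability of $\Theta$ (automatic from compact convexity with positive volume) so that the Riemann sums over $\Theta\cap(1/l)\ZZ^n$ indeed converge to $\int_\Theta\varphi$. The absence of any control on off-diagonal entries of $A_l$ is precisely why only the $\liminf$ half survives in the general case; the diagonal hypothesis is used only to convert the box inclusion into a product of intervals with the right one-dimensional count.
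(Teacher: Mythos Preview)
Your proposal is correct and follows essentially the same route as the paper's proof: the Minkowski/van der Corput volume bound combined with Hadamard's inequality for the $\liminf$, the coordinate-box inclusion with the $2t+1\leq 3t$ trick for the diagonal $\limsup$, and the Riemann-sum passage to $\int_\Theta\varphi$. Your additional remarks on van der Corput versus Minkowski and on Jordan measurability of $\Theta$ are points the paper leaves implicit, so if anything you are slightly more careful.
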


\begin{proof}
By Minkowski's theorem,
\[
\log \# (K_l \cap \ZZ^{l \Theta \cap \ZZ^n}) \geq \log(\operatorname{vol}(K_l)) - m_l \log(2),
\]
where $m_l = \#(l  \Theta \cap \ZZ^n)$.
Note that
\[
 \log(\operatorname{vol}(K_l)) = - \frac{1}{2} \log (\det(A_l))  
+ \log V_{m_l},
\]
where $V_r = \operatorname{vol}( \{(x_{1}, \ldots, x_{r}) \in \RR^{r} \mid 
x_1^2 + \cdots + x_r^2 \leq 1\})$. Moreover, by Hadamard's inequality, 
\[
\det(A_l) \leq \prod_{\pmb{e} \in l \Theta \cap \ZZ^n} a_{\pmb{e},\pmb{e}}.
\]
Thus
\[
\log \# (K_l \cap \ZZ^{l\Theta \cap \ZZ^n})
\geq  \frac{1}{2} \sum_{\pmb{e} \in l\Theta \cap \ZZ^n}\log \left( \frac{1}{a_{\pmb{e},\pmb{e}}} \right)
+ \log V_{m_l} - m_l \log(2).
\]
Further, there is a positive constant $c_1$ such that $m_l \leq c_1 l^n$ for $l \geq 1$. Thus we can see
\[
\lim_{l\to\infty} \log (V_{m_l})/l^{n+1} = 0.
\]
Therefore, it is sufficient to show that
\[
\lim_{l\to\infty} \frac{1}{l^{n+1}} \sum_{\pmb{e} \in l\Theta \cap \ZZ^n}
 \log \left( \frac{1}{a_{\pmb{e},\pmb{e}}} \right) =   \int_{\Theta} \varphi(\pmb{x}) d\pmb{x}.
\]
By our assumption, we have
\[
\varphi\left(\frac{\pmb{e}}{l}\right) - \frac{1}{l}(C \log l + D) \leq
\frac{1}{l}  \log \left(  \frac{1}{a_{\pmb{e},\pmb{e}}} \right) \leq
\varphi\left(\frac{\pmb{e}}{l}\right) +  \frac{1}{l}(C \log l + D).
\]
Note that
\[
\lim_{l\to\infty} \frac{1}{l^{n}} \sum_{\pmb{e} \in l \Theta \cap \ZZ^n} \varphi\left(\frac{\pmb{e}}{l}\right) =
\lim_{l\to\infty}  \sum_{\pmb{x} \in \Theta \cap (1/l) \ZZ^n} \varphi(\pmb{x}) \frac{1}{l^{n}} =
\int_{\Theta} \varphi(\pmb{x}) d\pmb{x}.
\]
On the other hand,
since $m_l \leq c_1 l^n$, we can see
\[
 \lim_{l\to\infty} \sum_{\pmb{e} \in l\Theta \cap \ZZ^n} \frac{1}{l^{n+1}}(C \log l + D) = 0.
\]
Thus the first assertion follows.

\medskip
Next we assume that $A_l$ is diagonal for each $l$.
Then, since
\[
K_l  \subseteq  \prod_{\pmb{e} \in l\Theta \cap \ZZ^n} \left[ -\sqrt{\frac{1}{a_{\pmb{e},\pmb{e}}}},
\sqrt{\frac{1}{a_{\pmb{e},\pmb{e}}}} \ \right],
\]
we have
\[
\log \# (K_l \cap \ZZ^{l\Theta \cap \ZZ^n})
\leq \sum_{\pmb{e} \in l\Theta \cap \ZZ^n} \log \left( 2\sqrt{\frac{1}{a_{\pmb{e},\pmb{e}}}}+ 1\right).
\]
Thus
\[
\log \# (K_l \cap \ZZ^{l\Theta \cap \ZZ^n})
\leq \frac{1}{2} \sum_{\pmb{e} \in l\Theta \cap \ZZ^n}\log \left( \frac{1}{a_{\pmb{e},\pmb{e}}}\right)  + m_l\log(3)
\]
because $a_{\pmb{e},\pmb{e}} \leq 1$ and $2 t + 1 \leq 3t$ for $t \geq 1$.
Therefore, as before,
\[
\limsup_{l\to\infty} \frac{\log \# (K_l \cap \ZZ^{l\Theta \cap \ZZ^n})}{l^{n+1}} \leq \frac{1}{2} \int_\Theta \varphi(\pmb{x}) d\pmb{x}.
\]
\end{proof}

From now on,
we use the same notation as in Section~\ref{sec:fund:prop:func}.
The purpose of this section is to prove the following theorem:

\begin{Theorem}
\label{thm:positivity:D:a:b}
\begin{enumerate}
\renewcommand{\labelenumi}{(\arabic{enumi})}
\item \rom{(}Integral formula\rom{)}\quad
The following formulae hold:
\begin{align*}
\hspace{4em}&\avol(\overline{D}_{\pmb{a}}) = \frac{(n+1)!}{2} \int_{\Theta_{\pmb{a}}} \varphi_{\pmb{a}}(1-x_1 - \cdots - x_n, x_1, \ldots, x_n) 
dx_1 \cdots dx_n, \\
\intertext{and}
& \adeg(\overline{D}_{\pmb{a}}^{n+1}) = \frac{(n+1)!}{2} \int_{\Delta_n} \varphi_{\pmb{a}}(1-x_1 - \cdots - x_n, x_1, \ldots, x_n) 
dx_1 \cdots dx_n.
\end{align*}

\item
$\overline{D}_{\pmb{a}}$ is ample if and only if $\pmb{a}(i) > 1$ for all $i=0, \ldots, n$.

\item
$\overline{D}_{\pmb{a}}$ is nef if and only if $\pmb{a}(i) \geq 1$ for all $i=0, \ldots, n$.

\item
$\overline{D}_{\pmb{a}}$ is big if and only if $\vert \pmb{a} \vert > 1$.

\item
$\overline{D}_{\pmb{a}}$ is pseudo-effective if and only if $\vert \pmb{a} \vert \geq 1$.

\item
If $\vert \pmb{a} \vert = 1$, then
\[
\aH(\PP^n_{\ZZ}, l\overline{D}_{\pmb{a}}) =
\begin{cases}
\{ 0, \pm z_1^{l\pmb{a}(1)} \cdots z_n^{l\pmb{a}(n)} \} & \text{if $l\pmb{a} \in \ZZ$}, \\
\{ 0 \} & \text{if $l\pmb{a} \not\in \ZZ$}.
\end{cases}
\] 

\item
$\adeg(\overline{D}_{\pmb{a}}^{n+1}) = \avol(\overline{D}_{\pmb{a}})$ if and only if
$\overline{D}_{\pmb{a}}$ is nef.
\end{enumerate}
\end{Theorem}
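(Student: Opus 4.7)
Item (1) is the engine of the theorem: once the two integral formulas are in hand, items (2)--(7) will follow from the elementary properties of $\varphi_{\pmb{a}}$ established in Section~\ref{sec:fund:prop:func}. For the $\avol$-formula I would pass from $\aH$ to $\aH_{L^2}$ via Lemma~\ref{lem:vol:L:2}, since the $L^2$ unit ball is far cleaner to count: by Proposition~\ref{prop:cal:inner:product} the monomials $z^{\pmb{e}}$ are $L^2$-orthogonal with $\langle z^{\pmb{e}},z^{\pmb{e}}\rangle_{lg_{\pmb{a}}}=1/(\binom{n+l}{n}\binom{l}{\widetilde{\pmb{e}}^l}\pmb{a}^{\widetilde{\pmb{e}}^l})$, and by Lemma~\ref{lem:stirling} this matches $e^{-l\varphi_{\pmb{a}}(\widetilde{\pmb{e}}^l/l)}$ up to polynomial factors in $l$. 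Proposition~\ref{prop:criterion:QQ:effective}~(2) identifies the integer span of $\aH(l\overline{D}_{\pmb{a}})$ as the sub-lattice indexed by $l\Theta_{\pmb{a}}\cap\ZZ^n$; restricting Lemma~\ref{lem:integral:formula:lattice:points} to this sub-lattice gives both the Minkowski lower bound and the diagonal, entries-$\leq 1$ upper bound, assembled into $\avol(\overline{D}_{\pmb{a}})=\frac{(n+1)!}{2}\int_{\Theta_{\pmb{a}}}\varphi_{\pmb{a}}$ by sandwiching $\aH_{L^2}(l\overline{D}_{e^{-\epsilon}\pmb{a}})\subseteq\aH(l\overline{D}_{\pmb{a}})\subseteq\aH_{L^2}(l\overline{D}_{\pmb{a}})$ for $l\gg 1$ and sending $\epsilon\to 0$ using continuity of $\avol$.

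For the $\adeg$-formula I would compute $\partial\adeg(\overline{D}_{\pmb{a}}^{n+1})/\partial a_i$ by varying the Green function $g_{\pmb{a}}$ through the standard identity $(n+1)\int(\partial g_{\pmb{a}}/\partial a_i)\,\omega_{\pmb{a}}^n$. After the change of variables $s_i:=a_i\vert z_i\vert^2/h_{\pmb{a}}$ (with $s_0:=a_0/h_{\pmb{a}}=1-s_1-\cdots-s_n$), the explicit form of $\omega_{\pmb{a}}^n$ given by Proposition~\ref{prop:calculation:volume:form}~(2) transfers the integral to $\Delta_n$, where $\int_{\Delta_n} s_i\,d\pmb{s}=1/(n+1)!$ by symmetry. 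Integrating the resulting separable ODE and fixing the constant at $\pmb{a}=\pmb{1}$ (by either a direct archimedean computation on the Fubini-Study $\PP^n_{\ZZ}$ or by iterated arithmetic intersection, restricting to coordinate hyperplanes where $\overline{D}_{\pmb{a}}$ becomes an instance of $\overline{D}_{\pmb{a}'}$ on $\PP^{n-1}_{\ZZ}$ and inducting on $n$) yields $\adeg(\overline{D}_{\pmb{a}}^{n+1})=\frac{(n+1)!}{2}\int_{\Delta_n}\varphi_{\pmb{a}}(\widetilde{\pmb{s}})\,d\pmb{s}$.

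With (1) in hand, the remaining items are short. For (3), write $\overline{D}_{\pmb{a}}=\overline{D}_{\pmb{1}}+(0,g_{\pmb{a}}-g_{\pmb{1}})$: when $a_i\geq 1$ for all $i$, the minimum of $g_{\pmb{a}}-g_{\pmb{1}}$ equals $\log\min_i a_i\geq 0$, so $\overline{D}_{\pmb{a}}$ is nef as a sum of the ample Fubini-Study divisor and an effective archimedean arithmetic divisor; conversely, restricting to the horizontal line $\overline{\{T_j=0:j\neq 0,i\}}\simeq\PP^1_{\ZZ}$ produces an instance of $\overline{D}_{(a_0,a_i)}$ whose arithmetic degree (computed via (1)) becomes negative when $a_i<1$. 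Item (2) is the open analog using strict positivity of $\omega_{\pmb{a}}$. Items (4)--(5) follow from the integral formula together with Lemma~\ref{lem:characteristic:fun:max}: $\varphi_{\pmb{a}}$ attains its maximum $\log\vert\pmb{a}\vert$ on $\Delta_n$, so $\Theta_{\pmb{a}}$ has positive measure iff $\vert\pmb{a}\vert>1$, and pseudo-effectivity is the $\epsilon\to 0$ limit via $\overline{D}_{\pmb{a}}+(0,\epsilon)=\overline{D}_{e^\epsilon\pmb{a}}$. For (6), when $\vert\pmb{a}\vert=1$ Lemma~\ref{lem:characteristic:fun:max} forces $\Theta_{\pmb{a}}$ to be the single point $(a_1,\ldots,a_n)$, so Proposition~\ref{prop:criterion:QQ:effective} reduces $\aH(l\overline{D}_{\pmb{a}})$ to $\{0,\pm z^{l\pmb{a}}\}$ when $l\pmb{a}\in\ZZ^{n+1}$ (the norm being $\exp(-l\varphi_{\pmb{a}}(\pmb{a}))=1$) and to $\{0\}$ otherwise. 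Finally (7): $\varphi_{\pmb{a}}<0$ strictly on $\Delta_n\setminus\Theta_{\pmb{a}}$, so equality of the two integrals forces this complement to have Lebesgue measure zero, hence $\Theta_{\pmb{a}}=\Delta_n$, which evaluated at the vertices is equivalent to $a_i\geq 1$ for all $i$.

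The main obstacle is pinning the correct integration domain $\Theta_{\pmb{a}}$ (rather than the larger $\Delta_n$) in the $\avol$-formula: Minkowski's lower bound on the sub-lattice is clean, but the upper bound must exclude spurious contributions from lattice points $\pmb{e}$ with $\pmb{e}/l$ just outside $\Theta_{\pmb{a}}$ where the $L^2$-norm remains small thanks to the $\binom{n+l}{n}$ factor. The $\epsilon$-perturbation to $e^\epsilon\pmb{a}$, combined with $\Theta_{e^\epsilon\pmb{a}}\downarrow\Theta_{\pmb{a}}$ and continuity of the integrand in $\pmb{a}$, is what pays for this.
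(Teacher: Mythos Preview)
Your argument for the $\avol$-formula is essentially the paper's, but the closing worry about an $\epsilon$-perturbation is unnecessary: once Proposition~\ref{prop:criterion:QQ:effective} confines $\aH(l\overline{D}_{\pmb{a}})$ to the sublattice indexed by $l\Theta_{\pmb{a}}\cap\ZZ^n$, the sandwich
\[
\aH(l\overline{D}_{\pmb{a}})\ \subseteq\ \Bigl\{\phi\in\bigoplus_{\pmb{e}\in l\Theta_{\pmb{a}}\cap\ZZ^n}\ZZ z^{\pmb{e}}:\langle\phi,\phi\rangle_{lg_{\pmb{a}}}\le 1\Bigr\}\ \subseteq\ \aH_{L^2}(l\overline{D}_{\pmb{a}})
\]
together with Lemma~\ref{lem:vol:L:2} and Lemma~\ref{lem:integral:formula:lattice:points} closes the squeeze directly. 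No lattice points outside $l\Theta_{\pmb{a}}$ ever enter the count.

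Your route to the $\adeg$-formula is genuinely different from the paper's. The paper invokes the arithmetic Hilbert--Samuel formula $\adeg(\overline{D}_{\pmb{a}}^{n+1})/(n+1)!=\lim_l\hat\chi(H^0(lH_0),\langle\,,\,\rangle_{lg_{\pmb{a}}})/l^{n+1}$ and then reuses the same diagonal $L^2$-basis computation over $\Delta_n$ rather than $\Theta_{\pmb{a}}$. Your variational approach (differentiate in $a_i$, change variables to $\Delta_n$, integrate back) is valid and arguably more elementary, but it trades Hilbert--Samuel for a separate computation of the Fubini--Study self-intersection at $\pmb{a}=\pmb{1}$ as boundary condition; the paper's method is self-contained in this respect.

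There is a genuine gap in your converse for (3). Restricting to the coordinate $\PP^1_{\ZZ}$ with coordinates $T_0,T_i$ gives $\overline{D}_{(a_0,a_i)}$, but ``its arithmetic degree computed via (1)'' is the self-intersection $\adeg(\overline{D}_{(a_0,a_i)}^{2})=\int_0^1\varphi_{(a_0,a_i)}(1-x,x)\,dx$, which need not be negative when $a_i<1$ (take $a_0$ large). Nefness is tested on one-dimensional subschemes, not on self-intersections over a surface. The paper instead restricts to the horizontal curve $\gamma_i=\bigcap_{j\ne i}H_j\simeq\Spec\ZZ$ and computes $\adeg(\overline{D}_{\pmb{a}}\vert_{\gamma_i})=\tfrac12\log a_i$ directly. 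Your ``open analog'' for (2) is also too compressed: positivity of $\omega_{\pmb{a}}$ alone does not give ampleness; the paper checks that every monomial $z^{\pmb{e}}$ is a strictly small section when all $a_i>1$, which is what is needed. The remaining items (4)--(7) are fine and match the paper's reasoning.
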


\begin{proof}
First let us see the essential case of (1):

\begin{Claim}
\label{claim:thm:positivity:D:a:b:1}
If $\vert \pmb{a} \vert  > 1$, then ${\displaystyle \avol(\overline{D}_{\pmb{a}}) = \frac{(n+1)!}{2} \int_{\Theta_{\pmb{a}}} \varphi_{\pmb{a}}(\widetilde{\pmb{t}}) d\pmb{t}}$.
\end{Claim}

\begin{proof}
In this case, $\operatorname{vol}(\Theta_{\pmb{a}}) > 0$.
By using Proposition~\ref{prop:criterion:QQ:effective}, 
\[
\aH(\PP^n_{\ZZ}, l\overline{D}_{\pmb{a}}) \subseteq \left.\left\{ \phi \in \bigoplus_{\pmb{e} \in l  \Theta_{\pmb{a}} \cap \ZZ^n} \ZZ z^{\pmb{e}} \ \right| \ 
\langle \phi, \phi \rangle_{lg_{\pmb{a}}} \leq 1 \right\}
\subseteq  \aH_{L^2}(\PP^n_{\ZZ}, l\overline{D}_{\pmb{a}}),
\]
which yields
\[
\avol(\overline{D}_{\pmb{a}}) = (n+1)!  \lim_{l\to\infty} \frac{\log \#\left.\left\{ \phi \in \bigoplus_{\pmb{e} \in l  \Theta_{\pmb{a}} \cap \ZZ^n}
 \ZZ z^{\pmb{e}} \ \right| \ 
\langle \phi, \phi \rangle_{lg_{\pmb{e}}} \leq 1 \right\}}{l^{n+1}}
\]
by Lemma~\ref{lem:vol:L:2}.
We set
\[
K_l = \left\{ (x_{\pmb{e}}) \in \RR^{l  \Theta_{\pmb{a}} \cap \ZZ^n}\ \left| \  
\sum_{\pmb{e}  \in l  \Theta_{\pmb{a}} \cap \ZZ^n} \frac{x_{\pmb{e}}^2}{\binom{l+n}{n} \binom{l}{\widetilde{\pmb{e}}^l}\pmb{a}^{\widetilde{\pmb{e}}^l}} \leq 1 \right\}\right. .
\]
Then, by Proposition~\ref{prop:cal:inner:product},
\[
\#  \left.\left\{ \phi \in \bigoplus_{\pmb{e}  \in l  \Theta_{\pmb{a}} \cap \ZZ^n} \ZZ z^{\pmb{e}} \ \right| \ 
\langle \phi, \phi \rangle_{lg_{\pmb{a}}} \leq 1 \right\}
= \# (K_l \cap \ZZ^{l  \Theta_{\pmb{a}} \cap \ZZ^n}).
\]
On the other hand,
for $\pmb{e} \in l  \Theta_{\pmb{a}} \cap \ZZ^n$,
\[
\binom{l+n}{n} \binom{l}{\widetilde{\pmb{e}}^l}\pmb{a}^{\widetilde{\pmb{e}}^l}  = \frac{1}{\langle z^{\pmb{e}}, z^{\pmb{e}} \rangle_{lg_{\pmb{a}}}} \geq \exp(l\varphi_{\pmb{a}}(\widetilde{\pmb{e}}^l/l)) \geq 1.
\]
Moreover, by Lemma~\ref{lem:stirling}, there are positive constants $A$ and $B$ such that
\[
\left| \log \left(  \binom{l+n}{n} \binom{l}{\widetilde{\pmb{e}}^l}\pmb{a}^{\widetilde{\pmb{e}}^l} \right) - l \varphi_{\pmb{a}}(\widetilde{\pmb{e}}^l/l) \right|
\leq A \log l + B
\]
holds for all $l \in \ZZ_{\geq 1}$ and $\pmb{e}\in l  \Theta_{\pmb{a}} \cap \ZZ^n$.
Thus the assertion follows from Lemma~\ref{lem:integral:formula:lattice:points}.
\end{proof}

Next let us see the following claim:
\begin{Claim}
\label{claim:thm:positivity:D:a:b:2}
If $s, t \in \RR_{>0}$ and $\alpha, \beta \in \RR$ with $\alpha + \beta \not= 0$,
then
\[
\alpha \overline{D}_{t\pmb{a}} + \beta \overline{D}_{s\pmb{a}} = (\alpha + \beta)\overline{D}_{(t^{\alpha}s^{\beta})^{\frac{1}{\alpha + \beta}}\pmb{a}}.
\]
\end{Claim}

\begin{proof}
This is a straightforward calculation.
\end{proof}

\medskip
(2) and (3):\quad
First of all, $\omega_{\pmb{a}}$ is positive by Proposition~\ref{prop:calculation:volume:form}.
Let $\gamma_i$ be a $1$-dimensional closed subscheme given by
$H_0 \cap \cdots \cap H_{i-1} \cap H_{i+1} \cap \cdots \cap H_n$.
Then it is easy to see that $\adeg(\rest{\overline{D}_{\pmb{a}}}{\gamma_i}) = (1/2)\log(\pmb{a}(i))$.
Therefore we have ``only if'' part of (1) and (2).

We assume that $\pmb{a}(i) > 1$ for all $i$.
Then $\varphi_{\pmb{a}}$ is positive on 
\[
\{ (x_0, \ldots, x_n) \in \RR_{\geq 0}^{n+1} \mid x_0 + \cdots + x_n = 1 \}.
\]
Thus, for $\pmb{e} \in \ZZ_{\geq 0}^n$ with $\vert \pmb{e} \vert \leq 1$,
$z^{\pmb{e}}$ is a strictly small section by Proposition~\ref{prop:cal:inner:product}, which shows that
$\overline{D}_{\pmb{a}}$ is ample.

Next we assume that $\pmb{a}(i) \geq 1$ for all $i$.
Let $\gamma$ be a $1$-dimensional closed integral subscheme of $\PP^n_{\ZZ}$.
Then we can find $H_i$ such that $\gamma \not\subseteq H_i$.
Note that
\[
\overline{D}_{\pmb{a}} + \widehat{(z_i)} =
(H_i, \log(\pmb{a}(0) \vert w_0 \vert^2 + \cdots + \pmb{a}(n) \vert w_n \vert^2)),
\]
where $w_k = T_k/T_i$ ($k=0, \ldots, n$).
Therefore $\adeg(\rest{\overline{D}_{\pmb{a}}}{\gamma}) \geq 0$ because 
\[
\log(\pmb{a}(0) \vert w_0 \vert^2 + \cdots + \pmb{a}(n) \vert w_n \vert^2) \geq 0.
\]

\medskip
(6):\quad
In this case, $\Theta_{\pmb{a}} = \{ (\pmb{a}(1), \ldots, \pmb{a}(n)) \}$ and $\varphi_{\pmb{a}}(\pmb{a}) =  0$ by Lemma~\ref{lem:characteristic:fun:max}.
Moreover, if $l \pmb{a} \in \ZZ^{n+1}$, then
\[
\Vert z^{l(\pmb{a}(1), \ldots, \pmb{a}(n))} \Vert_{lg_{\pmb{a}}}^2 = \exp(-l \varphi_{\pmb{a}}(\pmb{a})) = 1
\]
by Proposition~\ref{prop:cal:inner:product}.
Thus the assertion follows from Proposition~\ref{prop:criterion:QQ:effective}.

\medskip
(4) and (5):\quad
By using (6), in order to see (4) and (5), it is sufficient to show the following:

\begin{enumerate}
\renewcommand{\labelenumi}{(\roman{enumi})}
\item
$\overline{D}_{\pmb{a}}$ is big if $\vert \pmb{a} \vert > 1$.

\item
$\overline{D}_{\pmb{a}}$ is pseudo-effective if $\vert \pmb{a} \vert \geq 1$.

\item
$\overline{D}_{\pmb{a}}$ is not pseudo-effective if $\vert \pmb{a} \vert < 1$.
\end{enumerate}

\medskip
(i) It follows from Claim~\ref{claim:thm:positivity:D:a:b:1} because $\operatorname{vol}(\Theta_{\pmb{a}}) > 0$.

\medskip
(ii) We choose a real number $t$ such that $t > 1$ and $\overline{D}_{t\pmb{a}}$ is ample.
By Claim~\ref{claim:thm:positivity:D:a:b:2},
\[
\overline{D}_{\pmb{a}} + \epsilon \overline{D}_{t\pmb{a}} = (1 + \epsilon) \overline{D}_{t^{\frac{\epsilon}{1 + \epsilon}}\pmb{a}}.
\]
For any $\epsilon > 0$,
since $t^{\frac{\epsilon}{1 + \epsilon}}\vert \pmb{a} \vert > 1$, 
$(1 + \epsilon) \overline{D}_{t^{\frac{\epsilon}{1 + \epsilon}}\pmb{a}}$ is big by (i),
which shows that $\overline{D}_{\pmb{a}}$ is pseudo-effective.

\medskip
(iii) 
Let us choose a positive real number $t$ such that $\overline{D}_{t\pmb{a}}$ is ample.
We also choose a positive number $\epsilon$ such that if we set $\pmb{a}' =  t^{\frac{\epsilon}{1 + \epsilon}}\pmb{a}$,
then $\vert \pmb{a}'\vert < 1$.
We assume that $\overline{D}_{\pmb{a}}$ is pseudo-effective. Then 
\[
\overline{D}_{\pmb{a}} + \epsilon \overline{D}_{t\pmb{a}} = (1 + \epsilon) \overline{D}_{\pmb{a}'}
\]
is big
by \cite[Proposition~6.3.2]{MoArZariski}, which means that $\overline{D}_{\pmb{a}'}$ is big.
On the other hand, as $\vert \pmb{a}'\vert < 1$, we have
$\Theta_{\pmb{a}'} = \emptyset$.
Thus $\aH(\PP^n_{\ZZ}, n\overline{D}_{\pmb{a}'}) = \{ 0 \}$
for all $n \geq 1$ by Proposition~\ref{prop:criterion:QQ:effective}. This is a contradiction.

\bigskip
(1): For the first formula, 
we may assume that $\vert \pmb{a} \vert \leq 1$ by Claim~\ref{claim:thm:positivity:D:a:b:1}.
In this case,
$\overline{D}_{\pmb{a}}$ is not big by (4) and
$\Theta_{\pmb{a}}$ is either $\emptyset$ or $\{ (a_1, \ldots, a_n) \}$.
Thus the assertion follows.
For the second formula,
the arithmetic Hilbert-Samuel formula (cf. \cite{GSRR} and \cite{AbBo}) yields
\[
\frac{\adeg(\overline{D}_{\pmb{a}}^{n+1})}{(n+1)!} =
\lim_{l\to\infty} \frac{\widehat{\chi}\left(H^0(\PP^n_{\ZZ}, l H_0), \langle\ ,\ \rangle_{lg_a} \right)}{l^{n+1}}.
\]
On the other hand,
\[
\widehat{\chi}\left(H^0(\PP^n_{\ZZ}, l H_0), \langle\ ,\ \rangle_{lg_a} \right) = 
\sum_{\pmb{e} \in l  \Delta_n \cap \ZZ^n}\log \left( \sqrt{\binom{l+n}{n} \binom{l}{\widetilde{\pmb{e}}^l}\pmb{a}^{\widetilde{\pmb{e}}^l}}\right)  + \log V_{\#(l  \Delta_n \cap \ZZ^n)}.
\]
Thus, in the same way as the proof of Lemma~\ref{lem:integral:formula:lattice:points} and
Claim~\ref{claim:thm:positivity:D:a:b:1}, we can see the second formula.

\medskip
(7): It follows from (1) and (3).
\end{proof}

\bigskip
Finally let us consider the following proposition:

\begin{Proposition}
For any positive integer $l$, there exists $\pmb{a} \in \QQ_{>0}^{n+1}$ such that
$\vert \pmb{a} \vert > 1$ and that
$\aH(\PP^n_{\ZZ}, k \overline{D}_{\pmb{a}}) = \{ 0 \}$ for $k=1, \ldots, l$.
\end{Proposition}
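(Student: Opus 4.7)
The plan is to use Proposition~\ref{prop:criterion:QQ:effective} to convert the desired vanishing $\aH(\PP^n_\ZZ, k\overline{D}_{\pmb{a}}) = \{0\}$ into the geometric condition $k\Theta_{\pmb{a}} \cap \ZZ^n = \emptyset$, and then to construct $\pmb{a}$ sitting just inside the big cone so that $\Theta_{\pmb{a}}$ is an arbitrarily small neighborhood of a well-chosen rational point whose first $l$ positive integer multiples all miss $\ZZ^n$.

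First I would fix a rational probability vector $\pmb{c} = (c_0, c_1, \ldots, c_n) \in \QQ_{>0}^{n+1}$ with $\vert \pmb{c} \vert = 1$ whose ``interior coordinates'' $\pmb{c}' := (c_1, \ldots, c_n)$ satisfy $k\pmb{c}' \notin \ZZ^n$ for $k = 1, \ldots, l$. A concrete choice is $c_i = 1/N$ for $i = 1, \ldots, n$ and $c_0 = 1 - n/N$, where $N$ is any integer with $N > \max(l, n)$: every coordinate of $k\pmb{c}'$ then equals $k/N$, which is non-integral for $1 \leq k \leq l$.

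Next I would set $\pmb{a} := t\pmb{c}$ for a rational $t > 1$ to be chosen later. Directly from the definition of $\varphi$,
\[
\varphi_{t\pmb{c}}(x_0, \ldots, x_n) = \varphi_{\pmb{c}}(x_0, \ldots, x_n) + (x_0 + \cdots + x_n)\log t,
\]
so on the simplex $\{x_0 + \cdots + x_n = 1\}$ one has $\varphi_{t\pmb{c}} = \varphi_{\pmb{c}} + \log t$, and hence
\[
\Theta_{\pmb{a}} = \{\pmb{x} \in \Delta_n \mid \varphi_{\pmb{c}}(\widetilde{\pmb{x}}) \geq -\log t\}.
\]
By Lemma~\ref{lem:characteristic:fun:max}, $\varphi_{\pmb{c}}(\widetilde{\pmb{x}})$ attains its maximum $0$ on the compact set $\Delta_n$ uniquely at $\pmb{c}'$, so these superlevel sets Hausdorff-converge to $\{\pmb{c}'\}$ as $t \to 1^+$ by a standard continuity--compactness argument.

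To conclude, since $k\pmb{c}' \notin \ZZ^n$ for each $k = 1, \ldots, l$, the quantity $\delta := \min_{1 \leq k \leq l} \dist(k\pmb{c}', \ZZ^n)/k$ is strictly positive. Picking rational $t > 1$ close enough to $1$ that $\Theta_{\pmb{a}}$ is contained in the Euclidean $\delta$-ball around $\pmb{c}'$ forces $k\Theta_{\pmb{a}}$ to lie in the $k\delta$-ball around $k\pmb{c}'$, which meets $\ZZ^n$ trivially for $k = 1, \ldots, l$. Bigness of $\overline{D}_{\pmb{a}}$ is immediate from Theorem~\ref{thm:positivity:D:a:b}(4), since $\vert \pmb{a} \vert = t > 1$. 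The only mildly delicate step is the Hausdorff convergence $\Theta_{t\pmb{c}} \to \{\pmb{c}'\}$, but this is a routine consequence of the uniqueness of the maximum of $\varphi_{\pmb{c}}$ on the compact set $\Delta_n$.
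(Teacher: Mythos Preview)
Your proof is correct and follows essentially the same approach as the paper: both start from a rational probability vector with small interior coordinates (the paper takes $a'_i<1/l$, you take $c_i=1/N$ with $N>\max(l,n)$), scale by a rational $t>1$ close to $1$, use the identity $\varphi_{t\pmb{c}}=\varphi_{\pmb{c}}+\log t$ together with the uniqueness of the maximum in Lemma~\ref{lem:characteristic:fun:max} and compactness of $\Delta_n$ to shrink $\Theta_{t\pmb{c}}$ toward the center, and then invoke Proposition~\ref{prop:criterion:QQ:effective}. The only cosmetic difference is that the paper confines $\Theta_{\pmb{a}}$ to the cube $(0,1/l)^n$ while you use a Euclidean $\delta$-ball; the paper's compactness contradiction is exactly your ``routine'' Hausdorff-convergence step.
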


\begin{proof}
Let us choose positive rational numbers $a'_1, \ldots, a'_n$ such that
$a'_1 + \cdots + a'_n < 1$ and $a'_1 < 1/l, \ldots, a'_n < 1/l$.
We set $a'_0 = 1 - a'_1 - \cdots - a'_n$ and $\pmb{a}' = (a'_0, \ldots, a'_n)$.
Moreover, for a rational number $\lambda > 1$, we set
\[
K_{\lambda} = \{ \pmb{x} \in \Delta_n \mid \varphi_{\pmb{a}'}(\widetilde{\pmb{x}}) + \log \lambda \geq 0 \},
\]
where $\Delta_n = \{(x_1, \ldots, x_n) \in \RR_{\geq 0}^n \mid x_1 + \cdots + x_n \leq 1 \}$.

\begin{Claim}
We can find a rational number $\lambda > 1$ such that
$K_{\lambda} \subseteq (0,1/l)^n$.
\end{Claim}

\begin{proof}
We assume that $K_{1+(1/m)} \not\subseteq (0,1/l)^n$ for all $m \in \ZZ_{\geq 1}$, that is,
we can find $\pmb{x}_m \in K_{1+(1/m)} \setminus (0,1/l)^n$ for each $m \geq 1$.
Since $\Delta_n$ is compact, there is a subsequence $\{ \pmb{x}_{m_i} \}$ of $\{ \pmb{x}_m \}$ such that
$\pmb{x} = \lim_{i\to\infty} \pmb{x}_{m_i}$ exists. Note that $\pmb{x} \not\in (0,1/l)^n$ because $\pmb{x}_{m_i} \not\in (0,1/l)^n$ for all $i$.
On the other hand, since $\varphi_{\pmb{a}'}(\widetilde{\pmb{x}}_{m_i}) + \log(1 + (1/m_i)) \geq 0$ for all $i$,
we have $\varphi_{\pmb{a}'}(\widetilde{\pmb{x}}) \geq 0$, and hence $\pmb{x} = (a'_1, \ldots, a'_n)$ by Lemma~\ref{lem:characteristic:fun:max}.
This is a contradiction.
\end{proof}

We choose a rational number $\lambda > 1$ as in the above claim.
Here we set $\pmb{a} = \lambda \pmb{a}'$. Then, as $\varphi_{\pmb{a}} = \varphi_{\pmb{a}'} + \log \lambda$,
we have $\Theta_{\pmb{a}} \subseteq (0,1/l)^n$.
We assume that $\aH(\PP^n_{\ZZ}, k \overline{D}_{\pmb{a}}) \not= \{ 0 \}$ for some $k$ with $1 \leq k \leq l$.
Then, by Proposition~\ref{prop:criterion:QQ:effective}, there is $\pmb{e}= (e_1, \ldots, e_n) \in k \Theta_{\pmb{a}} \cap \ZZ^n$, that is,
$\pmb{e}/k \in \Theta_{\pmb{a}}$. Thus $0 < e_i/k < 1/l$ for all $i$. This is a contradiction.
\end{proof}

\section{Asymptotic multiplicity}
Let $X$ be a $d$-dimensional, projective, generically smooth and normal arithmetic variety.
Let $\overline{D}$ be an arithmetic $\RR$-divisor of $C^0$-type on $X$.
We set
\[
N(\overline{D}) = \left\{ l \in \ZZ_{>0} \mid \aH(X, l\overline{D}) \not= \{ 0 \} \right\}.
\]
We assume that $N(\overline{D}) \not= \emptyset$.
Then $\mu_x(\overline{D})$ for $x \in X$ is defined to be
\[
\mu_x(\overline{D}) := \inf \left\{ \mult_x(D + (1/l)(\phi)) \mid l \in N(\overline{D}),\ \phi \in \aH(X, l\overline{D}) \setminus \{ 0 \} \right\},
\]
which is called the {\em asymptotic multiplicity of $\overline{D}$ at $x$}.
This definition is slightly different from the way in \cite[Subsection~6.5]{MoArZariski}, but
they give the same value if $\ah(X, \overline{D}) \not= 0$ (cf. Claim~\ref{claim:prop:mu:properties:1}).

\begin{Proposition}
\label{prop:mu:properties}
Let $\overline{D}$ and $\overline{E}$ be arithmetic $\RR$-divisors of $C^0$-type such that
$N(\overline{D}) \not= \emptyset$ and $N(\overline{E}) \not= \emptyset$.
Then we have the following:
\begin{enumerate}
\renewcommand{\labelenumi}{(\arabic{enumi})}
\item
$\mu_x(\overline{D} + \overline{E}) \leq \mu_x(\overline{D}) + \mu_x(\overline{E})$.

\item
If $\overline{D} \leq \overline{E}$, then $\mu_x(\overline{E}) \leq \mu_x(\overline{D}) + \mult_{x}(E - D)$.

\item
$\mu_x(\overline{D} + \widehat{(\phi)}) = \mu_x(\overline{D})$ for $\phi \in \Rat(X)^{\times}$.

\item
$\mu_x(a\overline{D}) = a \mu_x(\overline{D})$ for $a \in \QQ_{>0}$.

\item
If $\overline{D}$ is nef and big, then $\mu_x(\overline{D}) = 0$.
\end{enumerate}
\end{Proposition}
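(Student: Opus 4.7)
The plan is to dispose of (1)--(4) by direct manipulations with the defining infimum, and to treat (5) separately as the only item requiring genuine positivity input.

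A preliminary valid throughout (1)--(3) is the following multiplicative identity: if $\phi \in \aH(X, l\overline{D})$ and $\psi \in \aH(X, m\overline{E})$ are nonzero, then $\phi^m\psi^l \in \aH(X, lm(\overline{D}+\overline{E}))$, since the sup norm factors as
\[
\Vert\phi^m\psi^l\Vert_{lm(g_D+g_E),\sup} \leq \Vert\phi\Vert_{lg_D,\sup}^m \Vert\psi\Vert_{mg_E,\sup}^l \leq 1
\]
and effectivity is preserved under addition. Computing
\[
\tfrac{1}{lm}\mult_x\bigl(lm(D+E) + (\phi^m\psi^l)\bigr) = \tfrac{1}{l}\mult_x(lD + (\phi)) + \tfrac{1}{m}\mult_x(mE + (\psi))
\]
and taking infima in $(l, m, \phi, \psi)$ yields (1). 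For (2), $\overline{D}\leq\overline{E}$ gives $E - D \geq 0$ and $g_E \geq g_D$, hence $\aH(X, l\overline{D}) \subseteq \aH(X, l\overline{E})$, and the identity
\[
\mult_x\bigl(E + \tfrac{1}{l}(\phi)\bigr) = \mult_x\bigl(D + \tfrac{1}{l}(\phi)\bigr) + \mult_x(E - D)
\]
followed by an infimum gives (2). For (3), the substitution $\psi \mapsto \phi^l\psi$ is a bijection $\aH(X, l(\overline{D}+\widehat{(\phi)})) \to \aH(X, l\overline{D})$ (check of sup norm and effectivity uses $l(D+(\phi)) + (\psi) = lD + (\phi^l\psi)$) which realizes the equality of the two defining infima. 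For (4), writing $a = p/q$ in lowest terms, the substitution $m = la$ rewrites $\mu_x(a\overline{D})$ as $a$ times an infimum taken over $m \in p\ZZ_{>0} \cap N(\overline{D})$ and $\phi \in \aH(X, m\overline{D})\setminus\{0\}$; the operation $\phi \mapsto \phi^p$ shows that this restricted infimum agrees with $\mu_x(\overline{D})$, giving (4).

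The substantive content is (5). The natural plan is to blow up $X$ at $x$ to obtain $\pi : Y \to X$ with exceptional divisor $E$, equip $E$ with any $F_\infty$-invariant Green function so that $\overline{E}$ is a $C^0$-arithmetic divisor on $Y$, and argue by contradiction. If $\mu_x(\overline{D}) = c > 0$, then every $\phi \in \aH(X, l\overline{D})$ pulls back to a section of $l\pi^*\overline{D}$ whose order along $E$ is at least $lc$, which translates into an inclusion $\aH(X, l\overline{D}) \hookrightarrow \aH(Y, l(\pi^*\overline{D} - c\overline{E}))$ up to a shift of the Green function that is asymptotically inconsequential. Taking logarithmic growth rates yields $\avol(\overline{D}) \leq \avol(\pi^*\overline{D} - c\overline{E})$, while birational invariance gives $\avol(\pi^*\overline{D}) = \avol(\overline{D})$; the contradiction will come from the strict inequality $\avol(\pi^*\overline{D} - c\overline{E}) < \avol(\pi^*\overline{D})$ for $c > 0$.

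The principal obstacle is precisely this strict inequality, which is the arithmetic analogue of the positivity of Seshadri-type constants of nef and big line bundles. The cleanest route is to invoke the corresponding asymptotic-multiplicity assertion from \cite[Subsection~6.5]{MoArZariski}, whose definition of $\mu_x$ coincides with the one used here whenever $\ah(X,\overline{D}) \neq 0$, as noted at the beginning of this section; alternatively, for nef $\overline{D}$ one can combine the arithmetic intersection formula $\avol(\pi^*\overline{D}) = \adeg((\pi^*\overline{D})^d)$ with continuity of $\avol$ and a first-variation computation in $c$ to extract the strict decrease directly.
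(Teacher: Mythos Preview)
Your arguments for (1), (2), (3) are correct and more self-contained than the paper's, which simply cites \cite[Proposition~6.5.2]{MoArZariski} after checking (in Claim~\ref{claim:prop:mu:properties:1}) that the present definition of $\mu_x$ agrees with the one used there.

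For (4) there is a small slip. With $a=p/q$ in lowest terms and $l\in N(a\overline{D})$ an arbitrary positive integer, the substitution $m=la=lp/q$ lands in $(p/q)\ZZ_{>0}$, not in $p\ZZ_{>0}$; in particular $m$ need not be an integer, so it is not an admissible index in the definition of $\mu_x(\overline{D})$. The fix is easy: first observe that the infimum defining $\mu_x(a\overline{D})$ is unchanged if one restricts to $l\in q\ZZ_{>0}$ (replace any $(l,\phi)$ by $(ql,\phi^q)$, which preserves $\mult_x(aD+(1/l)(\phi))$); after that restriction the substitution $m=la$ does give $m\in p\ZZ_{>0}$, and your $\phi\mapsto\phi^p$ step finishes the argument. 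The paper avoids this bookkeeping by first proving (4) for $a\in\ZZ_{>0}$ (two inequalities via $\phi\mapsto\phi^a$ and reindexing $l\mapsto la$), and then for $a\in\QQ_{>0}$ choosing $m$ with $ma\in\ZZ_{>0}$ and writing $m\mu_x(a\overline{D})=\mu_x(ma\overline{D})=ma\,\mu_x(\overline{D})$.

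For (5) your outline is reasonable but not a proof. The hard point, as you note, is the strict inequality $\avol(\pi^*\overline{D}-c\overline{E})<\avol(\pi^*\overline{D})$ for $c>0$; your proposed ``first-variation'' route is incomplete because $\pi^*\overline{D}-c\overline{E}$ is no longer nef, so the identification $\avol=\adeg$ is unavailable there and one needs the differentiability of $\avol$ together with a genuine positivity input from bigness. In the end you fall back on \cite[Subsection~6.5]{MoArZariski}, which is exactly what the paper does (via Claim~\ref{claim:prop:mu:properties:1} and \cite[Proposition~6.5.3]{MoArZariski}); so your treatment of (5) is equivalent to the paper's, just with extra scaffolding that does not close the gap on its own.
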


\begin{proof}
Let us begin with the following claim:

\begin{Claim}
\label{claim:prop:mu:properties:1}
We assume that $\ah(X, \overline{D}) \not= 0$. As in \cite[Subsection~6.5]{MoArZariski}, we define $\nu_x(l\overline{D})$ 
\rom{(}$l \in \ZZ_{>0}$\rom{)} and
$\mu'_x(\overline{D})$ to be
\[
\begin{cases}
\nu_x(l\overline{D}) := \min \{ \mult_x(lD + (\phi)) \mid \phi \in \aH(X, l\overline{D}) \setminus \{ 0 \} \},\\
\mu'_x(\overline{D}) := \inf \left.\left\{ \frac{\nu_x(l\overline{D})}{l} \ \right|\  l \in \ZZ_{>0} \right\} =
\lim\limits_{l\to\infty} \frac{\nu_x(l\overline{D})}{l}.
\end{cases}
\]
Then $\mu'_x(\overline{D}) = \mu_x(\overline{D})$.
\end{Claim}

\begin{proof}
If we choose $l \in \ZZ_{>0}$ and $\phi \in \aH(l\overline{D}) \setminus \{ 0 \}$, then
\[
\mu'_x(\overline{D}) \leq  \frac{\nu_x(l\overline{D})}{l} \leq  \mult_x(D + (1/l)(\phi)),
\]
which implies
$\mu'_x(\overline{D}) \leq \mu_x(\overline{D})$.

Conversely, for each $l \in \ZZ_{>0}$,
we choose $\psi_l \in \aH(X, l\overline{D}) \setminus \{ 0 \}$ such that
$\nu_x(l\overline{D}) = \mult_x(lD + (\psi_l))$.
Then
\[
\mu_x(\overline{D}) \leq \mult_x(D + (1/l)(\psi_l)) = \frac{\nu_x(l\overline{D})}{l},
\]
and hence $\mu_x(\overline{D}) \leq \mu'_x(\overline{D})$.
Thus the claim follows.
\end{proof}

Since (1), (2), (3) and (5) follows from 
\cite[Proposition~6.5.2 and Proposition~6.5.3]{MoArZariski}, (4) and Claim~\ref{claim:prop:mu:properties:1},
it is sufficient to see (4). 

First we assume that $a \in \ZZ_{>0}$.
Let $l \in N(\overline{D})$ and $\phi \in \aH(l\overline{D}) \setminus \{ 0 \}$.
Then $\phi^a \in \aH(l(a \overline{D})) \setminus \{ 0 \}$. Thus
\[
\mu_{x}(a\overline{D}) \leq \mult_{x} (aD + (1/l)(\phi^a)) = a \mult_{x} (D + (1/l)(\phi)),
\]
which yields $\mu_{x}(a\overline{D}) \leq a \mu_{x}(\overline{D})$.
Conversely let $l \in N(a\overline{D})$ and $\psi \in \aH(l(a\overline{D})) \setminus \{ 0 \}$.
Then
\[
\mu_x(\overline{D}) \leq \mult_{x} (D + (1/la)(\psi)) = (1/a) \mult_{x} (a D + (1/l)(\psi)),
\]
and hence $\mu_x(\overline{D}) \leq (1/a) \mu_{x}(a \overline{D})$.
In general, we choose a positive integer $m$ such that $ma \in \ZZ_{>0}$. Then, by the previous observation,
\[
m \mu_{x}(a \overline{D}) = \mu_x(ma\overline{D}) = ma \mu_x(\overline{D}),
\]
as required.
\end{proof}

\begin{Lemma}
\label{lem:generator:mu}
For each $l \in N(\overline{D})$, let $\{ \phi_{l,1}, \ldots, \phi_{l,r_l} \}$ be a subset
of $\aH(X, l\overline{D}) \setminus \{ 0 \}$ such that 
$\aH(X, l\overline{D}) \subseteq \langle \phi_{l,1}, \ldots, \phi_{l,r_l} \rangle_{\ZZ}$.
Let $x$ be a point of $X$ such that the Zariski closure $\overline{ \{ x \} }$ of $\{ x \}$ is flat over $\ZZ$.
Then
\[
\mu_x(\overline{D}) = \inf \{ \mult_x\left( D + (1/l)(\phi_{l,i}) \right) \mid l \in N(D),\ i = 1, \ldots, r_l \}.
\]
\end{Lemma}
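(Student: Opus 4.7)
Write
\[
\alpha := \inf \{ \mult_x(D + (1/l)(\phi_{l,i})) \mid l \in N(\overline{D}),\ i = 1, \ldots, r_l \}
\]
for the right-hand side. The strategy is to prove the two inequalities $\mu_x(\overline{D}) \leq \alpha$ and $\mu_x(\overline{D}) \geq \alpha$ separately; the first is essentially by inspection, and the second is a single application of the ultrametric property of $\mult_x$ in the local ring $\OO_{X,x}$.

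The inequality $\mu_x(\overline{D}) \leq \alpha$ is immediate: each $\phi_{l,i}$ lies in $\aH(X, l\overline{D}) \setminus \{0\}$, so each $\mult_x(D + (1/l)(\phi_{l,i}))$ is one of the quantities over which the infimum defining $\mu_x(\overline{D})$ is taken.

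For the reverse inequality, fix $l \in N(\overline{D})$ and $\phi \in \aH(X, l\overline{D}) \setminus \{0\}$, and expand $\phi = \sum_{i=1}^{r_l} c_i \phi_{l,i}$ with $c_i \in \ZZ$. Choose a local generator $s$ of $\OO(lD)$ near $x$, so that $\mult_x(lD + (\psi)) = \mult_x(\psi \cdot s)$ for every nonzero rational section $\psi$. The key observation is that because $\overline{\{x\}}$ is flat over $\ZZ$, the point $x$ maps to the generic point of $\Spec\ZZ$ and hence lies in the generic fiber $X_\QQ$; in particular $\OO_{X,x}$ is a $\QQ$-algebra, and every nonzero integer $c_i$ is a unit in $\OO_{X,x}$. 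Using the ultrametric inequality $\mult_x(f + g) \geq \min(\mult_x(f), \mult_x(g))$ in $\OO_{X,x}$, one then gets
\[
\mult_x(lD + (\phi)) = \mult_x\bigl(\textstyle\sum_i c_i \phi_{l,i} s\bigr) \geq \min_{i:c_i\neq 0} \mult_x(\phi_{l,i} s) = \min_{i:c_i\neq 0} \mult_x(lD + (\phi_{l,i})) \geq l\alpha.
\]
Dividing by $l$ and taking the infimum over all admissible $(l, \phi)$ yields $\mu_x(\overline{D}) \geq \alpha$.

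The only step that requires any care is the verification that nonzero integer coefficients $c_i$ do not alter the multiplicity, which is exactly where the flatness hypothesis on $\overline{\{x\}}$ enters. Once that is noted, the ultrametric inequality for multiplicities in a local ring delivers the result routinely.
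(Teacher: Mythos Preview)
Your proof is correct and follows essentially the same approach as the paper: both establish the nontrivial inequality $\mu_x(\overline{D}) \geq \alpha$ via the ultrametric property of $\mult_x$ together with the observation that nonzero integers have multiplicity zero at $x$, which is exactly where the flatness hypothesis enters. The only cosmetic difference is that the paper works directly with $\mult_x((\phi))$ on $\Rat(X)^\times$ rather than introducing a local generator of $\OO(lD)$---this sidesteps the minor issue that $lD$ need not be a Cartier divisor when $D$ is an $\RR$-divisor, but your argument goes through unchanged once one notes that $\mult_x(lD + (\phi)) = l\,\mult_x(D) + \mult_x((\phi))$ by linearity.
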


\begin{proof}
Clearly
\[
\mu_x(\overline{D}) \leq \inf \{ \mult_x\left( D + (1/l)(\phi_{l,i}) \right) \mid l \in N(D),\ i = 1, \ldots, r_l \}.
\]
Let us consider the converse inequality.
For $l \in N(\overline{D})$ and $\phi \in \aH(X, l\overline{D}) \setminus \{ 0 \}$,
we set $\phi = \sum_{i=1}^{r_l} c_i \phi_{l,i}$ for some $c_{1}, \ldots, c_{r_l} \in \ZZ$.
Note that 
\[
\mult_x((\phi + \psi)) \geq \min \{ \mult_x((\phi)), \mult_x((\psi)) \}\quad\text{and}\quad
\mult_x ((a)) = 0
\]
for $\phi, \psi \in \Rat(X)^{\times}$ and $a \in \QQ^{\times}$ with
$\phi + \psi \not= 0$.
Thus we can find $i$ such that 
\[
\mult_x((\phi)) \geq \mult_{x}((\phi_{l,i})),
\]
and hence the converse inequality holds.
\end{proof}

\section{Zariski decomposition of $\overline{D}_{\pmb{a}}$ on $\PP^1_{\ZZ}$}
We use the same notation as in Section~\ref{sec:fund:prop:func}. We assume $n=1$.
In this section, we consider the Zariski decomposition of $\overline{D}_{\pmb{a}}$ on $\PP^1_{\ZZ}$.
Note that $\Theta_{\pmb{a}}$ is a closed interval in $[0,1]$.
For simplicity, we denote the affine coordinate $z_1$ by $z$, that is, $z = T_1/T_0$.

\begin{Theorem}
\label{thm:zariski:decomp:PP:1}
The Zariski decomposition of $\overline{D}_{\pmb{a}}$ exists if and only if $a_0 + a_1 \geq 1$.
Moreover, if we set 
$\vartheta_{\pmb{a}} = \inf \Theta_{\pmb{a}}$, $\theta_{\pmb{a}} = \sup \Theta_{\pmb{a}}$,
$P_{\pmb{a}} = \theta_{\pmb{a}}H_0 - \vartheta_{\pmb{a}}H_{1}$ and
\[
p_{\pmb{a}}(z) =\begin{cases}
\vartheta_{\pmb{a}} \log \vert z \vert^2 & \text{if $\vert z \vert < \sqrt{\frac{a_0\vartheta_{\pmb{a}}}{a_1(1-\vartheta_{\pmb{a}})}}$}, \\
\log (a_0   + a_1\vert z \vert^2) & 
\text{if $\sqrt{\frac{a_0\vartheta_{\pmb{a}}}{a_1(1-\vartheta_{\pmb{a}})}} \leq \vert z \vert \leq \sqrt{\frac{a_0\theta_{\pmb{a}}}{a_1(1-\theta_{\pmb{a}})}}$}, \\
\theta_{\pmb{a}} \log \vert z \vert^2 & \text{if $\vert z \vert > \sqrt{\frac{a_0\theta_{\pmb{a}}}{a_1(1-\theta_{\pmb{a}})}}$},
\end{cases}
\]
then the positive part of $\overline{D}_{\pmb{a}}$ is $\overline{P}_{\pmb{a}} = (P_{\pmb{a}}, p_{\pmb{a}})$, where
$\sqrt{\frac{a_0\theta_{\pmb{a}}}{a_1(1-\theta_{\pmb{a}})}}$ is treated as $\infty$ if $\theta_{\pmb{a}} = 1$.
\end{Theorem}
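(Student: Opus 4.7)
First, if $a_0+a_1<1$ then by Theorem~\ref{thm:positivity:D:a:b}(5) the divisor $\overline{D}_{\pmb{a}}$ is not pseudo-effective; since in any decomposition $\overline{D}_{\pmb{a}}=\overline{P}+\overline{N}$ with $\overline{P}$ nef and $\overline{N}$ effective both summands are pseudo-effective, no Zariski decomposition can exist. So assume $a_0+a_1\ge 1$; then Lemma~\ref{lem:characteristic:fun:max} and continuity of $\varphi_{\pmb{a}}$ identify $\Theta_{\pmb{a}}=[\vartheta_{\pmb{a}},\theta_{\pmb{a}}]$ as a non-empty closed interval in $[0,1]$. The defining condition $\varphi_{\pmb{a}}(1-\vartheta,\vartheta)=0$ at the two endpoints is equivalent, after unpacking the definition of $\varphi_{\pmb{a}}$, to the identity
\[
\vartheta\log\frac{a_0\vartheta}{a_1(1-\vartheta)}=\log\frac{a_0}{1-\vartheta},
\]
which is precisely the statement that the line $\vartheta\log t+\mathrm{const}$ is tangent to $\log(a_0+a_1 t)$ at $t_\vartheta:=a_0\vartheta/(a_1(1-\vartheta))$; a short computation also shows the derivatives match there. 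Hence $p_{\pmb{a}}$ is a $C^1$ gluing of three PSH pieces, and is therefore a continuous PSH function on $\PP^1(\CC)$ whose log-singularities at $z=0$ and $z=\infty$ match $P_{\pmb{a}}=\theta_{\pmb{a}}H_0-\vartheta_{\pmb{a}}H_1$.

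Next I would verify $\overline{P}_{\pmb{a}}\le\overline{D}_{\pmb{a}}$ and that $\overline{P}_{\pmb{a}}$ is nef. The divisorial inequality $H_0-P_{\pmb{a}}=(1-\theta_{\pmb{a}})H_0+\vartheta_{\pmb{a}}H_1\ge 0$ is immediate. For the Green functions, set $t=|z|^2$ and $\psi_\vartheta(t)=\log(a_0+a_1t)-\vartheta\log t$; its unique critical point lies at $t_\vartheta$ with value $0$ by the identity above, and $\psi_\vartheta\to+\infty$ at the relevant end, so $\psi_\vartheta\ge 0$ on $(0,t_{\vartheta_{\pmb{a}}}]$, and an analogous bound on $[t_{\theta_{\pmb{a}}},\infty)$ yields $g_{\pmb{a}}\ge p_{\pmb{a}}$ everywhere. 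For nefness of $\overline{P}_{\pmb{a}}$, the Green function $p_{\pmb{a}}$ is PSH by construction, and checking non-negativity of the arithmetic degree on every irreducible curve of $\PP^1_{\ZZ}$ is a direct computation from the explicit formulas (the generic-fiber degree is $\theta_{\pmb{a}}-\vartheta_{\pmb{a}}\ge 0$, and the remaining horizontal contributions reduce to non-negative evaluations of $p_{\pmb{a}}$).

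The main obstacle is the maximality statement: for every nef $\overline{Q}=(Q,q)$ with $\overline{Q}\le\overline{D}_{\pmb{a}}$, one must show $\overline{Q}\le\overline{P}_{\pmb{a}}$. On the divisorial side, Proposition~\ref{prop:criterion:QQ:effective} pins down the $\ZZ$-generators of $\aH(\PP^1_{\ZZ},l\overline{D}_{\pmb{a}})$ as the monomials $z^e$ with $e\in l\Theta_{\pmb{a}}\cap\ZZ$, and Lemma~\ref{lem:generator:mu} then yields $\mu_{H_0}(\overline{D}_{\pmb{a}})=1-\theta_{\pmb{a}}$ and $\mu_{H_1}(\overline{D}_{\pmb{a}})=\vartheta_{\pmb{a}}$ (with $\mu_\Gamma(\overline{D}_{\pmb{a}})=0$ for all other horizontal primes); the asymptotic-multiplicity bound on the negative part $\overline{N}=\overline{D}_{\pmb{a}}-\overline{Q}$ forces $Q\le P_{\pmb{a}}$ divisor-wise. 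The Green-function comparison $q\le p_{\pmb{a}}$ is the hardest step: on the middle region it is immediate from $q\le g_{\pmb{a}}=p_{\pmb{a}}$, while on the inner disk $\{|z|\le\sqrt{t_{\vartheta_{\pmb{a}}}}\}$ and the outer disk at infinity one shows that $\vartheta_{\pmb{a}}\log|z|^2$ and $\theta_{\pmb{a}}\log|z|^2$ are the maximal PSH functions with the prescribed log-singularities matching the boundary values already controlled by $p_{\pmb{a}}$ — a maximum-principle argument applied to $q-p_{\pmb{a}}$. Handling general (non-radial) competitors rigorously, and pinning down the precise envelope characterization of $p_{\pmb{a}}$ among PSH functions with the relevant singular behavior, is where I expect most of the technical work to lie.
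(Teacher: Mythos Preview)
Your outline is close to the paper's proof in spirit, but there are two structural gaps worth flagging.

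First, your maximality argument treats an arbitrary nef $\overline{Q}\le\overline{D}_{\pmb{a}}$ and invokes ``the asymptotic-multiplicity bound on the negative part'' to force $Q\le P_{\pmb{a}}$. But the inequality you need, $\mult_{\Gamma}(H_0-Q)\ge\mu_{\Gamma}(\overline{D}_{\pmb{a}})$, follows from Proposition~\ref{prop:mu:properties}(2),(5) only when $\mu_{\Gamma}(\overline{Q})=0$, which requires $\overline{Q}$ nef \emph{and big}; for a general nef $\overline{Q}$ this is not available (and $\mu_\Gamma(\overline{Q})$ may not even be defined). The paper avoids this by first invoking the abstract existence theorem \cite[Theorem~9.2.1]{MoArZariski}: once the positive part $(P,q)$ is known to exist, \cite[Claim~9.3.5.1 and Proposition~9.3.1]{MoArZariski} pin its divisorial part to exactly $P_{\pmb{a}}$. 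One then only has to compare the Green function $q$ of this single object with $p_{\pmb{a}}$, and the sandwich $p_{\pmb{a}}\le q\le g_{\pmb{a}}$ together with the maximum principle on each disk finishes it cleanly. This also dissolves your worry about ``non-radial competitors'': once the divisor is fixed as $P_{\pmb{a}}$, the function $u=q-p_{\pmb{a}}$ is genuinely subharmonic on the inner disk with $u=0$ on the boundary circle, and no envelope characterization is needed.

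Second, you fold the boundary case $a_0+a_1=1$ into the general argument, but there $\overline{D}_{\pmb{a}}$ is only pseudo-effective, not big: $\Theta_{\pmb{a}}$ is a single point, and when $\pmb{a}\notin\QQ^2$ one has $\aH(\PP^1_{\ZZ},l\overline{D}_{\pmb{a}})=\{0\}$ for all $l$ by Theorem~\ref{thm:positivity:D:a:b}(6), so $\mu_{\Gamma}(\overline{D}_{\pmb{a}})$ is not even defined and the asymptotic-multiplicity route collapses. The paper treats this case separately: the inequality $-a_1\widehat{(z)}\le\overline{D}_{\pmb{a}}$ (from Claim~\ref{claim:lem:characteristic:fun:max}) gives existence of the decomposition, and the identification of the positive part as $-a_1\widehat{(z)}$ is obtained by a limit argument, using that the positive part is $\le\overline{P}_{t\pmb{a}}$ for every $t>1$ (from the big case already established) and letting $t\to 1$.
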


\begin{proof}
First we consider the case where $\overline{D}_{\pmb{a}}$ is big, that is, $a_0 + a_1 > 1$ by Theorem~\ref{thm:positivity:D:a:b}.
In this case, $0 \leq \vartheta_{\pmb{a}} < \theta_{\pmb{a}} \leq 1$.
The existence of the Zariski decomposition follows from \cite[Theorem~9.2.1]{MoArZariski}.
Here we consider functions 
\begin{align*}
& r_1 : \left\{ z \in \PP^1(\CC) \ \left|\  \vert z \vert < \sqrt{\frac{a_0\theta_{\pmb{a}}}{a_1(1-\theta_{\pmb{a}})}} \right\}\right. \to \RR \\
\intertext{and}
& r_2 : \left\{ z \in \PP^1(\CC) \ \left|\  \vert z \vert > \sqrt{\frac{a_0\vartheta_{\pmb{a}}}{a_1(1-\vartheta_{\pmb{a}})}} \right\}\right. \to \RR
\end{align*}
given by
\[
r_1(z) = \begin{cases}
0  & \text{if $\vert z \vert < \sqrt{\frac{a_0\vartheta_{\pmb{a}}}{a_1(1-\vartheta_{\pmb{a}})}}$}, \\
- \vartheta_{\pmb{a}} \log \vert z \vert^2 + \log (a_0  + a_1\vert z \vert^2) & 
\text{if $\sqrt{\frac{a_0\vartheta_{\pmb{a}}}{a_1(1-\vartheta_{\pmb{a}})}} \leq \vert z \vert < \sqrt{\frac{a_0\theta_{\pmb{a}}}{a_1(1-\theta_{\pmb{a}})}}$}. \\
\end{cases}
\]
and
\[
r_2(z) = \begin{cases}
- \theta_{\pmb{a}} \log \vert z \vert^2 + \log (a_0 + a_1 \vert z \vert^2) & 
\text{if $\sqrt{\frac{a_0\vartheta_{\pmb{a}}}{a_1(1-\vartheta_{\pmb{a}})}} < \vert z \vert \leq \sqrt{\frac{a_0\theta_{\pmb{a}}}{a_1(1-\theta_{\pmb{a}})}}$}, \\
0 & \text{if $\vert z \vert > \sqrt{\frac{a_0\theta_{\pmb{a}}}{a_1(1-\theta_{\pmb{a}})}}$}.
\end{cases}
\]
In order to see that $p_{\pmb{a}}$ is a $P_{\pmb{a}}$-Green function of $(C^0 \cap \Tpsh)$-type on $\PP^1(\CC)$,
it is sufficient to check that $r_1$ and $r_2$ are  continuous and subharmonic on each area.
Let us see that $r_1$ is continuous and subharmonic. If $\vartheta_{\pmb{a}} = 0$, then the assertion is obvious, so that we may assume that
$\vartheta_{\pmb{a}} > 0$.
First of all, as $\varphi_{\pmb{a}}(1-\vartheta_{\pmb{a}}, \vartheta_{\pmb{a}}) = 0$, we have $r_1(z) = 0$ if $\vert z \vert = \sqrt{\frac{a_0\vartheta_{\pmb{a}}}{a_1(1-\vartheta_{\pmb{a}})}}$,  and hence
$r_1$ is continuous. It is obvious that $r_1$ is subharmonic on 
\[
\left\{ z \in \CC \ \left| \ \vert z \vert <\sqrt{\frac{a_0\vartheta_{\pmb{a}}}{a_1(1-\vartheta_{\pmb{a}})}}\right\}\right. \cup
\left\{ z \in \CC \ \left| \   \sqrt{\frac{a_0\vartheta_{\pmb{a}}}{a_1(1-\vartheta_{\pmb{a}})}} <  \vert z \vert < \sqrt{\frac{a_0\theta_{\pmb{a}}}{a_1(1-\theta_{\pmb{a}})}} \right\}\right..
\]
By using Claim~\ref{claim:lem:characteristic:fun:max},
\begin{align*}
 \vartheta_{\pmb{a}} \log \vert z \vert^2 & = (1-  \vartheta_{\pmb{a}})\log (1) +  \vartheta_{\pmb{a}} \log \vert z \vert^2  \\
 & \leq
 \log(a_0 + a_1 \vert z \vert^2 ) + \varphi_{\pmb{a}}(1-\vartheta_{\pmb{a}}, \vartheta_{\pmb{a}}) = \log(a_0 + a_1 \vert z \vert^2 ).
\end{align*}
Thus $r_1 \geq 0$. Therefore, if $\vert z \vert = \sqrt{\frac{a_0\vartheta_{\pmb{a}}}{a_1(1-\vartheta_{\pmb{a}})}}$, then
\[
r_1(z) = 0 \leq \frac{1}{2\pi} \int_0^{2\pi} r_1(z + \epsilon e^{\sqrt{-1}t}) dt
\]
for a small positive real number $\epsilon$, and hence $r_1$ is subharmonic.
In the similar way, we can check that $r_2$ is continuous and subharmonic.

Next let us see that $\overline{P}_{\pmb{a}}$ is nef.
As $r_1(0) = 0$ and $r_2(\infty) = 0$, we have 
\[
\adeg(\rest{\overline{P}_{\pmb{a}}}{H_0}) = \adeg(\rest{\overline{P}_{\pmb{a}}}{H_{1}}) = 0.
\]
Note that 
\[
\overline{P}_{\pmb{a}} + \vartheta_{\pmb{a}} \widehat{(z)} = ((\theta_{\pmb{a}} - \vartheta_{\pmb{a}})H_{0}, p_{\pmb{a}}(z) - \vartheta_{\pmb{a}}\log\vert z \vert^2)
\]
and
\[
p_{\pmb{a}}(z) -  \vartheta_{\pmb{a}}\log\vert z \vert^2
= 
\begin{cases}
 r_1(z) & \text{if $\vert z \vert \leq \sqrt{\frac{a_0\theta_{\pmb{a}}}{a_1(1-\theta_{\pmb{a}})}}$}, \\
(\theta_{\pmb{a}} - \vartheta_{\pmb{a}}) \log \vert z \vert^2 & \text{if $\vert z \vert > \sqrt{\frac{a_0\theta_{\pmb{a}}}{a_1(1-\theta_{\pmb{a}})}}$}.
\end{cases}
\]
Therefore, $p_{\pmb{a}}(z) -  \vartheta_{\pmb{a}}\log\vert z \vert^2 \geq 0$ on $\PP^1(\CC)$, which means that
$\overline{P}_{\pmb{a}} + \vartheta_{\pmb{a}} \widehat{(z)}$ is effective.
Let $\gamma$ be a $1$-dimensional closed integral subscheme of $\PP^1_{\ZZ}$ with
$\gamma \not= H_0, H_{1}$.
Then
\[
\adeg(\rest{\overline{P}_{\pmb{a}}}{\gamma}) = \adeg(\rest{((\theta_{\pmb{a}} - \vartheta_{\pmb{a}})H_{0}, p_{\pmb{a}} - \vartheta_{\pmb{a}}\log\vert z \vert^2)}{\gamma}) \geq 0.
\]

By using Proposition~\ref{prop:criterion:QQ:effective},
we have
$\mu_{H_0}(\overline{D}_{\pmb{a}}) = 1 - \theta_{\pmb{a}}$ and $\mu_{H_{1}}(\overline{D}_{\pmb{a}}) = \vartheta_{\pmb{a}}$.
Thus
the positive part of $\overline{D}_{\pmb{a}}$
can be written by a form 
$(P_{\pmb{a}}, q)$,
where
$q$ is a $P_{\pmb{a}}$-Green function of $(C^0 \cap \Tpsh)$-type on $\PP^1(\CC)$ (cf. \cite[Claim~9.3.5.1 and Proposition~9.3.1]{MoArZariski}).
Note that $\overline{P}_{\pmb{a}}$ is nef and $\overline{P}_{\pmb{a}} \leq \overline{D}_{\pmb{a}}$, so that 
\[
p_{\pmb{a}}(z) \leq q(z) \leq  \log(a_0 + a_1 \vert z \vert^2 ).
\]
We choose a continuous function $u$ such that $p_{\pmb{a}} + u = q$.
Then $u(z) = 0$ on 
\[
\sqrt{\frac{a_0\vartheta_{\pmb{a}}}{a_1(1-\vartheta_{\pmb{a}})}} \leq \vert z \vert \leq \sqrt{\frac{a_0\theta_{\pmb{a}}}{a_1(1-\theta_{\pmb{a}})}}.
\]
Moreover, since $q(z) = \vartheta_{\pmb{a}} \log \vert z \vert^2 + u(z)$ on 
$\vert z \vert \leq \sqrt{\frac{a_0\vartheta_{\pmb{a}}}{a_1(1-\vartheta_{\pmb{a}})}}$,
$u$ is subharmonic on $\vert z \vert \leq \sqrt{\frac{a_0\vartheta_{\pmb{a}}}{a_1(1-\vartheta_{\pmb{a}})}}$.
On the other hand,
$u(0) = 0$ because 
\[
\adeg (\rest{(P_{\pmb{a}}, q)}{H_1}) = u(0) = 0.
\]
Therefore, $u = 0$ on $\vert z \vert \leq \sqrt{\frac{a_0\vartheta_{\pmb{a}}}{a_1(1-\vartheta_{\pmb{a}})}}$
by the maximal principle.
In a similar way, we can see that $u = 0$ on $\vert z \vert \geq \sqrt{\frac{a_0\theta_{\pmb{a}}}{a_1(1-\theta_{\pmb{a}})}}$.

\medskip
Next we consider the case where $a_0 + a_1 = 1$. 
By Claim~\ref{claim:lem:characteristic:fun:max},
\[
a_1 \log \vert z \vert^2 \leq \log(a_0 + a_1\vert z \vert^2)
\]
on $\PP^1(\CC)$. Thus 
$-a_1\widehat{(z)} \leq \overline{D}_{\pmb{a}}$, and hence the Zariski decomposition of $\overline{D}_{\pmb{a}}$ exists by
\cite[Theorem~9.2.1]{MoArZariski}.
Let $\overline{P}$ be the positive part of $\overline{D}_{\pmb{a}}$. Then $-a_1\widehat{(z)} \leq \overline{P}$.

Let us consider the converse inequality.
Let $t$ be a real number with $t > 1$.
Since $\overline{P} \leq \overline{D}_{\pmb{a}} \leq \overline{D}_{t\pmb{a}}$,
we have $\overline{P} \leq \overline{P}_{t\pmb{a}}$ because $\overline{P}_{t\pmb{a}}$ is the positive part of $\overline{D}_{t\pmb{a}}$
by the previous observation.  Since $\varphi_{t\pmb{a}} = \varphi_{\pmb{a}} + \log (t)$,
we have $\lim_{t\to 1} \vartheta_{t\pmb{a}} = \lim_{t\to 1} \theta_{t\pmb{a}} = a_1$.
Therefore, we can see
\[
\lim_{t\to 1} \overline{P}_{t\pmb{a}} = \overline{P}_{\pmb{a}} = -a_1\widehat{(z)}.
\]
Thus $\overline{P} \leq -a_1\widehat{(z)}$.

\medskip
Finally we consider the case where $a_0+ a_1 < 1$.
Then, by Theorem~\ref{thm:positivity:D:a:b}, $\overline{D}_{\pmb{a}}$ is not pseudo-effective.
Thus the Zariski decomposition does not exist by \cite[Proposition~9.3.2]{MoArZariski}.
\end{proof}

\section{Weak Zariski decomposition of $\overline{D}_{\pmb{a}}$}
\label{sec:counter:example:Zariski:decomposition}
Let $X$ be a $d$-dimensional, projective, generically smooth and normal arithmetic variety.
Let $\overline{D}$ be a big arithmetic $\RR$-divisor of $C^0$-type on $X$.
A decomposition $\overline{D} = \overline{P} + \overline{N}$ is called a {\em weak Zariski decomposition of $\overline{D}$} if the following conditions are satisfied:
\begin{enumerate}
\renewcommand{\labelenumi}{(\arabic{enumi})}
\item
$\overline{P}$ is a nef and big arithmetic $\RR$-divisor of $(C^0 \cap \Tpsh)$-type.

\item
$\overline{N}$ is an effective arithmetic $\RR$-divisor of $C^0$-type.

\item
$\mult_{\Gamma}(N) \leq \mu_{\Gamma}(\overline{D})$ for any horizontal prime divisor $\Gamma$ on $X$, that is,
$\Gamma$ is a reduced and irreducible divisor $\Gamma$ on $X$ such that $\Gamma$ is flat over $\ZZ$.
\end{enumerate}
Note that the Zariski decomposition of a big arithmetic $\RR$-divisor of $C^0$-type on an arithmetic surface is
a weak Zariski decomposition (cf. \cite[Claim~9.3.5.1]{MoArZariski}).
The above property (3) implies that $\mult_{\Gamma}(N) = \mu_{\Gamma}(\overline{D})$ for any horizontal prime divisor $\Gamma$ on $X$.
Indeed, by (2) and (5) in Proposition~\ref{prop:mu:properties},
\[
\mu_{\Gamma}(\overline{D}) \leq \mu_{\Gamma}(\overline{P}) + \mult_{\Gamma}(N) = \mult_{\Gamma}(N) \leq \mu_{\Gamma}(\overline{D}).
\]

\bigskip
From now on,
we use the same notation as in Section~\ref{sec:fund:prop:func}.
Let us begin with the following lemma.

\begin{Lemma}
\label{lem:birat:Zariski:P:n}
Let $f : X \to \PP^n_{\ZZ}$ and $g : Y \to X$ be birational morphisms of projective, generically smooth and normal arithmetic varieties.
If $f^*(\overline{D}_{\pmb{a}})$ admits a weak Zariski decomposition, then $g^*(f^*(\overline{D}_{\pmb{a}}))$ also admits a weak Zariski decomposition.
\end{Lemma}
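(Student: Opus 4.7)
The plan is to pull back the given decomposition: set $g^{*}f^{*}(\overline{D}_{\pmb{a}}) = g^{*}(\overline{P}) + g^{*}(\overline{N})$ on $Y$ and verify the three defining conditions of a weak Zariski decomposition. Conditions (1) and (2)---nefness and bigness of the positive part, $(C^0 \cap \Tpsh)$-regularity of its Green function, and effectiveness (with $C^0$-Green function) of the negative part---transport along the birational morphism $g$ between projective, generically smooth, normal arithmetic varieties by standard functoriality of pullback, so essentially no work is needed there. In particular $g^{*}f^{*}(\overline{D}_{\pmb{a}})$ is big, so $N(g^{*}f^{*}(\overline{D}_{\pmb{a}})) \neq \emptyset$ and $\mu_{\Gamma'}$ is defined.

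I would then focus on condition (3): for an arbitrary horizontal prime divisor $\Gamma'$ on $Y$, I want $\mult_{\Gamma'}(g^{*}N) \leq \mu_{\Gamma'}(g^{*}f^{*}(\overline{D}_{\pmb{a}}))$. The first step is to identify the spaces of small sections on $X$ and $Y$: because $g$ is birational with $X, Y$ normal, one has $\Rat(Y) = \Rat(X)$, $(\phi)_Y = g^{*}((\phi)_X)$, and $g_{*}\OO_Y = \OO_X$, while the sup norm on $Y(\CC)$ agrees with that on $X(\CC)$, so $\aH(Y, l\,g^{*}f^{*}(\overline{D}_{\pmb{a}})) = \aH(X, lf^{*}(\overline{D}_{\pmb{a}}))$ for each $l \geq 1$. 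For $\phi$ in this common set, with $E := f^{*}(D_{\pmb{a}}) + (1/l)(\phi) \geq 0$ on $X$, I would expand
\[
\mult_{\Gamma'}\bigl(g^{*}f^{*}(D_{\pmb{a}}) + (1/l)(\phi)\bigr) = \mult_{\Gamma'}(g^{*}E) = \sum_{\Gamma} \mult_{\Gamma}(E)\,\mult_{\Gamma'}(g^{*}\Gamma),
\]
and correspondingly $\mult_{\Gamma'}(g^{*}N) = \sum_{\Gamma} \mult_{\Gamma}(N)\,\mult_{\Gamma'}(g^{*}\Gamma)$; every coefficient $\mult_{\Gamma'}(g^{*}\Gamma)$ is non-negative.

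The crux is the observation that $\mult_{\Gamma'}(g^{*}\Gamma) > 0$ forces $g(\Gamma') \subseteq \Gamma$. Since $\Gamma'$ is flat over $\ZZ$, so is $g(\Gamma')$; a vertical prime divisor on $X$ (contained in a single fiber over $\Spec(\ZZ)$) cannot contain any horizontal subscheme, so only horizontal $\Gamma$ contribute to either sum. For such $\Gamma$, the hypothesis on $(\overline{P},\overline{N})$ together with the definition of $\mu_{\Gamma}$ yields $\mult_{\Gamma}(N) \leq \mu_{\Gamma}(f^{*}(\overline{D}_{\pmb{a}})) \leq \mult_{\Gamma}(E)$. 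Multiplying by $\mult_{\Gamma'}(g^{*}\Gamma) \geq 0$, summing over $\Gamma$, and taking the infimum over $l$ and $\phi$ then delivers condition (3) on $Y$.

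The hard part is precisely the case in which $g$ contracts $\Gamma'$: there $\mult_{\Gamma'}(g^{*}\Gamma)$ can be positive for several prime divisors $\Gamma \supseteq g(\Gamma')$ simultaneously, so condition (3) on $Y$ does not reduce to a single instance of (3) on $X$, and a naive divisor-wise inequality $N \leq E$ on $X$ need not hold because nothing constrains the \emph{vertical} multiplicities of $N$. The horizontality reduction in the previous paragraph is what resolves this: it confines the relevant $\Gamma$ to horizontal prime divisors on $X$, which are exactly those on which the hypothesis of the weak Zariski decomposition bounds $\mult_{\Gamma}(N)$ by $\mu_{\Gamma}(f^{*}(\overline{D}_{\pmb{a}}))$.
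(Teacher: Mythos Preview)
Your argument is correct, and it takes a somewhat different route from the paper's. The paper exploits the explicit description of $\aH(\PP^n_{\ZZ}, l\overline{D}_{\pmb{a}})$ in terms of monomials $z^{\pmb{e}}$ with $\pmb{e} \in l\Theta_{\pmb{a}} \cap \ZZ^n$: writing $f_{\QQ}^*(H_i) = \sum_j a_{ij} D_j$ and $N = \sum_j b_j D_j$ on $X_{\QQ}$, it uses Lemma~\ref{lem:generator:mu} to obtain the closed formula $\mu_{D_j}(f^*(\overline{D}_{\pmb{a}})) = \min\{\sum_i x_i a_{ij} : (x_0,\ldots,x_n) \in \widetilde{\Theta}_{\pmb{a}}\}$, and then a short linear-algebra computation shows that $\sum_i x_i d_i \geq \sum_j b_j c_j = \mult_{\Gamma}(g_{\QQ}^*(N))$ for every $(x_0,\ldots,x_n) \in \widetilde{\Theta}_{\pmb{a}}$, where $c_j = \mult_{\Gamma}(g_{\QQ}^*(D_j))$ and $d_i = \sum_j a_{ij} c_j$. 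Your approach bypasses these explicit formulas: you compare $\mult_{\Gamma'}(g^*N)$ directly with $\mult_{\Gamma'}(g^*E)$ for each effective $E = f^*(D_{\pmb{a}}) + (1/l)(\phi)$, using only the horizontality reduction and the termwise inequality $\mult_{\Gamma}(N) \leq \mu_{\Gamma} \leq \mult_{\Gamma}(E)$ for horizontal $\Gamma$ on $X$. This is cleaner and in fact proves the lemma for any big arithmetic $\RR$-divisor of $C^0$-type, not just $\overline{D}_{\pmb{a}}$; the paper's version, on the other hand, yields the explicit value of $\mu_{D_j}$ as a by-product, which is reused later in the proof of the non-existence theorem. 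Both arguments dispose of vertical components of $N$ in the same way---yours by the observation that a horizontal $\Gamma'$ cannot lie over a vertical $\Gamma$, the paper's by passing to $X_{\QQ}$ at the outset.
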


\begin{proof}
Let $f^*(\overline{D}_{\pmb{a}}) = \overline{P} + \overline{N}$ be a weak Zariski decomposition of $f^*(\overline{D}_{\pmb{a}})$.
We denote birational morphisms $X_{\QQ} \to \PP^n_{\QQ}$ and $Y_{\QQ} \to X_{\QQ}$ by $f_{\QQ}$ and $g_{\QQ}$ respectively.
We set 
\[
\widetilde{\Theta}_{\pmb{a}} = \{ \widetilde{e} \in \RR^{n+1} \mid e \in \Theta_{\pmb{a}} \},
\]
$f^*_{\QQ}(H_i) = \sum_{j} a_{ij} D_j$ for $i=0, \ldots, n$ and $N = \sum_{j} b_j D_j$ on $X_{\QQ}$, where $D_j$'s are reduced and irreducible divisors on $X_{\QQ}$.
Since 
\[
l H_0 + (z^{\pmb{e}}) = (l - \pmb{e}(1) - \cdots - \pmb{e}(n)) H_0 + \pmb{e}(1) H_1 + \cdots + \pmb{e}(n) H_n
\]
for $\pmb{e} \in l \Theta_{\pmb{a}} \cap \ZZ^n$, by Lemma~\ref{lem:generator:mu}, we have
\[
\mu_{D_j}(f^*(\overline{D}_{\pmb{a}})) = \min \left.\left\{ \sum_{i=0}^n x_i a_{ij} \ \right|\ (x_0, \ldots, x_n)  \in \widetilde{\Theta}_{\pmb{a}} \right\}.
\]
Thus
\[
b_j \leq \min \left.\left\{ \sum_{i=0}^n x_i a_{ij} \ \right|\ (x_0, \ldots, x_n)  \in \widetilde{\Theta}_{\pmb{a}} \right\}.
\]
for all $j$.

Here let us see that $g^*(f^*(\overline{D}_{\pmb{a}})) = g^*(\overline{P}) + g^*(\overline{N})$ is a weak Zariski decomposition.
For this purpose, it is sufficient to see that $\mult_{\Gamma}(g^*(N)) \leq \mu_{\Gamma}(g^*(f^*(\overline{D}_{\pmb{a}})))$
for any horizontal prime divisor $\Gamma$ on $Y$.
If we set $c_j = \mult_{\Gamma}(g_{\QQ}^*(D_j))$, then 
\[
d_i := \mult_{\Gamma}(g_{\QQ}^*(f_{\QQ}^*(H_i))) = \sum_{j} a_{ij} c_j.
\]
For $(x_0, \ldots, x_n) \in \widetilde{\Theta}_{\pmb{a}}$,
\[
\sum_i x_i d_i = \sum_j \left( \sum_i x_i a_{ij} \right) c_j \geq \sum_j b_j c_j = \mult_{\Gamma}(g_{\QQ}^*(N)),
\]
which yields $\mu_{\Gamma}(g^*(f^*(\overline{D}_{\pmb{a}}))) \geq \mult_{\Gamma}(g^*(N))$.
\end{proof}

Next let us consider the following lemma:

\begin{Lemma}
\label{lem:convex:set:in:RR2}
Let $\Theta$ be a compact convex set in $\RR^n$ and $p : \RR^n \to \RR^{n-1}$ the projection given by $p(x_1,\ldots,x_n) = (x_1,\ldots,x_{n-1})$.
Then $p(\Theta)$ is a compact convex set in $\RR^{n-1}$ and there exist
a concave function $\theta$ on $p(\Theta)$ and
a convex function $\vartheta$ on $p(\Theta)$ such that
\[
\Theta = \left\{ (x_1, \ldots, x_{n-1}, x_n) \in \RR^n \ \left|\  
\begin{array}{l} (x_1, \ldots, x_{n-1}) \in p(\Theta),\\ 
\vartheta(x_1, \ldots, x_{n-1}) \leq x_n \leq \theta(x_1, \ldots, x_{n-1})
\end{array} \right\}\right. .
\]
\end{Lemma}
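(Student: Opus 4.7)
The plan is to build $\theta$ and $\vartheta$ as the pointwise supremum and infimum of $x_n$ along the fibres of $p$, and then to transfer the convexity of $\Theta$ to the appropriate convexity/concavity of these functions.

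First, I would dispose of the preliminary claims: $p(\Theta)$ is convex because it is the image of a convex set under a linear map, and compact because $p$ is continuous and $\Theta$ is compact. For each $\pmb{y} = (x_1,\ldots,x_{n-1}) \in p(\Theta)$, the fibre $p^{-1}(\pmb{y}) \cap \Theta$ is a non-empty compact convex subset of the affine line $\{\pmb{y}\} \times \RR$, hence a closed interval. I define
\[
\theta(\pmb{y}) := \max\{ x_n \mid (\pmb{y},x_n) \in \Theta \},\qquad
\vartheta(\pmb{y}) := \min\{ x_n \mid (\pmb{y},x_n) \in \Theta \},
\]
both of which exist by compactness. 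With these definitions, the set-theoretic identity asserted in the lemma is tautological.

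Next I would verify concavity of $\theta$ and convexity of $\vartheta$. Pick $\pmb{y}, \pmb{y}' \in p(\Theta)$ and $t \in [0,1]$. By construction, $(\pmb{y}, \theta(\pmb{y}))$ and $(\pmb{y}', \theta(\pmb{y}'))$ lie in $\Theta$, so by convexity of $\Theta$ the point
\[
t(\pmb{y}, \theta(\pmb{y})) + (1-t)(\pmb{y}', \theta(\pmb{y}')) = \bigl(t\pmb{y} + (1-t)\pmb{y}',\ t\theta(\pmb{y}) + (1-t)\theta(\pmb{y}')\bigr)
\]
belongs to $\Theta$, and therefore $\theta(t\pmb{y} + (1-t)\pmb{y}') \geq t\theta(\pmb{y}) + (1-t)\theta(\pmb{y}')$ by the maximality in the definition of $\theta$. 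An identical argument with $\min$ replacing $\max$ and the reverse inequality gives convexity of $\vartheta$.

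There is no real obstacle here; the only point that needs a little care is the non-emptiness and closedness of the fibres (so that the extrema in the definitions are actually attained), which is guaranteed by compactness of $\Theta$. Once that is noted, the conclusion is essentially a formal consequence of the definition of a convex set together with the trivial observation that projection preserves convexity.
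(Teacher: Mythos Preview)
Your proof is correct and follows essentially the same approach as the paper: define $\theta$ and $\vartheta$ as the fibrewise maximum and minimum, note that the set description is then immediate, and verify concavity/convexity by taking the convex combination of the graph points and invoking the convexity of $\Theta$. The only cosmetic difference is that you make explicit the fibre-interval observation and the compactness justification for the extrema, which the paper leaves implicit.
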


\begin{proof}
Obviously $p(\Theta)$ is a compact convex set in $\RR^{n-1}$. For $(x_1,\ldots,x_{n-1}) \in p(\Theta)$, we set
\[
\begin{cases}
\theta(x_1,\ldots,x_{n-1}) := \max \{ x_n \in \RR \mid (x_1,\ldots,x_{n-1},x_n) \in \Theta \},\\
\vartheta(x_1,\ldots,x_{n-1}) := \min \{ x_n \in \RR \mid (x_1,\ldots,x_{n-1},x_n) \in \Theta \}.
\end{cases}
\]
Clearly 
\[
\Theta = \left\{ (x_1, \ldots, x_{n-1}, x_n) \in \RR^n \ \left|\  
\begin{array}{l} (x_1, \ldots, x_{n-1}) \in p(\Theta),\\ 
\vartheta(x_1, \ldots, x_{n-1}) \leq x_n \leq \theta(x_1, \ldots, x_{n-1})
\end{array} \right\}\right. .
\]
We need to show that $\theta$ (resp. $\vartheta$) is a concave (resp. convex) function.
Since 
\[
(x_1, \ldots, x_{n-1}, \theta(x_1, \ldots, x_{n-1})), 
(x'_1, \ldots, x'_{n-1}, \theta(x'_1, \ldots, x'_{n-1})) \in \Theta
\]
for $(x_1, \ldots, x_{n-1}), (x'_1, \ldots, x'_{n-1}) \in p(\Theta)$,
we have 
\[
\lambda (x_1, \ldots, x_{n-1}, \theta(x_1, \ldots, x_{n-1})) + (1-\lambda)(x'_1, \ldots, x'_{n-1}, \theta(x'_1, \ldots, x'_{n-1}))  \in \Theta
\]
for $0 \leq \lambda \leq 1$, which shows
that 
\begin{multline*}
\lambda \theta(x_1, \ldots, x_{n-1}) + (1-\lambda) \theta(x'_1, \ldots, x'_{n-1}) \\
\leq \theta(\lambda (x_1, \ldots, x_{n-1}) + (1-\lambda)(x'_1, \ldots, x'_{n-1})).
\end{multline*}
Thus $\theta$ is concave. Similarly we can see that $\vartheta$ is convex.
\end{proof}

\begin{Remark}
If $p(\Theta)$ is a polytope in Lemma~\ref{lem:convex:set:in:RR2},
then $\theta$ and $\vartheta$ are continuous on $p(\Theta)$ (cf. \cite{GKR}).
In general, $\theta$ and $\vartheta$ are not necessarily continuous on $p(\Theta)$.
Indeed,
let us consider the following set:
\[
\Theta = \{ (x, y, z) \in \RR^3 \mid 0 \leq y \leq 1,\ 0 \leq z \leq 1,\ x^2 \leq yz \}.
\]
Since
\[
x^2 \leq yz\quad\Longleftrightarrow\quad
x^2 + \left( \frac{y - z}{2} \right)^2 \leq \left( \frac{y + z}{2} \right)^2,
\]
we can easily see that $\Theta$ is a compact convex set in $\RR^3$.
Let $p : \RR^3 \to \RR^2$ be the projection given by $p(x,y,z) = (x, y)$.
Then 
\[
p(\Theta) = \{ (x, y) \in \RR^2 \mid x^2 \leq y \leq 1\}.
\]
Moreover, 
$\vartheta$ is given by
\[
\vartheta(x, y) = \begin{cases}
x^2/y & \text{if $(x, y) \not= (0, 0)$}, \\
0 & \text{if $(x,y) = (0,0)$}
\end{cases}
\]
and $\vartheta$ is not continuous at $(0,0)$.
\end{Remark}

\bigskip
Note that $\Theta_{\pmb{a}}$ is a compact convex set of $\RR^n$. 
We say a hyperplane $\alpha_1 x_1 + \cdots + \alpha_n x_n = \beta$ in $\RR^n$
is a {\em supporting hyperplane} of $\Theta_{\pmb{a}}$
at $(b_1, \ldots, b_n) \in \Theta_{\pmb{a}}$
if 
\[
\Theta_{\pmb{a}} \subseteq \{ \alpha_1 x_1 + \cdots + \alpha_n x_n \geq \beta \}\quad\text{and}\quad
\alpha_1 b_1 + \cdots + \alpha_n b_n = \beta.
\]

\begin{Proposition}
\label{prop:theta:tangent}
Let $(b_1, \ldots, b_n) \in \partial(\Theta_{\pmb{a}})$, that is,
$(b_1, \ldots, b_n)$ is a boundary point of $\Theta_{\pmb{a}}$.
We set $b_0 = 1 - b_1 - \cdots - b_n$.
We assume 
\[
a_0 + a_1 + \cdots + a_n > 1\quad\text{and}\quad
\# \{ i \mid 0 \leq i \leq n,\ b_i = 0 \} \leq 1.
\]
Then $\Theta_{\pmb{a}}$ has a unique supporting hyperplane at $(b_1, \ldots, b_n)$.
Moreover, in the case where $b_i = 0$, the supporting hyperplane is given by
\[
\begin{cases}
x_1 + \cdots + x_n = 1 & \text{if $b_0 = 0$}, \\
x_i = 0 & \text{if $b_i = 0$ for some $i$ with $1 \leq i \leq n$}.
\end{cases}
\]
\end{Proposition}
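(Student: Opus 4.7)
Set $\psi(\pmb{x}) := \varphi_{\pmb{a}}(\widetilde{\pmb{x}})$ for $\pmb{x} = (x_1,\ldots,x_n) \in \Delta_n$. Since $-\sum_{i=0}^n x_i\log x_i$ is strictly concave on the standard simplex, $\psi$ is strictly concave on $\Delta_n$, and by Lemma~\ref{lem:characteristic:fun:max} together with $\vert \pmb{a} \vert > 1$, the convex set $\Theta_{\pmb{a}} = \{\pmb{x} \in \Delta_n : \psi(\pmb{x}) \geq 0\}$ has nonempty interior. The supporting hyperplane at $\pmb{b}$ is unique iff the tangent cone $T_{\pmb{b}}(\Theta_{\pmb{a}})$ is a closed half-space (equivalently, its polar, the normal cone, is a one-dimensional ray). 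My plan is to verify this by splitting into cases according to which face of $\Delta_n$ contains $\pmb{b}$.

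\emph{Interior case ($b_i > 0$ for all $0 \leq i \leq n$).} Since $\pmb{b} \in \partial\Theta_{\pmb{a}} \cap \operatorname{int}(\Delta_n)$, we must have $\psi(\pmb{b}) = 0$. The gradient components $(\nabla \psi)_j(\pmb{b}) = \log(b_0/a_0) - \log(b_j/a_j)$ are well-defined, and $\nabla \psi$ vanishes on $\Delta_n$ only at the unique critical point $(a_1/\vert \pmb{a} \vert,\ldots,a_n/\vert \pmb{a} \vert)$, where $\psi = \log \vert \pmb{a} \vert > 0$ by Lemma~\ref{lem:characteristic:fun:max}. Hence $\nabla\psi(\pmb{b}) \neq 0$, so $\{\psi = 0\}$ is a smooth hypersurface near $\pmb{b}$; its unique tangent hyperplane is then the unique supporting hyperplane of $\Theta_{\pmb{a}}$ at $\pmb{b}$.

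\emph{Face case ($b_j = 0$ for exactly one $j$).} First suppose $j = 0$. The hyperplane $x_1+\cdots+x_n = 1$ supports $\Theta_{\pmb{a}}$ at $\pmb{b}$ trivially since $\Theta_{\pmb{a}} \subseteq \Delta_n$. For uniqueness, I show $T_{\pmb{b}}(\Theta_{\pmb{a}}) = \{\pmb{v} \in \RR^n : v_1 + \cdots + v_n \leq 0\}$; the inclusion $\subseteq$ is clear. Conversely, given $\pmb{v}$ with $v_1 + \cdots + v_n = -c < 0$, the curve $\pmb{x}(t) := \pmb{b} + t\pmb{v}$ has $x_0(t) = ct > 0$, and direct expansion of $\psi(\pmb{x}(t))$ around $t = 0$ yields
\[
\psi(\pmb{x}(t)) = -ct\log t + t\cdot C + O(t^2)
\]
for some constant $C$. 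The term $-ct\log t > 0$ dominates for small $t > 0$, so $\psi(\pmb{x}(t)) > 0$ and $\pmb{v} \in T_{\pmb{b}}(\Theta_{\pmb{a}})$; the boundary directions $v_1 + \cdots + v_n = 0$ are recovered by closure of the tangent cone. Dualizing, the normal cone equals $\RR_{\geq 0}\cdot(-1,\ldots,-1)$, so $x_1 + \cdots + x_n = 1$ is the unique supporting hyperplane. When $j \geq 1$, an analogous expansion with $-x_j\log x_j$ playing the role of $-x_0\log x_0$ gives $T_{\pmb{b}}(\Theta_{\pmb{a}}) = \{\pmb{v} : v_j \geq 0\}$, whose polar yields $x_j = 0$ as the unique supporting hyperplane.

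\emph{Main obstacle.} The subtle subcase is $\pmb{b} \in \partial\Delta_n \cap \{\psi = 0\}$, where naively one might fear that both the face of $\Delta_n$ and the tangent to $\{\psi = 0\}$ contribute distinct supporting hyperplanes. Uniqueness reflects that $\{\psi = 0\}$ meets $\partial\Delta_n$ ``vertically'', captured by the logarithmic blow-up of $\nabla\psi$ near $\{x_i = 0\}$: the core step is that $-x_i\log x_i$ dominates every linear function of $x_i$ as $x_i \to 0^+$, which forces the curve $\pmb{x}(t)$ above into the \emph{interior} of $\Theta_{\pmb{a}}$ rather than merely tangent to its boundary, so that the tangent cone opens up to a full half-space.
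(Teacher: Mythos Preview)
Your argument is correct and takes a genuinely different route from the paper's in the face case. The paper, after the same interior-case argument, reduces by a linear coordinate change to $b_n = 0$, then projects $\Theta_{\pmb{a}}$ onto $\RR^{n-1}$ via Lemma~\ref{lem:convex:set:in:RR2} to obtain the lower boundary function $\vartheta$. It first shows $(b_1,\ldots,b_{n-1})$ is interior to the projection, then applies the implicit function theorem on the locus where $\vartheta > 0$ and patches across $\Theta_{\pmb{a}'}$ to prove $\vartheta$ is $C^1$ at $(b_1,\ldots,b_{n-1})$ with vanishing gradient; this identifies $x_n = 0$ as the unique supporting hyperplane. Your approach bypasses the projection and implicit function theorem entirely, instead computing the tangent cone directly via the one-variable expansion $-x_i(t)\log x_i(t) = -tv_i\log t + O(t)$, which forces every inward ray to enter the interior of $\Theta_{\pmb{a}}$. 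Both methods ultimately rest on the same phenomenon---the logarithmic divergence of $\nabla\psi$ at the face---but yours is more elementary and shorter, while the paper's yields the slightly stronger (though unused) conclusion that the boundary of $\Theta_{\pmb{a}}$ is $C^1$ near $\pmb{b}$. One small omission: your expansion $\psi(\pmb{x}(t)) = -ct\log t + tC + O(t^2)$ tacitly assumes $\psi(\pmb{b}) = 0$; when $\psi(\pmb{b}) > 0$ the conclusion is immediate by continuity, so this does not affect correctness.
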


\begin{proof}
Here we set 
\[
\phi_{\pmb{a}}(x_1,\ldots,x_n) = \varphi_{\pmb{a}}(1-x_1-\cdots-x_n, x_1, \ldots, x_n)
\]
on $\Delta_n = \{ (x_1, \ldots, x_n) \in \RR_{\geq 0}^n \mid x_1+ \cdots + x_n \leq 1 \}$.
Then 
\[
\Theta_{\pmb{a}} = \{ (x_1, \ldots, x_n) \in \Delta_n \mid \phi_{\pmb{a}}(x_1, \ldots, x_n) \geq 0 \}.
\]

\medskip
First we assume that $(b_1, \ldots, b_n) \not\in \partial(\Delta_n)$.
Then $\phi_{\pmb{a}}(b_1, \ldots, b_n) = 0$.
Note that, for $(x_1,\ldots, x_n) \in \Delta_{n}  \setminus \partial(\Delta_n)$,
\begin{multline*}
(\phi_{\pmb{a}})_{x_1}(x_1, \ldots, x_n) = \cdots = (\phi_{\pmb{a}})_{x_n}(x_1, \ldots, x_n) = 0\quad
\Longleftrightarrow\quad \\
(x_1, \ldots, x_n) =\left(\frac{a_1}{a_0 + \cdots+ a_n}, \ldots, \frac{a_n}{a_0 + \cdots + a_n}\right),
\end{multline*}
and
$\phi_{\pmb{a}} \left(\frac{a_1}{a_0 + \cdots+ a_n}, \ldots, \frac{a_n}{a_0 + \cdots + a_n}\right) = \log(a_0 + \cdots + a_n) > 0$.
Thus we have 
\[
\left((\phi_{\pmb{a}})_{x_1}(b_1, \ldots, b_n), \ldots, (\phi_{\pmb{a}})_{x_1}(b_1, \ldots, b_n)\right) \not= (0, \ldots, 0),
\]
which means that
$\Theta_{\pmb{a}}$ has a unique supporting hyperplane at $(b_1, \ldots, b_n)$.

\medskip
Next we assume that $(b_1, \ldots, b_n) \in \partial(\Delta_n)$. Considering the following linear transformations:
\[
\left\{ \begin{split}
x'_1 & = x_1, \\
\quad\vdots & \quad \vdots \\
x'_{n-1} & = x_{n-1}, \\
x'_n & = 1 - x_1 - \cdots - x_n,\\
\end{split}\right.
\qquad\qquad
\left\{\begin{split}
x'_1 & = x_1,\\
\quad\vdots & \quad \vdots \\
x'_i & = x_n, \\
\quad\vdots & \quad \vdots \\
x'_n & = x_i,
\end{split}\right.
\]
we may assume $b_n = 0$.
Note that $(b_1, \ldots, b_{n-1}) \in \Delta_{n-1} \setminus \partial(\Delta_{n-1})$.
Let $p : \RR^n \to \RR^{n-1}$ be the projection given by $p(x_1, \ldots, x_n) = (x_1, \ldots, x_{n-1})$.
By Lemma~\ref{lem:convex:set:in:RR2},
there are 
a concave function $\theta$ on $p(\Theta_{\pmb{a}})$ and
a convex function $\vartheta$ on $p(\Theta_{\pmb{a}})$ such that
\[
\Theta_{\pmb{a}} = \left\{ (x_1, \ldots, x_{n-1}, x_n) \ \left|\ \begin{array}{l}
(x_1, \ldots, x_{n-1}) \in p(\Theta_{\pmb{a}}),\\
\vartheta(x_1, \ldots, x_{n-1}) \leq x_n \leq \theta(x_1, \ldots, x_{n-1}) \end{array} \right\}\right..
\]

\begin{Claim}
\label{claim:prop:theta:tangent:1}
$(b_1, \ldots, b_{n-1})$ is an interior point of $p(\Theta_{\pmb{a}})$.
In particular, $\vartheta$ is continuous around $(b_1, \ldots, b_{n-1})$ \rom{(}cf. \cite[Theorem~2.2]{Gru}\rom{)}.
\end{Claim}

\begin{proof}
Let us consider a function $\psi : [0, 1 - b_1 - \cdots - b_{n-1}] \to \RR$ given by
$\psi(t) = \phi_{\pmb{a}}(b_1, \ldots, b_{n-1}, t)$.
Note that 
\[
\psi'(t) = \log\frac{a_n}{a_0}\left(\frac{1 - b_1 - \cdots - b_{n-1}}{t} - 1\right).
\]
Thus
\[
\phi_{\pmb{a}}\left(b_1, \ldots, b_{n-1}, \frac{a_n(1-b_1-\cdots-b_{n-1})}{a_0 + a_n}\right) > \phi_{\pmb{a}}(b_1, \ldots, b_{n-1}, 0) \geq 0.
\]
Therefore, as $\left(b_1, \ldots, b_{n-1}, \frac{a_n(1-b_1-\cdots-b_{n-1})}{a_0 + a_n}\right) \in \Delta_n \setminus \partial(\Delta_n)$,
we can find a sufficiently small positive number $\epsilon$ such that
\[
\prod_{i=1}^{n-1}
(b_i - \epsilon, b_i + \epsilon)  \times
\left( \frac{a_n(1-b_1-\cdots-b_{n-1})}{a_0 + a_n} - \epsilon, \frac{a_n(1-b_1-\cdots-b_{n-1})}{a_0 + a_n}+ \epsilon \right)
\]
is a subset of $\Theta_{\pmb{a}}$, and hence
\[
(b_1, \ldots, b_{n-1}) \in  \prod_{i=1}^{n-1}
(b_i - \epsilon, b_i + \epsilon) \subseteq p(\Theta_{\pmb{a}}).
\]
\end{proof}

We set $\pmb{a}' = (a_0, \ldots, a_{n-1})$. Then
\[
\Theta_{\pmb{a}'} = \{ (x_1, \ldots, x_{n-1}) \in \RR^{n-1} \mid (x_1, \ldots, x_{n-1}, 0) \in \Theta_{\pmb{a}} \}.
\]
Clearly $(b_1, \ldots, b_{n-1}) \in \Theta_{\pmb{a}'}$ and $\vartheta \equiv 0$ on $\Theta_{\pmb{a}'}$.

\begin{Claim}
\label{claim:prop:theta:tangent:2}
$\vartheta$ is a continuously differentiable function around $(b_1, \ldots, b_{n-1})$ such that
\[
\vartheta_{x_1}(b_1, \ldots, b_{n-1}) = \cdots = \vartheta_{x_{n-1}}(b_1, \ldots, b_{n-1}) = 0.
\]
\end{Claim}

\begin{proof}
By Claim~\ref{claim:prop:theta:tangent:1},
there is a positive number $\epsilon$ such that \[
b_1 - \epsilon > 0, \ldots, b_{n-1} - \epsilon > 0, \ 
(b_1 + \epsilon) + \cdots + (b_{n-1} + \epsilon) < 1
\]
and
$\vartheta$ is continuous on $U = \prod_{i=1}^{n-1} (b_i - \epsilon, b_i + \epsilon)$.
If $(x_1, \ldots, x_{n-1}) \in U \setminus \Theta_{\pmb{a}'}$, then
$\vartheta(x_1, \ldots, x_{n-1}) > 0$, and hence 
\[
\phi_{\pmb{a}}(x_1, \ldots, x_{n-1}, \vartheta(x_1, \ldots, x_{n-1})) = 0
\]
for $(x_1, \ldots, x_{n-1}) \in U \setminus \Theta_{\pmb{a}'}$. Note that
\addtocounter{Claim}{1}
\begin{equation}
\label{eqn:prop:theta:tangent:1}
(\phi_{\pmb{a}})_{x_i} = \log \frac{a_i}{a_0} \left( \frac{1 - x_1 - \cdots - x_n}{x_i} \right).
\end{equation}
Since $\vartheta(b_1, \ldots, b_{n-1}) = 0$, choosing a smaller $\epsilon$ if necessarily, we may assume that
\[
(\phi_{\pmb{a}})_{x_n}(x_1, \ldots, x_{n-1}, \vartheta(x_1, \ldots, x_{n-1})) > 0
\]
for all $(x_1, \ldots, x_{n-1})\in U \setminus \Theta_{\pmb{a}'}$. Thus, by using the implicit function theorem,
$\vartheta$ is a $C^{\infty}$ function on $U \setminus \Theta_{\pmb{a}'}$ and
\addtocounter{Claim}{1}
\begin{equation}
\label{eqn:prop:theta:tangent:2}
\vartheta_{x_i}(x_1, \ldots, x_{n-1}) = -
\frac{(\phi_{\pmb{a}})_{x_i}(x_1, \ldots, x_{n-1}, \vartheta(x_1, \ldots, x_{n-1}))}%
{(\phi_{\pmb{a}})_{x_n}(x_1, \ldots, x_{n-1}, \vartheta(x_1, \ldots, x_{n-1}))}.
\end{equation}
Let us consider a function $\gamma_i$ on $U$ given by
\[
\gamma_i(x_1, \ldots, x_{n-1}) = \begin{cases}
0 & \text{if $(x_1, \ldots, x_{n-1}) \in U \cap \Theta_{\pmb{a}'}$},\\
\vartheta_{x_i}(x_1, \ldots, x_{n-1}) & \text{if $(x_1, \ldots, x_{n-1}) \in U \setminus \Theta_{\pmb{a}'}$}.
\end{cases}
\]
Then, by using \eqref{eqn:prop:theta:tangent:1} and \eqref{eqn:prop:theta:tangent:2}, 
it is easy to see that $\gamma_i$ is continuous on $U$.
Thus the claim follows.
\end{proof}

The above claim shows that $\Theta_{\pmb{a}}$ has the unique supporting hyperplane at $(b_1, \ldots, b_n)$ and
it is given by $x_n = 0$.
\end{proof}

\begin{Corollary}
\label{cor:tangent:point:interior}
We assume that $a_0 < 1$ and $a_0 + a_1 + \cdots + a_n \geq 1$.
Let $\alpha_1, \ldots, \alpha_n \in \RR_{>0}$ and $(b_1, \ldots, b_n) \in \Theta_{\pmb{a}}$ such that
\[
\alpha_1 b_1 + \cdots + \alpha_n b_n  = \min \{ \alpha_1 x_1 + \cdots + \alpha_n x_n \mid (x_1, \ldots, x_n) \in \Theta_{\pmb{a}} \}.
\]
Then $(b_1, \ldots, b_n) \not\in \partial(\Delta_n)$.
\end{Corollary}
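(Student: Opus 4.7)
The plan is to prove this by contradiction. Suppose $(b_1,\ldots,b_n)\in\partial(\Delta_n)$ and set $b_0:=1-b_1-\cdots-b_n\geq 0$; then at least one of $b_0,b_1,\ldots,b_n$ vanishes. Since $a_0<1$, we have $\varphi_{\pmb{a}}(1,0,\ldots,0)=\log a_0<0$, so $(0,\ldots,0)\notin\Theta_{\pmb{a}}$, and consequently there exists $j\in\{1,\ldots,n\}$ with $b_j>0$. I would then construct an explicit perturbation $\pmb{b}(t)=(b_1(t),\ldots,b_n(t))$ of $(b_1,\ldots,b_n)$ that, for all small enough $t>0$, lies in $\Theta_{\pmb{a}}$ and strictly decreases the linear objective $\alpha_1 x_1+\cdots+\alpha_n x_n$, contradicting the minimality of $(b_1,\ldots,b_n)$.

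The perturbation splits into two cases.
\textbf{Case 1}: some $b_i=0$ with $i\in\{1,\ldots,n\}$. Since $b_j>0=b_i$, we have $j\neq i$. Choose $\epsilon>0$ with $\epsilon<\min(1,\alpha_j/\alpha_i)$ and set $b_i(t):=t\epsilon$, $b_j(t):=b_j-t$, $b_k(t):=b_k$ for $k\neq i,j$.
\textbf{Case 2}: $b_i>0$ for all $i\in\{1,\ldots,n\}$, so necessarily $b_0=0$. Set $b_j(t):=b_j-t$ and $b_k(t):=b_k$ for $k\neq j$.
Writing $b_0(t):=1-b_1(t)-\cdots-b_n(t)$, one checks that for $t>0$ sufficiently small all coordinates are nonnegative and hence $\pmb{b}(t)\in\Delta_n$.

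The crucial step is the expansion of $\varphi_{\pmb{a}}(b_0(t),b_1(t),\ldots,b_n(t))$ near $t=0$. The summand $-x\log x$ applied to a coordinate that leaves $0$ (namely $b_i(t)=t\epsilon$ in Case 1, or $b_0(t)=t$ in Case 2) contributes a strictly positive term of order $t\log(1/t)$, whereas every other summand changes only by $O(t)$ thanks to smoothness at a positive coordinate. Therefore, for $t>0$ small, $\varphi_{\pmb{a}}(b_0(t),b_1(t),\ldots,b_n(t))>\varphi_{\pmb{a}}(b_0,b_1,\ldots,b_n)\geq 0$, so $\pmb{b}(t)\in\Theta_{\pmb{a}}$. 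Meanwhile the objective changes by $t(\epsilon\alpha_i-\alpha_j)<0$ in Case 1 and by $-t\alpha_j<0$ in Case 2, yielding the desired contradiction.

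The only delicate point is the Taylor-type estimate for $\varphi_{\pmb{a}}$ at a boundary point of $\Delta_n$: one needs the derivative blow-up $(-x\log x)'=-\log x-1\to+\infty$ as $x\to 0^+$ to dominate the finite first-order contributions from the other coordinates. This is a routine calculation but must be laid out with care. Note that Proposition~\ref{prop:theta:tangent} cannot be invoked as a black box here because its hypothesis $\#\{i\mid b_i=0\}\leq 1$ is not automatic at a minimizer; the perturbation argument above has the advantage of treating single-zero and multiple-zero scenarios uniformly.
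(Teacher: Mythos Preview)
Your argument is correct and takes a genuinely different route from the paper's. The paper proceeds by induction on $n$: it first disposes of the degenerate case $\vert\pmb{a}\vert=1$, and for $\vert\pmb{a}\vert>1$ it assumes $b_i=0$ for some $i\geq 1$, uses the induction hypothesis on $\Theta_{(a_0,\ldots,\widehat{a_i},\ldots,a_n)}$ to force $\#\{k\mid b_k=0\}=1$, and then invokes Proposition~\ref{prop:theta:tangent} to obtain a \emph{unique} supporting hyperplane of $\Theta_{\pmb{a}}$ at $(b_1,\ldots,b_n)$, namely $x_i=0$ (or $x_1+\cdots+x_n=1$ when $b_0=0$), which is incompatible with the minimizing hyperplane $\sum\alpha_k x_k=\text{const}$ having all $\alpha_k>0$. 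Your approach bypasses both the induction and Proposition~\ref{prop:theta:tangent}: the explicit perturbation exploiting $(-x\log x)'\to+\infty$ as $x\to 0^+$ shows directly that any boundary point of $\Delta_n$ lying in $\Theta_{\pmb{a}}$ can be pushed into $\Theta_{\pmb{a}}$ while strictly decreasing the objective. This is more elementary and, as you note, handles the multiple-zero situation uniformly (in Case~1 with $b_0=0$ your $b_0(t)=t(1-\epsilon)$ contributes an additional positive $t\log(1/t)$ term, which only helps). The paper's route, by contrast, has the advantage of identifying the supporting hyperplane structure of $\partial\Theta_{\pmb{a}}$ explicitly, information that is reused in the proof of the subsequent theorem on the non-existence of weak Zariski decompositions.
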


\begin{proof}
We prove it by induction on $n$.
If $n=1$, then the assertion is obvious, so that we may assume $n > 1$.
If $a_0 + \cdots + a_n = 1$, then
\[
\Theta_{\pmb{a}} = \left\{ \left(\frac{a_1}{a_0 + \cdots + a_n}, \ldots, \frac{a_n}{a_0 + \cdots + a_n} \right) \right\}.
\]
In this case, the assertion is also obvious. Thus we may assume that $a_0 + \cdots + a_n > 1$.

We assume that $b_i = 0$ for some $1 \leq i \leq n$.
Then, since $\Theta_{\pmb{a}} \cap \{ x_i = 0 \} \not= \emptyset$, we have
\[
a_1 + \cdots + a_{i-1} + a_{i+1} + \cdots + a_n \geq 1.
\]
Thus, by the hypothesis of induction, 
\[
b_1 \not= 0, \ldots, b_{i-1} \not= 0, b_{i+1} \not= 0, \ldots, b_n \not= 0, b_1 + \cdots + b_n \not= 1.
\]
Therefore, by Proposition~\ref{prop:theta:tangent},
we have
the unique supporting hyperplane $x_i = 0$ of $\Theta_{\pmb{a}}$ at $(b_1, \ldots, b_n)$. 
On the other hand, $\alpha_1 x_1 + \cdots + \alpha_n x_n = \alpha_1 b_1 + \cdots + \alpha_n b_n$ is
also a supporting hyperplane of $\Theta_{\pmb{a}}$ at $(b_1, \ldots, b_n)$.
This is a contradiction. 

Next we assume that $b_1 + \cdots + b_n = 1$.
Since $b_i \not= 0$ for all $i$, by Proposition~\ref{prop:theta:tangent},
the unique supporting hyperplane of $\Theta_{\pmb{a}}$ at $(b_1, \ldots, b_n)$
is $x_1 + \cdots + x_n = 1$, which yields $\alpha_1 = \cdots = \alpha_n$, and hence
$\Theta_{\pmb{a}} \subseteq \{ x_1 + \cdots + x_n = 1 \}$.
This is a contradiction because
\[
\left(\frac{a_1}{a_0 + \cdots + a_n}, \ldots, \frac{a_n}{a_0 + \cdots + a_n} \right) \in \Theta_{\pmb{a}},
\]
as required.
\end{proof}

\begin{Theorem}
We assume that $n \geq 2$ and $\overline{D}_{\pmb{a}}$ is big.
Then $\overline{D}_{\pmb{a}}$ is nef if and only if 
there is a birational morphism $f : X \to \PP^n_{\ZZ}$ of projective, generically smooth and normal arithmetic varieties such that
$f^*(\overline{D}_{\pmb{a}})$ admits  a weak Zariski decomposition on $X$.
\end{Theorem}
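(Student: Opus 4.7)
The ``only if'' direction is immediate: when $\overline{D}_{\pmb{a}}$ is nef, choosing $f = \mathrm{id}_{\PP^n_{\ZZ}}$, $\overline{P} = \overline{D}_{\pmb{a}}$ and $\overline{N} = 0$ trivially satisfies the three defining conditions of a weak Zariski decomposition, since condition~(3) reduces to $0 \leq \mu_{\Gamma}(\overline{D}_{\pmb{a}})$.

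For the ``if'' direction, I will assume $\overline{D}_{\pmb{a}}$ is big but not nef, so by Theorem~\ref{thm:positivity:D:a:b} we have $|\pmb{a}| > 1$ together with $a_i < 1$ for some $i$. The automorphism of $\PP^n_{\ZZ}$ swapping $T_0 \leftrightarrow T_i$ transforms $\overline{D}_{\pmb{a}}$, after twisting by the principal divisor $\widehat{(z_i)}$ and passing to the affine chart centered at $H_i$, into an arithmetic divisor of the same form with a permuted weight vector; neither of these operations affects the existence of a weak Zariski decomposition on any birational model, so I may assume throughout that $a_0 < 1$. This is precisely the hypothesis, together with $|\pmb{a}| \geq 1$, that activates Corollary~\ref{cor:tangent:point:interior}.

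Assuming for contradiction that some birational morphism $f : X \to \PP^n_{\ZZ}$ admits a weak Zariski decomposition $f^*\overline{D}_{\pmb{a}} = \overline{P} + \overline{N}$, the computation carried out in the proof of Lemma~\ref{lem:birat:Zariski:P:n} gives, for every horizontal prime divisor $D_j \subset X$,
\[
\mult_{D_j}(N) = \mu_{D_j}(f^*\overline{D}_{\pmb{a}}) = \min\left\{ \sum_{i=0}^n x_i a_{ij} : (x_0, \ldots, x_n) \in \widetilde{\Theta}_{\pmb{a}} \right\},
\]
where $a_{ij} = \mult_{D_j}(f^*_{\QQ} H_i)$. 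Corollary~\ref{cor:tangent:point:interior} places the minimizer in the interior of $\Delta_n$ whenever $(a_{0j}, \ldots, a_{nj})$ has all positive entries; applied to the strict transforms $\widetilde{H}_k$ of $H_k$ for $k = 1, \ldots, n$ it yields $\mult_{\widetilde{H}_k}(N) = \min_{\Theta_{\pmb{a}}} x_k > 0$, so every such $\widetilde{H}_k$ appears in $N$ with strictly positive multiplicity.

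The technically hardest step will be to extract a contradiction from these constraints together with the arithmetic nefness and bigness of $\overline{P}$. My intended route is to test $\overline{P}$ along the horizontal curve $\gamma_0 = H_1 \cap \cdots \cap H_n \subset \PP^n_{\ZZ}$, for which the arithmetic degree computation underlying Theorem~\ref{thm:positivity:D:a:b} gives $\adeg(\overline{D}_{\pmb{a}}|_{\gamma_0}) = \tfrac{1}{2}\log a_0 < 0$. By Lemma~\ref{lem:birat:Zariski:P:n} I may replace $X$ by a further blow-up on which $f$ is an isomorphism over a neighborhood of $\gamma_0$, so that the strict transform $\widetilde{\gamma}_0$ inherits this negative arithmetic degree; nefness of $\overline{P}$ then forces $\adeg(\overline{P}|_{\widetilde{\gamma}_0}) \geq 0$ and hence $\adeg(\overline{N}|_{\widetilde{\gamma}_0}) < 0$, which should collide with the effectivity of $\overline{N}$. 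The principal obstacle I anticipate is that $\widetilde{\gamma}_0 \subset \widetilde{H}_k$ for every $k \geq 1$ and each $\widetilde{H}_k$ is a component of $N$, so $\widetilde{\gamma}_0 \subset \Supp(N)$ and the restriction $\overline{N}|_{\widetilde{\gamma}_0}$ is not directly defined. I plan to bypass this either by deforming $\widetilde{\gamma}_0$ to a generic nearby horizontal arithmetic curve meeting $\Supp(N)$ properly and appealing to continuity of arithmetic degree, or else by a global volume argument: the chain $\adeg(\overline{P}^{n+1}) = \avol(\overline{P}) \leq \avol(\overline{D}_{\pmb{a}})$ combined with the integral formula in Theorem~\ref{thm:positivity:D:a:b}(1) and the interior-minimizer data from Corollary~\ref{cor:tangent:point:interior} should yield a quantitative comparison that no admissible pair $(\overline{P},\overline{N})$ can satisfy.
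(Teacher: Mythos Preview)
Your setup is fine, but the argument breaks down after the third paragraph.

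First, your invocation of Corollary~\ref{cor:tangent:point:interior} for the strict transforms $\widetilde{H}_k$ is illegitimate. For $D_j = \widetilde{H}_k$ one has $a_{ij} = \delta_{ik}$, so the linear form to be minimized over $\Theta_{\pmb{a}}$ is simply $x_k$; in the notation of the corollary this means $\alpha_i = 0$ for $i \neq k$, violating the hypothesis $\alpha_1,\ldots,\alpha_n \in \RR_{>0}$. And the conclusion you draw is in fact false in general: if for instance $a_1 > 1$, then $(1,0,\ldots,0) \in \Theta_{\pmb{a}}$, whence $\min_{\Theta_{\pmb{a}}} x_k = 0$ for every $k \geq 2$, so those $\widetilde{H}_k$ need not appear in $N$ at all.

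Second, neither of your proposed endgames closes the argument. Deforming $\widetilde{\gamma}_0$ to the closure of a nearby rational point $(1:\epsilon_1:\cdots:\epsilon_n)$ does not keep $\adeg(\overline{D}_{\pmb{a}}\vert_{\gamma_\epsilon})$ negative: shrinking the $\epsilon_i$ forces large denominators, and the non-archimedean height contributions dominate the archimedean term $\tfrac12\log a_0$. There is no continuity of arithmetic degree in the sense you need. The volume alternative is left entirely unspecified, and the bare inequality $\avol(\overline{P}) \leq \avol(\overline{D}_{\pmb{a}})$ contains no contradiction.

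The paper's route is substantially different. Using Lemma~\ref{lem:birat:Zariski:P:n} one passes to a model dominating the blow-up of $\PP^n_{\ZZ}$ along $H_1 \cap \cdots \cap H_n$, arranged so that the total transform of $\sum H_i$ has normal crossings. One then finds a rational point $\xi'$ over $(1:0:\cdots:0)$ lying on exactly two components of the pullback: the strict transform $L_n$ and a single exceptional divisor $E$. This gives $\mult_{\xi'}(N) = \mult_E(N) + \mult_{L_n}(N)$, while Proposition~\ref{prop:mu:properties} yields $\mu_{\xi'}(f^*\overline{D}_{\pmb{a}}) \leq \mult_{\xi'}(N)$. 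Computing all three asymptotic multiplicities as minima of explicit linear forms on $\Theta_{\pmb{a}}$ via Lemma~\ref{lem:generator:mu}, one is forced to a single boundary point of $\Theta_{\pmb{a}}$ admitting three distinct supporting hyperplanes. Corollary~\ref{cor:tangent:point:interior}, applied to the form with \emph{all} coefficients positive (namely the one coming from $\mu_{\xi'}$), places this point off $\partial\Delta_n$, and then Proposition~\ref{prop:theta:tangent} gives the contradiction via uniqueness of the supporting hyperplane. This convex-geometric punchline is the missing idea in your proposal.
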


\begin{proof}
If $\overline{D}_{\pmb{a}}$ is nef, then $\overline{D}_{\pmb{a}} = \overline{D}_{\pmb{a}} + (0,0)$ is a
weak Zariski decomposition. Next we assume that
$\overline{D}_{\pmb{a}}$ is not nef and there is a birational morphism $f : X \to \PP^n_{\ZZ}$ of projective, generically smooth and normal arithmetic varieties such that
$f^*(\overline{D}_{\pmb{a}})$ admits  a weak Zariski decomposition $f^*(\overline{D}_{\pmb{a}}) = \overline{P} + \overline{N}$ on $X$.
By our assumptions, $a_0 + \cdots + a_n > 1$ and $a_i < 1$ for some $i$.
Renumbering the homogeneous coordinate $T_0, \ldots, T_n$, we may assume $a_0 < 1$.
Let $\xi$ be the generic point of $H_1 \cap \cdots \cap H_n$, that is, $\xi = (1:0:\cdots:0) \in \PP^n(\QQ)$.
Let $L_i$  be the strict transform of $H_i$ by $f$ for $i=0, \ldots, n$.
We denote the birational morphism $X_{\QQ} \to \PP^n_{\QQ}$ by $f_{\QQ}$.
Let $f' : X' \to \PP^n_{\ZZ}$ be the blowing-up along $H_1 \cap \cdots \cap H_n$.
By using Lemma~\ref{lem:birat:Zariski:P:n} and \cite{Hiro}, we may assume the following:
\begin{enumerate}
\renewcommand{\labelenumi}{(\arabic{enumi})}
\item
Let $\Sigma$ be the
exceptional set of $f_{\QQ} : X_{\QQ} \to \PP^n_{\QQ}$.
Then $\Sigma$ is a divisor on $X_{\QQ}$ and $(\Sigma + (L_0)_{\QQ} + \cdots + (L_n)_{\QQ})_{\operatorname{red}}$ is a normal crossing divisor on $X_{\QQ}$.

\item
There is a birational morphism $g : X \to X'$ such that the following diagram is commutative:
\[
\xymatrix{
X \ar[dr]^{g} \ar[dd]_{f} & \\
 & X' \ar[dl]^{f'} \\
\PP^n_{\ZZ} &
}
\]
\end{enumerate}

\begin{Claim}
There are $\xi' \in X(\QQ)$ and a reduced and irreducible divisor $E$ on $X_{\QQ}$ with the following properties:
\begin{enumerate}
\renewcommand{\labelenumi}{(\alph{enumi})}
\item
$f_{\QQ}(\xi') = \xi$ and $\xi' \in E \cap (L_n)_{\QQ}$.

\item
$E$ and $(L_n)_{\QQ}$ is non-singular at $\xi'$.

\item
$E$ is exceptional with respect to $f_{\QQ} : X_{\QQ} \to \PP^n_{\QQ}$.

\item
There are positive integers $\alpha_1, \ldots, \alpha_n$ such that
\begin{align*}
\hspace{3em} f_{\QQ}^*(H_i) & = \alpha_i E + (\text{the sum of divisors which do not pass through $\xi'$}) \\
\intertext{for $i=1, \ldots, n-1$ and}
f_{\QQ}^*(H_n) & = (L_n)_{\QQ} + \alpha_n E + (\text{the sum of divisors which do not pass through $\xi'$}).
\end{align*}
\end{enumerate}
\end{Claim}

\begin{proof}
Let $L'_n$ be the strict transform of $H_n$ by $f'$ and $\Sigma'$ the exceptional set of $f'_{\QQ} : X'_{\QQ} \to \PP^n_{\QQ}$.
Then $\Sigma' = \PP^{n-1}_{\QQ}$ and $D' := (L'_n)_{\QQ} \cap \Sigma' = \PP^{n-2}_{\QQ}$.
Let $h : L_n \to L'_n$ and $h_{\QQ} : (L_n)_{\QQ} \to (L_n')_{\QQ}$ be the birational morphisms induced by $g : X \to X'$ and
$g_{\QQ} : X_{\QQ} \to X'_{\QQ}$ respectively.
Let $D$ be the strict transformation of $D'$ by $h_{\QQ}$.
As before, let $\Sigma$ be the exceptional set of $f_{\QQ} : X_{\QQ} \to \PP^n_{\QQ}$.
Let
\[
(\Sigma + (L_0)_{\QQ} + \cdots + (L_n)_{\QQ})_{\operatorname{red}} = (L_0)_{\QQ} + \cdots + (L_n)_{\QQ} + E_0 + \cdots + E_l
\]
be the irreducible decomposition such that $E_i$'s are exceptional with respect to $f_{\QQ}$.
Since $D \subseteq (L_n)_{\QQ} \cap \Sigma$,
there is $E_i$ such that $D \subseteq (L_n)_{\QQ} \cap E_i$. 
Renumbering $E_0, \ldots, E_l$, we may assume that $E_i = E_l$.
As $(L_0)_{\QQ} + \cdots + (L_n)_{\QQ} + E_0 + \cdots + E_l$ is a normal crossing divisor on $X_{\QQ}$,
we have
\[
\begin{cases}
D \cap \Sing((L_n)_{\QQ}) \subsetneq D,\  D \cap \Sing(E)  \subsetneq D,\\
D \cap (L_i)_{\QQ} \subsetneq D\ (i=0, \ldots, n-1), \\ 
D \cap E_j \subsetneq D\ (j=0, \ldots, l-1).
\end{cases}
\]
Note that $D(\QQ)$ is dense in $D$ because $D \to D'$ is birational.
Thus we can find $\xi' \in D(\QQ)$ such that
\[
\xi' \not\in (D \cap \Sing((L_n)_{\QQ})) \cup (D \cap \Sing(E)) \cup \bigcup_{i=0}^{n-1} (D \cap (L_i)_{\QQ}) \cup \bigcup_{j=0}^{l-1} (D \cap E_j).
\]
Therefore the claim follows.
\end{proof}

Note that 
\begin{multline*}
f_{\QQ}^*(l H_0 +(z_1^{e_1}\cdots z_n^{e_n})) = f_{\QQ}^*((l-e_1-\cdots-e_n)  H_0 + e_1 H_1 + \cdots + e_n H_n) \\
= e_n (L_n)_{\QQ} + (\alpha_1 e_1 + \cdots + \alpha_n e_n)E \\
+ (\text{the sum of divisors which
do not pass through $\xi'$}).
\end{multline*}
Therefore, by Lemma~\ref{lem:generator:mu},
\[
\begin{cases}
\mu_{\xi'}(f^*(\overline{D}_{\pmb{a}})) = \min \{ \alpha_1 x_1  + \cdots + \alpha_{n-1} x_{n-1} + (\alpha_n+1)x_n \mid (x_1, \cdots, x_n) \in \Theta_{\pmb{a}} \}, \\
\mu_E(f^*(\overline{D}_{\pmb{a}}))  = \min \{ \alpha_1 x_1 + \cdots + \alpha_n x_n  \mid (x_1, \cdots, x_n) \in \Theta_{\pmb{a}} \}, \\
\mu_{L_n}(f^*(\overline{D}_{\pmb{a}}))  = \min \{ x_n  \mid (x_1, \ldots, x_n) \in \Theta_{\pmb{a}} \}
\end{cases}
\]
Further, 
\[
\mult_{\xi'}(N) = \mult_{E}(N) + \mult_{L_n}(N) \leq \mu_E(f^*(\overline{D}_{\pmb{a}}))  + \mu_{L_n}(f^*(\overline{D}_{\pmb{a}})).
\]
By (2) and (5) in Proposition~\ref{prop:mu:properties},
\[
0 = \mu_{\xi'}(\overline{P}) \geq \mu_{\xi'}(f^*(\overline{D}_{\pmb{a}})) - \mult_{\xi'}(N).
\]
Therefore, if we set
\[
\begin{cases}
A = \min \{ \alpha_1 x_1  + \cdots + \alpha_{n-1} x_{n-1} + (\alpha_n+1)x_n \mid (x_1, \cdots, x_n) \in \Theta_{\pmb{a}} \},\\
B = \min \{ \alpha_1 x_1 + \cdots + \alpha_n x_n  \mid (x_1, \cdots, x_n) \in \Theta_{\pmb{a}} \},\\
C =  \min \{ x_n  \mid (x_1, \ldots, x_n) \in \Theta_{\pmb{a}} \},
\end{cases}
\]
then we have $0 \geq A - B - C$.
We choose $(b_1, \ldots, b_n) \in \Theta_{\pmb{a}}$ such that 
\[
A = \alpha_1 b_1  + \cdots + \alpha_{n-1} b_{n-1} + (\alpha_n+1)b_n.
\] 
Thus, as $\alpha_1 b_1 + \cdots + \alpha_n b_n \geq B$  and $b_n \geq C$, we have
\begin{multline*}
0 \geq A - B - C \\
\geq  \alpha_1 b_1  + \cdots + \alpha_{n-1} b_{n-1} + (\alpha_n+1)b_n - (\alpha_1 b_1 + \cdots + \alpha_n b_n) - b_n = 0,
\end{multline*}
which implies $\alpha_1 b_1 + \cdots + \alpha_n b_n = B$  and $b_n = C$.
On the other hand, by Corollary~\ref{cor:tangent:point:interior}, $(b_1, \ldots, b_n) \not\in \partial(\Delta_n)$, and hence
there is a unique supporting hyperplane of $\Theta_{\pmb{a}}$ at $(b_1, \ldots, b_n)$ by Proposition~\ref{prop:theta:tangent}.
This is a contradiction because 
\[
\begin{cases}
\alpha_1 x_1  + \cdots + \alpha_{n-1} x_{n-1} + (\alpha_n+1)x_n = A, \\
\alpha_1 x_1  + \cdots + \alpha_{n-1} x_{n-1} + \alpha_nx_n = B,\\
x_n = C
\end{cases}
\]
are distinct supporting hyperplanes of $\Theta_{\pmb{a}}$ at $(b_1, \ldots, b_n)$.
\end{proof}

\section{Fujita's approximation of $\overline{D}_{\pmb{a}}$}
\label{sec:Fujita:app}
Fujita's approximation of arithmetic divisors has established by Chen and Yuan (cf. \cite{HChenFujita},
\cite{YuanVol}, \cite{MoArLin} and \cite{MoArZariski}).
In this section, we consider Fujita's approximation of $\overline{D}_{\pmb{a}}$ in terms of rational
interior points of $\Theta_{\pmb{a}}$.

First of all, we fix notation.
Let $\pmb{x}_1, \ldots, \pmb{x}_r  \in \RR^n$ and $\phi_1, \ldots, \phi_r \in \RR$.
We define a function $\phi_{(\pmb{x}_1, \phi_1), \ldots, (\pmb{x}_r, \phi_r)}$ on $\Theta = \Conv \{ \pmb{x}_1, \ldots, \pmb{x}_r \}$ to be
\[
\phi_{(\pmb{x}_1, \phi_1), \ldots, (\pmb{x}_r, \phi_r)}(\pmb{x}) := \max \left\{ \sum_{i=1}^r \lambda_i \phi_i \ \left| \ 
 \begin{array}{l}  \pmb{x} = \sum_{i=1}^r \lambda_i \pmb{x}_i,  \\
 \lambda_1,\ldots, \lambda_r \in \RR_{\geq 0},\ \sum_{i=1}^r \lambda_i = 1 \\
\end{array}\right\}\right..
\]
Note that 
\[
\phi_{(\pmb{x}_1, \phi_1), \ldots, (\pmb{x}_r, \phi_r)}(\pmb{x}) = \max \{ \phi \in \RR \mid (\pmb{x}, \phi) \in \Conv \{ (\pmb{x}_1, \phi_1), \ldots, (\pmb{x}_r, \phi_r) \} \subseteq   \RR^n \times \RR \}.
\]
Thus we can easily see that $\phi_{(\pmb{x}_1, \phi_1), \ldots, (\pmb{x}_r, \phi_r)}$ is a continuous 
function on $\Theta$ (cf. \cite{GKR}).

Let $\varphi$ be a continuous concave function on $\Theta$.
Then $\phi_{(\pmb{x}_1, \varphi(\pmb{x}_1)), \ldots, (\pmb{x}_r, \varphi(\pmb{x}_r))} \leq \varphi$.
Moreover, for a positive number $\epsilon$, if we add sufficiently many points
$\pmb{x}_{r+1}, \ldots, \pmb{x}_m  \in \Theta$ to $\{ \pmb{x}_1, \ldots, \pmb{x}_r \}$, then
\[
\varphi - \epsilon \leq \phi_{(\pmb{x}_1, \varphi(\pmb{x}_1)), \ldots, (\pmb{x}_r, \varphi(\pmb{x}_r)), (\pmb{x}_{r+1}, \varphi(\pmb{x}_{r+1})), \ldots,(\pmb{x}_m, \varphi(\pmb{x}_m))} \leq \varphi.
\]

\bigskip
From now on,
we use the same notation as in Section~\ref{sec:fund:prop:func}.
We assume that $\overline{D}_{\pmb{a}}$ is big.

\renewcommand{\theClaim}{\arabic{section}.\arabic{Theorem}}
\addtocounter{Theorem}{1}
\begin{Claim}
\label{claim:sec:Fujita:app:1}
For a given positive number $\epsilon$, we can find rational interior points
$\pmb{x}_1, \ldots, \pmb{x}_r$ of $\Theta_{\pmb{a}}$, that is,
$\pmb{x}_1, \ldots, \pmb{x}_r \in \operatorname{Int}(\Theta_{\pmb{a}}) \cap \QQ^n$ such that
\[
\frac{(n+1)!}{2} \int_{\Theta} \phi_{(\pmb{x}_1, \varphi_{\pmb{a}}(\widetilde{\pmb{x}}_1)), \ldots, (\pmb{x}_r, \varphi_{\pmb{a}}(\widetilde{\pmb{x}}_r))}(\pmb{x}) d\pmb{x} > \avol(\overline{D}_{\pmb{a}}) - \epsilon,
\]
where $\Theta = \Conv \{ \pmb{x}_1, \ldots, \pmb{x}_r \}$.
\end{Claim}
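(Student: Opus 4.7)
The plan is to combine the integral formula
\[
\avol(\overline{D}_{\pmb{a}}) = \frac{(n+1)!}{2}\int_{\Theta_{\pmb{a}}} \varphi_{\pmb{a}}(\widetilde{\pmb{x}})\, d\pmb{x}
\]
from Theorem~\ref{thm:positivity:D:a:b}~(1) with two successive approximations: first shrink $\Theta_{\pmb{a}}$ to a rational polytope $\Theta\subseteq \operatorname{Int}(\Theta_{\pmb{a}})$ whose complement has small volume; then use the piecewise linear minorant construction recalled right before the claim to replace $\varphi_{\pmb{a}}(\widetilde{\cdot})$ by $\phi$ uniformly on $\Theta$.

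First I would verify that $\varphi_{\pmb{a}}(\widetilde{\cdot})$ is a continuous concave function on $\Delta_n$. Continuity is clear with the convention $0\log 0=0$, and concavity comes from the fact that the entropy $-\sum_i x_i\log x_i$ is concave on $\RR_{\geq 0}^{n+1}$ (a Hessian computation on the positive orthant, extended by continuity; equivalently, the Legendre--Fenchel representation noted in the remark at the end of Section~\ref{sec:fund:prop:func}), that $\sum_i x_i\log a_i$ is linear, and that concavity is preserved under the affine substitution $\pmb{x}\mapsto\widetilde{\pmb{x}}$. Since $\overline{D}_{\pmb{a}}$ is big, Theorem~\ref{thm:positivity:D:a:b}~(4) gives $|\pmb{a}|>1$, hence $\operatorname{vol}(\Theta_{\pmb{a}})>0$ and $\Theta_{\pmb{a}}$ has nonempty interior; set $M:=1+\max_{\Theta_{\pmb{a}}}\varphi_{\pmb{a}}(\widetilde{\cdot})$.

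For the polytope approximation, I would use the standard fact that any compact convex set with nonempty interior can be exhausted by rational polytopes sitting strictly inside it. Concretely, fix any $\pmb{p}\in\operatorname{Int}(\Theta_{\pmb{a}})$; for $t\in(0,1)$ the homothetic image $\Theta_{\pmb{a}}^t:=\pmb{p}+t(\Theta_{\pmb{a}}-\pmb{p})$ is a compact convex subset of $\operatorname{Int}(\Theta_{\pmb{a}})$ with $\operatorname{vol}(\Theta_{\pmb{a}}^t)=t^n\operatorname{vol}(\Theta_{\pmb{a}})$, so $\operatorname{vol}(\Theta_{\pmb{a}}\setminus \Theta_{\pmb{a}}^t)\to 0$ as $t\to 1$. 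Taking $\Theta$ to be the convex hull of the lattice points $\Theta_{\pmb{a}}^t\cap \frac{1}{N}\ZZ^n$ for $N$ large, I obtain a rational polytope $\Theta=\Conv\{\pmb{x}_1,\ldots,\pmb{x}_s\}\subseteq \operatorname{Int}(\Theta_{\pmb{a}})$ with each $\pmb{x}_j\in \operatorname{Int}(\Theta_{\pmb{a}})\cap \QQ^n$ and $\operatorname{vol}(\Theta_{\pmb{a}}\setminus \Theta)<\epsilon/((n+1)!M)$, which yields
\[
\frac{(n+1)!}{2}\int_{\Theta}\varphi_{\pmb{a}}(\widetilde{\pmb{x}})\, d\pmb{x}> \avol(\overline{D}_{\pmb{a}})-\frac{\epsilon}{2}.
\]

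Finally I would apply the approximation statement preceding the claim to the continuous concave function $\varphi_{\pmb{a}}(\widetilde{\cdot})$ on $\Theta$: by adjoining finitely many further rational points $\pmb{x}_{s+1},\ldots,\pmb{x}_r\in \Theta\cap \QQ^n\subseteq \operatorname{Int}(\Theta_{\pmb{a}})\cap \QQ^n$ (which does not enlarge the convex hull $\Theta$), the resulting piecewise linear minorant $\phi:=\phi_{(\pmb{x}_1,\varphi_{\pmb{a}}(\widetilde{\pmb{x}}_1)),\ldots,(\pmb{x}_r,\varphi_{\pmb{a}}(\widetilde{\pmb{x}}_r))}$ satisfies
\[
\varphi_{\pmb{a}}(\widetilde{\pmb{x}}) - \frac{\epsilon}{(n+1)!\,\operatorname{vol}(\Theta_{\pmb{a}})}\leq \phi(\pmb{x})\leq \varphi_{\pmb{a}}(\widetilde{\pmb{x}})
\]
on $\Theta$, and integrating and combining with the previous estimate produces the desired inequality. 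The main obstacle is the first approximation: arranging a rational polytope with interior vertices that captures essentially all the mass of $\varphi_{\pmb{a}}(\widetilde{\cdot})$ on $\Theta_{\pmb{a}}$, which rests crucially on the nondegeneracy $|\pmb{a}|>1$ so that $\operatorname{Int}(\Theta_{\pmb{a}})\neq\emptyset$ and rational points are dense in it.
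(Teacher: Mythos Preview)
Your proposal is correct and follows essentially the same two-step approach as the paper: first exhaust $\Theta_{\pmb{a}}$ from within by a rational polytope $\Theta=\Conv\{\pmb{x}_1,\ldots,\pmb{x}_{s}\}$ with vertices in $\operatorname{Int}(\Theta_{\pmb{a}})\cap\QQ^n$ so that $\frac{(n+1)!}{2}\int_{\Theta}\varphi_{\pmb{a}}(\widetilde{\pmb{x}})\,d\pmb{x}$ is close to $\avol(\overline{D}_{\pmb{a}})$, and then adjoin further rational points of $\Theta$ to force the piecewise linear minorant $\phi$ uniformly close to $\varphi_{\pmb{a}}(\widetilde{\cdot})$ on $\Theta$. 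The paper states the first step without justification, whereas you supply the details (homothetic shrinking, fine lattice, explicit volume bookkeeping via $M$), so your argument is a fleshed-out version of the same proof.
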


\begin{proof}
First of all, we can find $\pmb{x}_1, \ldots, \pmb{x}_{r'} \in \operatorname{Int}(\Theta_{\pmb{a}}) \cap \QQ^n$ such that
\[
\frac{(n+1)!}{2} \int_{\Theta} \varphi_{\pmb{a}}(\widetilde{\pmb{x}}) d\pmb{x} > \avol(\overline{D}_{\pmb{a}}) - \epsilon,
\]
where $\Theta = \Conv \{ \pmb{x}_1, \ldots, \pmb{x}_{r'} \}$. Thus, adding more points $\pmb{x}_{r'+1}, \ldots, \pmb{x}_{r} \in \Theta \cap \QQ^n$ to
$\{ \pmb{x}_1, \ldots, \pmb{x}_{r'} \}$, we have
\[
\frac{(n+1)!}{2} \int_{\Theta} \phi_{(\pmb{x}_1, \varphi_{\pmb{a}}(\widetilde{\pmb{x}}_1)), \ldots, (\pmb{x}_r, \varphi_{\pmb{a}}(\widetilde{\pmb{x}}_r))}(\pmb{x}) d\pmb{x} > \avol(\overline{D}_{\pmb{a}}) - \epsilon.
\]
\end{proof}

We choose a sufficiently small positive number $\delta$ such that
\begin{enumerate}
\renewcommand{\labelenumi}{(\alph{enumi})}
\item
$\Theta \subseteq \Theta_{e^{-\delta}\pmb{a}}$ and

\item
${\displaystyle
\frac{(n+1)!}{2} \int_{\Theta} \phi_{(\pmb{x}_1, \varphi_{e^{-\delta}\pmb{a}}(\widetilde{\pmb{x}}_1)), \ldots, (\pmb{x}_r, \varphi_{e^{-\delta}\pmb{a}}(\widetilde{\pmb{x}}_r))}(\pmb{x}) d\pmb{x} > \avol(\overline{D}_{\pmb{a}}) - \epsilon}$.
\end{enumerate}
We set $\pmb{a}' = e^{-\delta} \pmb{a}$.
By virtue of \cite[Theorem3.2.3]{MoArZariski}, we can find positive integer $l_0$ such that
\begin{enumerate}
\renewcommand{\labelenumi}{(\alph{enumi})}
\setcounter{enumi}{2}
\item
$\log \dist(H^0(lH_0) \otimes \CC; l_0 g_{\pmb{a}'}) \leq l_0 \delta$ and

\item
$l_0 \pmb{x}_1, \ldots, l_0 \pmb{x}_r \in \ZZ_{\geq 0}^n$.
\end{enumerate}
Let us consider the following $\ZZ$-module:
\[
V := \bigoplus_{i=1}^r \ZZ z^{l_0 \pmb{x}_i} \subseteq H^0(\PP^n_{\ZZ}, l_0H_0).
\]
Then we have a birational morphisms $\mu : Y \to \PP^n_{\ZZ}$ of projective, generically smooth and normal arithmetic varieties
such that the image of
\[
V \otimes_{\ZZ} \OO_Y \to \OO_Y(\mu^*(l_0H_0)) 
\]
is invertible, that is,
there is an effective Cartier divisor $F$ on $Y$ such that 
\[
V \otimes_{\ZZ} \OO_Y \to \OO_Y(\mu^*(l_0H_0)-F) 
\]
is surjective. Here we set
\[
\begin{cases}
Q := \mu^*(l_0 H_0) - F, \\
g_F := \mu^*\left( -\log \dist(V \otimes \CC; l_0 g_{\pmb{a}'}) + l_0 \delta\right), \\
g_Q := \mu^*\left(l_0 g_{\pmb{a}'} + \log \dist(V \otimes \CC; l_0 g_{\pmb{a}'})\right).
\end{cases}
\]

\addtocounter{Theorem}{1}
\begin{Claim}
\label{claim:sec:Fujita:app:2}
\begin{enumerate}
\renewcommand{\labelenumi}{(\roman{enumi})}
\item
$g_Q + g_F = \mu^*(l_0g_{\pmb{a}})$.

\item
$g_Q$ is a $Q$-Green function of $(C^{\infty} \cap \Tpsh)$-type and
$\overline{Q} := (Q, g_Q)$ is nef.

\item
$g_F$ is an $F$-Green function of $C^{\infty}$-type and  $g_F \geq 0$.

\item
If we set $\overline{P} = (P, g_P) = (1/l_0)\overline{Q}$, then, for $\pmb{e} \in l \Theta \cap \ZZ^n$,
$\mu^*(z^{\pmb{e}}) \in H^0(lP)$ and
\[
\vert \mu^*(z^{\pmb{e}}) \vert^2_{lg_P} \leq \exp\left(-l \phi_{(\pmb{x}_1, \varphi_{\pmb{a}'}(\widetilde{\pmb{x}}_1)), \ldots, (\pmb{x}_r, \varphi_{\pmb{a}'}(\widetilde{\pmb{x}}_r))}(\pmb{e}/l)\right).
\]
\end{enumerate}
\end{Claim}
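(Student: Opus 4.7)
Here is my plan. Part (i) is a direct calculation: since $g_{\pmb{a}'} = g_{\pmb{a}} - \delta$, one has $g_Q + g_F = \mu^{*}(l_0 g_{\pmb{a}'} + l_0\delta) = \mu^{*}(l_0 g_{\pmb{a}})$, which immediately yields the identity and also shows that once $g_Q$ has been identified as a $Q$-Green function, $g_F = \mu^{*}(l_0 g_{\pmb{a}}) - g_Q$ is automatically an $F$-Green function. Thus the three substantive issues are (a) the semipositive/PSH nature of $g_Q$ plus nefness of $\overline{Q}$, (b) smoothness and nonnegativity of $g_F$, and (c) the estimate in (iv).

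The cornerstone is to identify $g_Q$ with a Fubini–Study type metric. Let $\phi_1,\ldots,\phi_r$ be an orthonormal basis of $V\otimes\CC$ with respect to $\langle\cdot,\cdot\rangle_{l_0 g_{\pmb{a}'}}$; by Proposition~\ref{prop:cal:inner:product} one may simply take $\phi_i = z^{l_0\pmb{x}_i}/c_i$ with $c_i^2 = \langle z^{l_0\pmb{x}_i}, z^{l_0\pmb{x}_i}\rangle_{l_0 g_{\pmb{a}'}}$. Then $\dist(V\otimes\CC;l_0 g_{\pmb{a}'}) = \sum_i |\phi_i|^{2}e^{-l_0 g_{\pmb{a}'}}$, so
\[
g_Q = \mu^{*}\!\left(l_0 g_{\pmb{a}'} + \log \dist(V\otimes\CC;l_0g_{\pmb{a}'})\right) = \mu^{*}\!\left(\log \textstyle\sum_i |\phi_i|^{2}\right).
\]
Since $V\otimes\OO_Y\twoheadrightarrow\OO_Y(Q)$ is surjective, in any local trivialization of $Q$ the sections $\mu^{*}\phi_i$ correspond to holomorphic functions with no common zero, so $g_Q$ is a $C^{\infty}$ $Q$-Green function and $dd^c g_Q\ge 0$ (as the curvature of a quotient metric of the trivial hermitian bundle $(V\otimes\OO_Y,\langle\cdot,\cdot\rangle_{l_0 g_{\pmb{a}'}})$); this handles the $(C^{\infty}\cap\Tpsh)$ assertion of (ii). For nefness of $\overline{Q}$, one notes that $V$ is a $\ZZ$-lattice generating $\OO_Y(Q)$ and, after normalizing, provides small global sections in every fiber direction; equivalently, $\overline{Q}$ is the quotient of the trivial arithmetic bundle associated to the Euclidean lattice $V$, and quotients of nef hermitian vector bundles are nef in the Moriwaki sense. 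For (iii), smoothness of $g_F = \mu^{*}(l_0 g_{\pmb{a}}) - g_Q$ is inherited from $g_Q$, and $g_F\ge 0$ follows from $\dist(V\otimes\CC;l_0 g_{\pmb{a}'})\le \dist(H^{0}(l_0 H_0)\otimes\CC;l_0 g_{\pmb{a}'})\le e^{l_0\delta}$ by assumption~(c).

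The main computational step is (iv). Write $\pmb{e}/l = \sum_i \lambda_i \pmb{x}_i$ with $\lambda_i\ge 0$ and $\sum_i\lambda_i = 1$, so that $\log|z^{\pmb{e}}|^{2} = l\log|z^{\pmb{e}/l}|^{2} = l\sum_i\lambda_i \log|z^{\pmb{x}_i}|^{2} = (l/l_0)\sum_i\lambda_i\log|z^{l_0\pmb{x}_i}|^{2}$. Applying the concavity of $\log$ (the weighted AM–GM inequality) to $\sum_i|\phi_i|^{2}$ gives
\[
\log\textstyle\sum_i|\phi_i|^{2}\ \ge\ \sum_i\lambda_i\log|\phi_i|^{2}\ =\ (l_0/l)\log|z^{\pmb{e}}|^{2}\ -\ \sum_i\lambda_i\log c_i^{2}.
\]
Multiplying by $l/l_0$ and using the definition $g_P = (1/l_0)g_Q$, this rearranges to
\[
\log|\mu^{*}(z^{\pmb{e}})|^{2}_{lg_P}\ =\ \log|z^{\pmb{e}}|^{2} - (l/l_0)\log\textstyle\sum_i|\phi_i|^{2}\ \le\ (l/l_0)\sum_i\lambda_i\log c_i^{2}.
\]
Now Proposition~\ref{prop:cal:inner:product} together with the sup-norm expression $\|z^{l_0\pmb{x}_i}\|^{2}_{l_0 g_{\pmb{a}'}} = \exp(-l_0\varphi_{\pmb{a}'}(\widetilde{\pmb{x}}_i))$ (part~(1) of Proposition~\ref{prop:cal:inner:product}) yields $c_i^{2}\le \|z^{l_0\pmb{x}_i}\|^{2}_{l_0 g_{\pmb{a}'}} = \exp(-l_0\varphi_{\pmb{a}'}(\widetilde{\pmb{x}}_i))$ (the $L^{2}$-norm is bounded by the sup-norm because $\int\Phi_{\pmb{a}'}=1$). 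Substituting gives the bound $\log|\mu^{*}(z^{\pmb{e}})|^{2}_{lg_P}\le -l\sum_i\lambda_i\varphi_{\pmb{a}'}(\widetilde{\pmb{x}}_i)$, valid for \emph{every} admissible decomposition $\pmb{e}/l = \sum_i\lambda_i\pmb{x}_i$; taking the infimum of the right-hand side over such decompositions exactly yields $-l\,\phi_{(\pmb{x}_1,\varphi_{\pmb{a}'}(\widetilde{\pmb{x}}_1)),\ldots,(\pmb{x}_r,\varphi_{\pmb{a}'}(\widetilde{\pmb{x}}_r))}(\pmb{e}/l)$ by definition of $\phi$ (a max of linear combinations equals a max over decompositions), completing (iv). The membership $\mu^{*}(z^{\pmb{e}})\in H^{0}(lP)$ is checked divisor-theoretically: $lP+\mu^{*}((z^{\pmb{e}}))\ge 0$ reduces, via the same convex combination $\pmb{e}/l=\sum\lambda_i\pmb{x}_i$, to $\mult_{\gamma}\mu^{*}(lH_0 + (z^{\pmb{e}}))\ge (l/l_0)\mult_{\gamma}F$ for every prime $\gamma$, which holds because $\mult_{\gamma}F = \min_i\mult_{\gamma}\mu^{*}(l_0H_0 + (z^{l_0\pmb{x}_i}))$ and the convex combination is $\ge$ the minimum.

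The main obstacle is the nefness part of (ii); the PSH/smoothness assertions are formal consequences of the Fubini–Study identification of $g_Q$, but arithmetic nefness requires nonnegative arithmetic degree on every horizontal $1$-cycle, and the cleanest route is via the quotient structure from the Euclidean lattice $V$ (so one must verify that the bound~(c) provides enough room to absorb the $\delta$ and conclude nefness rather than merely pseudo-effectivity), relying on the standard fact that quotients of nef hermitian bundles are nef.
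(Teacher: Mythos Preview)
Your proofs of (i), (iii), and (iv) are correct and essentially match the paper's. For (iv) the paper packages the AM--GM step as the inequality $|s|^2_g/\dist(\langle S\rangle_\CC;g)\le \langle s,s\rangle_g$ (from \cite[Proposition~3.2.1]{MoArZariski}), then bounds $\langle z^{l_0\pmb{x}_i},z^{l_0\pmb{x}_i}\rangle_{l_0 g_{\pmb{a}'}}\le \Vert z^{l_0\pmb{x}_i}\Vert^2_{l_0 g_{\pmb{a}'}}=\exp(-l_0\varphi_{\pmb{a}'}(\widetilde{\pmb{x}}_i))$; your explicit weighted AM--GM on $\sum_i|\phi_i|^2$ is the same computation unpacked.

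The one genuine gap is in your treatment of nefness in (ii). You flag it as the main obstacle and propose to use ``quotients of nef hermitian bundles are nef,'' suggesting that condition~(c) supplies the needed positivity. This is the wrong condition: (c) controls $\dist$ from above and is used only for $g_F\ge 0$. What makes $\overline{Q}$ nef is condition~(a), namely $\Theta\subseteq\Theta_{\pmb{a}'}$, which guarantees $\varphi_{\pmb{a}'}(\widetilde{\pmb{x}}_i)\ge 0$ and hence $\Vert z^{l_0\pmb{x}_i}\Vert_{l_0 g_{\pmb{a}'}}\le 1$, i.e.\ $z^{l_0\pmb{x}_i}\in\aH(\PP^n_\ZZ,l_0\overline{D}_{\pmb{a}'})$. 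The paper isolates this as Lemma~\ref{lem:nef:div}: from $|s|^2_g\le\langle s,s\rangle_g\cdot\dist\le\dist$ for each $s\in S\subseteq\aH(l_0\overline{D}_{\pmb{a}'})$ one gets $\mu^*(s)\in\aH(Y,\overline{Q})$, and since the $\mu^*(z^{l_0\pmb{x}_i})$ have empty common base locus by construction of $Y$, every $1$-dimensional closed integral subscheme misses the support of some $\mu^*(z^{l_0\pmb{x}_i})$, giving $\adeg(\rest{\overline{Q}}{C})\ge 0$. Your quotient-bundle route can also be made to work (the trivial bundle $(V,\langle\cdot,\cdot\rangle_{l_0 g_{\pmb{a}'}})$ is a direct sum of hermitian line bundles with $c_i^2\le 1$, hence nef), but again it is (a), not (c), that furnishes $c_i^2\le 1$.
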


\begin{proof}
(i) is obvious. (ii) is a consequence of Lemma~\ref{lem:nef:div} below.
The first assertion of (iii) follows from (i) and (ii), and
the second
follows from (c). 

(iv) Let us consider arbitrary $\lambda_1, \ldots, \lambda_r \in \RR$ such that
$\pmb{e}/l = \lambda_1 \pmb{x}_1 + \cdots + \lambda_r \pmb{x}_r$ and
$\lambda_1 + \cdots + \lambda_r = 1$. Then, since $Q+ (\mu^*(z^{l_0 \pmb{x}_i}))  \geq 0$ for all $i$,
\begin{align*}
lP + (\mu^*(z^{\pmb{e}})) & = (l/l_0) Q + \sum_{i=1}^r \lambda_i (l/l_0) (\mu^*(z^{l_0 \pmb{x}_i})) \\
& = 
\sum_{i=1}^r \lambda_i (l/l_0) \left( Q + (\mu^*(z^{l_0 \pmb{x}_i})) \right) \geq 0,
\end{align*}
and hence $\mu^*(z^{\pmb{e}}) \in H^0(lP)$.
Moreover,  by using \cite[Proposition~3.2.1]{MoArZariski} and Proposition~\ref{prop:cal:inner:product},
\begin{align*}
\vert \mu^*(z^{\pmb{e}}) \vert^2_{lg_P} & = \vert \mu^*(z^{\pmb{e}}) \vert^2 \exp(-(l/l_0)g_Q) \\
& = \prod_{i=1}^r 
\left( \vert \mu^*(z^{l_0 \pmb{x}_i}) \vert^2\right)^{\lambda_i(l/l_0)} 
\frac{\exp(-l \mu^* (g_{\pmb{a}'}))}{\mu^* (\dist(V \otimes \CC; l_0 g_{\pmb{a}'}))^{l/l_0}} \\
& = \prod_{i=1}^r \mu^* \left( \frac{\vert z^{l_0 \pmb{x}_i} \vert^2_{l_0g_{\pmb{a}'}}}{\dist(V \otimes \CC; l_0 g_{\pmb{a}'})} \right)^{\lambda_i(l/l_0)} \leq  \prod_{i=1}^r \left( \Vert z^{l_0 \pmb{x}_i} \Vert^2_{l_0g_{\pmb{a}'}} \right)^{\lambda_i(l/l_0)} \\
& =
 \prod_{i=1}^r \exp(-l_0 \varphi_{\pmb{a}'}(\widetilde{\pmb{x}}_i) )^{\lambda_i(l/l_0)} = \exp\left( -l \sum_{i=1}^r \lambda_i \varphi_{\pmb{a}'}(\widetilde{\pmb{x}}_i) \right).
\end{align*}
Thus (iv) follows.
\end{proof}

\begin{Lemma}
\label{lem:nef:div}
Let $\mu : Y \to X$ be a birational morphism of projective, generically smooth and normal arithmetic varieties.
Let $\overline{D}$ be an arithmetic $\RR$-divisor of $C^0$-type on $X$ and $S$ a subset of $\aH(X, \overline{D})$.
We assume that there is an effective $\RR$-divisor $E$ on $Y$ with the following properties:
\begin{enumerate}
\renewcommand{\labelenumi}{(\arabic{enumi})}
\item $\mu^*(D) - E \in \Div(Y)$, that is, $\mu^*(D) - E$ is a Cartier divisor.

\item $\mu^*(s) \in H^0(Y, \mu^*(D) - E)$ for all $s \in S$ and
\[
\bigcap_{s \in S} \Supp(\mu^*(D) - E + (\mu^*(s))) = \emptyset.
\]

\end{enumerate}
We set 
\[
M := \mu^*(D) - E\quad\text{and}\quad
g_M := \mu^*(g + \log \dist(\langle S \rangle_{\CC}; g)).
\] 
Then $g_M$ is an $M$-Green function of $(C^{\infty} \cap \Tpsh)$-type and
$(M, g_M)$ is nef.
\end{Lemma}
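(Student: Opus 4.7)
The plan is to recognise $(M,g_M)$ as the line bundle with metric arising from the global generation of $\OO_Y(M)$ by the sections associated to $S$, and then deduce both claims from standard properties of this ``Fubini--Study'' construction. Explicitly, for each $s\in S$, condition (2) lets us write $\mu^{*}(s)=t_s\cdot s_E$ with $t_s\in H^{0}(Y,M)$, where $s_E$ denotes the canonical section of $\OO_Y(E)$; the condition $\bigcap_{s\in S}\Supp(M+(t_s))=\emptyset$ says precisely that $\{t_s\}_{s\in S}$ globally generates $\OO_Y(M)$. Setting $V:=\langle S\rangle_{\CC}$, the key analytic observation is that $\dist(V;g)$ vanishes along the base locus of $V$ on $X$ with exactly the multiplicity that, after pull-back by $\mu$, matches the divisor $E$. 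Consequently $g_E:=-\log\mu^{*}\dist(V;g)$ is an $E$-Green function of $C^{\infty}$-type whose associated metric satisfies $|s_E|^{2}_{g_E}=\mu^{*}\dist(V;g)$, and the decomposition $g_M+g_E=\mu^{*}g$ exhibits $g_M$ as the difference of a $\mu^{*}D$-Green function and an $E$-Green function, both of $C^{\infty}$-type; hence $g_M$ is an $M$-Green function of $C^{\infty}$-type.

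Plurisubharmonicity is then immediate. In a local trivialisation of $\OO_X(D)$ under which an $L^{2}$-orthonormal basis $s_1,\ldots,s_r$ of $V$ corresponds to holomorphic functions $\tilde s_1,\ldots,\tilde s_r$, one computes
\[
g+\log\dist(V;g)=-\log|f_D|^{2}+\log\Bigl(\sum_{i=1}^{r}|\tilde s_i|^{2}\Bigr),
\]
and $\log\sum_i|\tilde s_i|^{2}$ is PSH as the logarithm of a sum of squared moduli of holomorphic functions. Pulling back by $\mu$ preserves plurisubharmonicity, so $g_M$ is of $(C^{\infty}\cap\Tpsh)$-type.

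For nefness, I must verify $\adeg((M,g_M)|_\Gamma)\geq 0$ on every horizontal integral $1$-dimensional subscheme $\Gamma\subset Y$. Global generation yields some $s_0\in S$ with $t_{s_0}|_\Gamma\neq 0$. The decisive estimate is that $t_{s_0}$ is a small section of $(M,g_M)$: from the factorisation
\[
|\mu^{*}s_0|^{2}_{\mu^{*}g}=|t_{s_0}|^{2}_{g_M}\cdot|s_E|^{2}_{g_E}=|t_{s_0}|^{2}_{g_M}\cdot\mu^{*}\dist(V;g)
\]
and the pointwise bound $\dist(V;g)\geq |s_0|^{2}_{g}/\Vert s_0\Vert_{g}^{2}\geq |s_0|^{2}_{g}$ (valid because $s_0\in\aH(X,\overline{D})$ gives $\Vert s_0\Vert_{g}\leq 1$), one deduces $|t_{s_0}|^{2}_{g_M}\leq 1$ off $\mu^{-1}(\Bs(V))$ and hence everywhere on $Y(\CC)$ by continuity. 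The arithmetic degree of $(M,g_M)|_\Gamma$ therefore splits as a non-negative finite contribution, coming from the effective divisor $M|_\Gamma+(t_{s_0}|_\Gamma)$, plus a non-negative Archimedean contribution $\tfrac{1}{2}\sum_{\sigma}\bigl(-\log|t_{s_0}(\sigma)|^{2}_{g_M}\bigr)$.

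The main obstacle is the local analytic verification that $\dist(V;g)$ vanishes along $\Bs(V)$ with exactly the multiplicity needed for $-\log\mu^{*}\dist(V;g)$ to be an $E$-Green function of $C^{\infty}$-type with the clean product formula $|s_E|^{2}_{g_E}=\mu^{*}\dist(V;g)$. Once this identification is in place, the smoothness, plurisubharmonicity, and small-section bound all follow by the direct computations sketched above.
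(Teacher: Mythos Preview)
Your overall strategy coincides with the paper's---recognise $g_M=\mu^*(g+\log\dist(V;g))=\log\sum_j|\mu^*(e_j)|^2$ for an $L^2$-orthonormal basis $e_1,\dots,e_N$ of $V$, and then show this is a nef $M$-Green function of $(C^\infty\cap\Tpsh)$-type---but your argument for the $C^\infty$ regularity contains a genuine error. You claim that both $\mu^*g$ and $g_E=-\log\mu^*\dist(V;g)$ are of $C^\infty$-type, and deduce the same for their difference. However, the hypothesis on $\overline D$ is only $C^0$-type: writing $g=-\log|f_D|^2+u$ locally, $u$ is merely continuous, and the same $u$ reappears in $g_E=\mu^*u-\log\sum_j|\mu^*(e_j)\mu^*(f_D)|^2$. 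So neither piece is $C^\infty$ on its own. The point is that the two copies of $u$ \emph{cancel} in $g_M$, leaving $g_M=\log\sum_j|\mu^*(e_j)|^2$, which no longer involves $g$ at all. The paper makes this cancellation explicit by working directly on $Y$: choosing a local equation $f$ of the Cartier divisor $M$ near a point $y\in Y(\CC)$ and setting $s_j=\mu^*(e_j)\cdot f$, one has $g_M=-\log|f|^2+\log\sum_j|s_j|^2$; condition~(2) guarantees the $s_j$ are holomorphic with no common zero, so the second term is $C^\infty$ and plurisubharmonic. Your local formula in the second paragraph is the right computation, but carried out on $X$ with a local equation $f_D$ of $D$ it only exhibits the correct singularity along $\mu^*(D)$, not along $M$; you still need to pass to a local equation of $M$ on $Y$ to finish.

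Two smaller points. First, speaking of the ``canonical section $s_E$'' is awkward because $E$ is only an effective $\RR$-divisor (it is $\mu^*(D)-M$ with $D$ an $\RR$-divisor), so $\OO_Y(E)$ is not a line bundle; your $t_s$ is nothing other than $\mu^*(s)$ itself, regarded as an element of $H^0(Y,M)$, and it is cleaner to say so. Second, for nefness you must check $\adeg\bigl(\rest{(M,g_M)}{C}\bigr)\geq 0$ for \emph{every} one-dimensional closed integral subscheme $C$, vertical fibres included---the paper does this. Your small-section estimate $|\mu^*(s)|_{g_M}\leq 1$ (which is the same as the paper's, via $|s|_g^2\leq\langle s,s\rangle_g\cdot\dist(V;g)\leq\dist(V;g)$) handles all curves at once: pick $s\in S$ with $C\not\subseteq\Supp(M+(\mu^*(s)))$ and conclude.
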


\begin{proof}
Let $e_1, \ldots, e_N$ be an orthonormal basis of $\langle S \rangle_{\CC}$ with respect to
$\langle\ ,\ \rangle_{g}$.
We fix $y \in Y(\CC)$.
Let $f$ be a local equation of $\mu^*(D) - E$ around $y$.
We set $s_j = \mu^*(e_j)f$ for $j=1, \ldots, N$.
Then $s_1, \ldots, s_N$ are holomorphic around $y$ and $s_j(y) \not= 0$ for some $j$.
On the other hand,
\[
g_M = \log \left( \sum_{j=1}^N | \mu^*(e_j) |^2 \right) = 
-\log | f |^2 + \log \left( \sum_{j=1}^N \left| s_j \right|^2 \right)
\]
around $y$. Thus $g_M$ is an $M$-Green function of $(C^{\infty} \cap \Tpsh)$-type.
By virtue of \cite[Proposition~3.1]{MoArZariski}, we have
\[
| s |^2_{g} \leq \langle s, s \rangle_{g} \dist(\langle S \rangle_{\CC}; g) \leq \dist(\langle S \rangle_{\CC}; g),
\]
which yields $\mu^*(s) \in \aH(Y, \overline{M})$ for all $s \in S$.
Let $C$ be a $1$-dimensional closed integral subscheme on $Y$.
Then there is $s \in S$ such that $C \not\subseteq \Supp(M + (\mu^*(s)))$. Thus $\adeg(\rest{(M, g_M)}{C}) \geq 0$.
\end{proof}

\bigskip
Finally let us see that $\avol(\overline{P}) > \avol(\overline{D}_{\pmb{a}}) - \epsilon$.
We fix an $F_{\infty}$-invariant  volume form $\Phi$ on $Y$ with $\int_{Y(\CC)} \Phi = 1$.
Using $\Phi$ and $lg_P$, we can give the inner product $\langle\ ,\ \rangle_{lg_P}$ on $H^0(lP)$.
Then, by (iv) in the above claim,
\[
\langle \mu^*(z^{\pmb{e}}),  \mu^*(z^{\pmb{e}}) \rangle_{lg_P} \leq \exp\left(-l \phi_{(\pmb{x}_1, \varphi_{\pmb{a}'}(\widetilde{\pmb{x}}_1)), \ldots, (\pmb{x}_r, \varphi_{\pmb{a}'}(\widetilde{\pmb{x}}_r))}(\pmb{e}/l)\right).
\]
Here we consider positive definite symmetric real matrices $A_l = (a_{\pmb{e}, \pmb{e}'})_{\pmb{e}, \pmb{e}' \in l\Theta \cap \ZZ^n}$ and
$A'_l = (a'_{\pmb{e}, \pmb{e}'})_{\pmb{e}, \pmb{e}' \in l\Theta \cap \ZZ^n}$ given by
\begin{align*}
a_{\pmb{e}, \pmb{e}'} & = \langle \mu^*(z^{\pmb{e}}),  \mu^*(z^{\pmb{e}'}) \rangle_{lg_P} \\
\intertext{and}
a'_{\pmb{e}, \pmb{e}'} & = \begin{cases}
\exp\left(-l \phi_{(\pmb{x}_1, \varphi_{\pmb{a}'}(\widetilde{\pmb{x}}_1)), \ldots, (\pmb{x}_r, \varphi_{\pmb{a}'}(\widetilde{\pmb{x}}_r))}(\pmb{e}/l)\right) & \text{if $\pmb{e} = \pmb{e}'$}, \\
\langle \mu^*(z^{\pmb{e}}),  \mu^*(z^{\pmb{e}'}) \rangle_{lg_P} & \text{if $\pmb{e} \not= \pmb{e}'$}.
\end{cases}
\end{align*}
Then, since
\[
\sum_{\pmb{e}, \pmb{e}' \in l\Theta \cap \ZZ^n} a_{\pmb{e}, \pmb{e}'} x_{\pmb{e}} x_{\pmb{e}'} \leq
\sum_{\pmb{e}, \pmb{e}' \in l\Theta \cap \ZZ^n} a'_{\pmb{e}, \pmb{e}'} x_{\pmb{e}} x_{\pmb{e}'},
\]
we have
\begin{align*}
\# \aH_{L^2}(l\overline{P}) & \geq  \# \left\{ (x_{\pmb{e}}) \in \ZZ^{l\Theta \cap \ZZ^n} \ \left|\  \sum\nolimits_{\pmb{e},\pmb{e}' \in l \Theta \cap \ZZ^n} a_{\pmb{e},\pmb{e}'}x_{\pmb{e}}x_{\pmb{e}'} \leq 1 \right\}\right. \\
& \geq
 \# \left\{ (x_{\pmb{e}}) \in \ZZ^{l\Theta \cap \ZZ^n} \ \left|\  \sum\nolimits_{\pmb{e},\pmb{e}' \in l \Theta \cap \ZZ^n} a'_{\pmb{e},\pmb{e}'}x_{\pmb{e}}x_{\pmb{e}'} \leq 1 \right\}\right. .
\end{align*}
On the other hand, by Lemma~\ref{lem:integral:formula:lattice:points},
\begin{multline*}
\liminf_{l\to\infty} \frac{\log  \# \left\{ (x_{\pmb{e}}) \in \ZZ^{l\Theta \cap \ZZ^n} \ \left|\  \sum_{\pmb{e},\pmb{e}' \in l \Theta \cap \ZZ^n} a'_{\pmb{e},\pmb{e}'}x_{\pmb{e}}x_{\pmb{e}'} \leq 1 \right\}\right.}{l^{n+1}/(n+1)!} \\
\geq \frac{(n+1)!}{2} \int_\Theta  \phi_{(\pmb{x}_1, \varphi_{\pmb{a}'}(\widetilde{\pmb{x}}_1)), \ldots, (\pmb{x}_r, \varphi_{\pmb{a}'}(\widetilde{\pmb{x}}_r))}(\pmb{x}) d\pmb{x},
\end{multline*}
and hence $\avol(\overline{P}) > \avol(\overline{D}_{\pmb{a}}) - \epsilon$ by Lemma~\ref{lem:vol:L:2} and (b).

\bigskip

\end{document}